\theoremstyle{plain}
\newtheorem{thm}{Theorem}
\newtheorem{prop}[thm]{Proposition}
\newtheorem{lem}[thm]{Lemma}
\newtheorem{lemconj}[thm]{Lemma/conjecture}
\newtheorem{cor}[thm]{Corollary}
\theoremstyle{definition}
\newtheorem{defi}[thm]{Definition}
\newtheorem{exple}[thm]{Example}
\newtheorem{fundamentalexple}[thm]{Fundamental example}
\theoremstyle{remark}
\newtheorem{rem}[thm]{Remark}
\newcommand{\Z}{\mathbb{Z}}
\newcommand{\C}{\mathbb{C}}
\newcommand{\R}{\mathbb{R}}
\newcommand{\F}{\mathbb{F}}
\newcommand{\A}{\mathbb{A}}
\newcommand{\cpo}{\mathbb{P}^1}
\newcommand{\cpd}{\mathbb{P}^2}
\newcommand{\sld}{{\rm SL}(2,\C)}
\newcommand{\psld}{{\rm PSL}(2,\C)}
\newcommand{\geo}{\mbox{Geo}}
\newcommand{\al}{a(\lambda)} 
\newcommand{\am}{a(\mu)} 
\newcommand{\Aut}{\rm{Aut}}
\newcommand{\T}{\mathcal{T}}
\newcommand{\V}{\mathcal{V}}
\newcommand{\lon}{{\rm lg}}
\newcommand{\dist}{{\rm dist}}
\newcommand{\id}{{\rm id}}
\newcommand{\path}{{\mathcal P}}
\newcommand{\dis}{\displaystyle}
\newcommand{\per}{{\mathfrak P} {\mathfrak e} {\mathfrak r}}
\newcommand{\mygraph}[1]{\xybox{\xygraph{#1}}}
\newcommand{\size}{1.2cm}
\newcommand{\dessinUn}{
\mygraph{ !{<0cm,0cm>;<\size,0cm>:<0cm,\size>::}
!{(-2,0)}*{\bullet}="deb" !{(0,0)}*{\bullet}="mil" !{(2,0)}*{\bullet}="fin"
!{(-1,0)}*{\circ}="S1"  !{(1,0)}*{\circ}="S2"
!{(-1.5,-1)}*{\bullet}="B1"  !{(1.5,-1)}*{\bullet}="B2"
"deb"-|*@{>}^{\phi(e)}"S1" "S1"-|*@{>}^{\phi(e')}"mil" 
"mil"-|*@{>}^{\phi(f)}"S2" "S2"-|*@{>}^{\phi(f')}"fin" 
"B1"-|*@{>}_{\phi(g)}"S1" "S2"-|*@{>}_{\phi(h)}"B2" 
}}
\newcommand{\dessinDeux}{
\mygraph{ !{<0cm,0cm>;<\size,0cm>:<0cm,\size>::}
!{(-2,0)}*{\bullet}="deb" !{(0,0)}*{\bullet}="mil" !{(2,0)}*{\bullet}="fin"
!{(-1,0)}*{\circ}="S1"  !{(1,0)}*{\circ}="S2"
!{(-1.5,-1)}*{\bullet}="B1"  !{(1.5,-1)}*{\bullet}="B2"
"deb"-|*@{>}^{\phi(ee')}"S1" "S1"-|*@{>}^{1}"mil" 
"mil"-|*@{>}^{1}"S2" "S2"-|*@{>}^{\phi(ff')}"fin" 
"B1"-|*@{>}_{\phi(ge')}"S1" "S2"-|*@{>}_{\phi(fh)}"B2" 
}}
\newcommand{\dessinTrois}{
\mygraph{ !{<0cm,0cm>;<\size,0cm>:<0cm,\size>::}
!{(-1,0)}*{\bullet}="deb" !{(0,-1)}*{\bullet}="mil" !{(1,0)}*{\bullet}="fin"
!{(0,0)}*{\circ}="S"  
!{(-.7,-.9)}*{\bullet}="B1"  !{(.7,-.9)}*{\bullet}="B2"
"deb"-|*@{>}^{\phi(ee')}"S" "S"-^>(.7){1}"mil"  
"S"-|*@{>}^{\phi(ff')}"fin" 
"B1"-|*@{>}^>(.3){\phi(ge')}"S" "S"-|*@{>}^>(.8){\phi(fh)}"B2" 
}}
\newcommand{\dessinCasUn}{
\mygraph{
!{<0cm,0cm>;<\size,0cm>:<0cm,\size>::}
!{(-3,-1)}*+++{\dessinUn}="deb"
!{(0,+2)}*+++{\dessinDeux}="mil"
!{(+3,-1)}*+++{\dessinTrois}="fin"
"deb"-^{\rm relabel}@{->}@/^3cm/"mil" 
"mil"-^{\rm identify}@{->}@/^3cm/"fin"
}}
\newcommand{\dessinQuatre}{
\mygraph{ !{<0cm,0cm>;<\size,0cm>:<0cm,\size>::}
!{(-2,0)}*{\bullet}="deb" !{(0,0)}*{\bullet}="mil" !{(2,0)}*{\bullet}="fin"
!{(-1,0)}*{\circ}="S1"  !{(1,0)}*{\circ}="S2"
!{(+3,0)}*{\circ}="S3"  !{(1.5,-1)}*{\bullet}="B2"
"deb"-_<u|*@{>}^{\phi(e)}"S1" "S1"-|*@{>}^{\phi(e')}"mil" 
"mil"-_<v|*@{>}^{\phi(f)}"S2" "S2"-|*@{>}^{\phi(f')}"fin" 
"fin"-_<w|*@{>}^{\phi(g)}"S3" "S2"-|*@{>}_{\phi(h)}"B2" 
}}
\newcommand{\dessinCinq}{
\mygraph{ !{<0cm,0cm>;<\size,0cm>:<0cm,\size>::}
!{(-2,0)}*{\bullet}="deb" !{(0,0)}*{\bullet}="mil" !{(2,0)}*{\bullet}="fin"
!{(-1,0)}*{\circ}="S1"  !{(1,0)}*{\circ}="S2"
!{(+3,0)}*{\circ}="S3"  !{(1.5,-1)}*{\bullet}="B2"
"deb"-_<u|*@{>}^{\phi(e)}"S1" "S1"-|*@{>}^{\phi(e')}"mil" 
"mil"-_<v|*@{>}^{\phi(e')^{-1}}"S2" "S2"-|*@{>}^{\phi(e)^{-1}}"fin" 
"fin"-_<w|*@{>}^{s\phi(g)}"S3" "S2"-|*@{>}_{\phi(e'fh)}"B2" 
}}
\newcommand{\dessinSix}{
\mygraph{ !{<0cm,0cm>;<\size,0cm>:<0cm,\size>::}
!{(0,0)}*{\bullet}="deb" !{(0,-2)}*{\bullet}="mil"
!{(0,-1)}*{\circ}="S" 
!{(+1,0)}*{\circ}="S3"  !{(1,-1)}*{\bullet}="B2"
"deb"-_<{u = w}|*@{>}^{\phi(e)}"S" "S"-_>v|*@{>}^>(.6){\phi(e')}"mil"  
"deb"-|*@{>}^{s\phi(g)}"S3" "S"-|*@{>}_{\phi(e'fh)}"B2" 
}}
\newcommand{\dessinCasDeux}{
\mygraph{
!{<0cm,0cm>;<\size,0cm>:<0cm,\size>::}
!{(-3,-1)}*+++{\dessinQuatre}="deb"
!{(0,+2)}*+++{\dessinCinq}="mil"
!{(+3,-1)}*+++{\dessinSix}="fin"
"deb"-^{\rm relabel}@{->}@/^3cm/"mil" 
"mil"-^{\rm identify}@{->}@/^3cm/"fin"
}}
\begin{document}

\title[Normal subgroup generated by an automorphism]
{Normal subgroup generated by a plane polynomial automorphism}

\author{Jean-Philippe Furter}
\address{Laboratoire MIA \\ Universit\'e de La Rochelle \\
Avenue M. Cr\'epeau\\ 17042 La Rochelle cedex, France}
\email{jpfurter@univ-lr.fr}
\author{St\'ephane Lamy}
\thanks{The second author, on leave from Institut Camille Jordan, Universit\'e Lyon 1, France, was partially supported by an IEF Marie Curie Fellowship.}
\address{Mathematics Institute \\
University of Warwick \\
Coventry CV4 7AL  \\
United Kingdom}
\email{s.lamy@warwick.ac.uk}

\date{March 2010}

\begin{abstract}
We study the normal subgroup $\langle f \rangle_N$ generated by an element $f \neq \id$ in the group $G$
of complex plane polynomial automorphisms having Jacobian determinant $1$.
On one hand if $f$ has length at most $8$
relatively to the classical amalgamated product structure of $G$,
we prove that $\langle f \rangle_N = G$.
On the other hand if $f$ is a sufficiently generic element of even length at least $14$,
we prove that $\langle f \rangle_N \neq G$.
\end{abstract}

\maketitle

\section*{Introduction}

Let Aut$[\C^2]$ denote the group of complex plane polynomial automorphisms
and let $G$ be the subgroup of automorphisms having Jacobian determinant $1$.
In this paper, we deal with normal subgroups of $G$ generated by a single element.

It is easy to check that $G$ is equal to the commutator subgroup of  Aut$[\C^ 2]$
and to its own commutator subgroup as well (see Proposition \ref{prop:comG}).
It is more difficult to decide whether $G$ is a simple group or not.
There  does not seem to exist any natural morphism whose kernel
is a proper normal subgroup of $G$.
However, in a short note published in 1974 that seems to have been quite forgotten,
V. I. Danilov \cite{Da} proves that $G$ is not a simple group.
He uses results from P. Schupp \cite{Sc},
namely the so-called small cancellation theory in the context of an amalgamated product.
To be precise, he shows that the  normal subgroup generated
by the automorphism $(ea)^{13}$ where $a = (y,-x)$ and $e = (x, y+3x^5-5x^4)$
is a strict subgroup of $G$. In fact, he writes $(ea)^{12}$,
because he uses a slightly erroneous definition
of the condition $C'(1/6)$ (see subsection \ref{smallcancellationtheory}).\\

We now introduce the algebraic length of an automorphism in order to state our main result.
The theorem of Jung, Van der Kulk and Nagata  asserts
that  Aut$[\C^2]$ is the amalgamated product over their intersection of the groups $A$ and $E$
of affine and elementary automorphisms (see \ref{generalities}).
Let $f$ be an element of  Aut$[\C^2]$.
If $f$ is not in the amalgamated part $A \cap E$, its algebraic length $|f|$ is defined as
the least integer $m$ such that $f$ can be expressed as a composition $f=g_1 \ldots g_m$,
where each $g_i$ is in some factor ($A$ or $E$) of  Aut$[\C^2]$.
If $f$ is in the amalgamated part, by convention we set $|f| = 0$ (see \cite{Se}, \S 1.3).

The normal subgroup generated by an element $f$ of $G$
will be denoted by $\langle f \rangle_N$.
Of course, $\langle f \rangle_N$ remains unchanged when  replacing $f$ by one of its conjugates in $G$.
So, one can assume  $f$ of minimal algebraic length in its conjugacy class (see subsection \ref{thegroupG} ).
If $|f|\neq 1$, this amounts to saying that $|f|$ is even
(indeed, if $|f|$ is even, it is clear that $f$
is strictly cyclically reduced in the sense of
subsection \ref{smallcancellationtheory} below).
This is for example the case for 
the previous automorphism $(ea)^{13}$ which 
has length 26.\\ 

Here are the two main results of our paper:

\begin{thm} \label{thm1}
If $f \in G$ satisfies $| f | \leq 8$ and $f \neq {\rm id}$,
then  $\langle f \rangle_N = G$.
\end{thm}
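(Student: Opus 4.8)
The plan is to induct on the algebraic length $|f|$. Since $\langle f\rangle_N$ does not change when $f$ is replaced by a conjugate, we may assume $f$ has minimal length in its conjugacy class, so that (by the discussion preceding the statement) $f$ is cyclically reduced and $|f|$ is either $1$ or even; in particular $|f|\in\{0,1,2,4,6,8\}$. Write $G=A'\ast_C E'$ for the amalgamated product decomposition of $G$, with $A'=A\cap G$, $E'=E\cap G$ and $C=A\cap E\cap G$. It is enough to show that $\langle f\rangle_N$ contains a non-trivial element which is ``simpler'' than $f$ — namely one whose algebraic length is either $0$, or $1$, or strictly less than $|f|$ — together with the fact that a non-trivial element of length $0$ already generates $G$ as a normal subgroup.

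That last fact is the first thing I would establish: if $\delta\in C\setminus\{\id\}$ then $\langle\delta\rangle_N=G$. Indeed, conjugating $\delta$ by translations and by elements of $\sld$ and taking products produces a large subgroup of $A'$; conjugating the elements so obtained by elementary maps $(x,y)\mapsto(x,y+P(x))$ and composing, one climbs up in degree and reaches elementary maps of arbitrarily large degree, whence $\langle\delta\rangle_N=G$ by the theorem of Jung, Van der Kulk and Nagata. Next I would treat $|f|=1$, say $f\in E'\setminus C$ (the case $f\in A'\setminus C$ is similar and in fact simpler): for a well-chosen $c\in C$ the commutator $[f,c]=f\,(cf^{-1}c^{-1})$ lies in $\langle f\rangle_N$, belongs to the single factor $E'$ because $c$ and $f$ do, and is non-trivial; reducing its polynomial degree by further commutators with translations yields a non-trivial element of $C$, which reduces this case to the previous one.

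For the inductive step, let $|f|=2n$ with $1\le n\le 4$. Here I would pass to the Bass--Serre tree $\mathcal T$ of the amalgam $G=A'\ast_C E'$, on which $f$ acts hyperbolically along an axis of translation length $2n$. The edge stabilisers met by the axis form a $2n$-periodic sequence of conjugates of $C$; because $C$ has ``codimension one'' in each vertex group and contains all translations, the pointwise stabiliser of a sub-segment of the axis decreases only slowly as the segment lengthens. For $n\le 4$ one can therefore exhibit a non-trivial $u$ fixing a segment long enough — of length greater than $n$ — that the element $[f,u]=f\,(uf^{-1}u^{-1})\in\langle f\rangle_N$ is forced either to be elliptic, hence conjugate into a factor and of length at most $1$, or to be hyperbolic of translation length strictly smaller than $2n$. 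Choosing $u$ outside the centraliser of $f$ keeps $[f,u]\neq\id$, and the induction closes.

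The crux, and the main obstacle, is the construction of $u$ in this last step. Controlling the pointwise stabiliser of a segment of the axis means estimating an intersection of several conjugates of $C$ strung along the axis, and the estimate degrades exactly when the elementary syllables of $f$ have large degree: each such ``deep corner'' drops the dimension by $2$ instead of by $1$. When this happens one cannot simply read off a long enough fixed segment, and must instead rotate $f$ cyclically to bring a low-degree syllable into the favourable position, or, when no such syllable exists, run a more elaborate reduction combining several conjugates of $f$ and exploiting the torsion that survives in the relevant intersection. Pushing this case analysis through — keeping track of the possible shapes of cyclically reduced words of length at most $8$ and of the degrees of their elementary syllables — is where the work lies; it is the accumulation of these dimension constraints that pins the bound at $8$ and that separates this statement from the small-cancellation regime behind the companion theorem on long generic $f$.
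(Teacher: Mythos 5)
Your strategy for lengths $0$, $1$, $2$, $4$ and $6$ is essentially the paper's: explicit commutators dispose of $|f|\le 1$ (and hence of all elliptic elements), and for $f$ hyperbolic one takes a conjugate $\psi$ of the translation $(x+1,y)$ fixing pointwise a path of four edges of $\geo(f)$ --- this is exactly what Lemma \ref{subtreefixedbyatranslation} provides, since the fixed subtree of a translation contains the closed ball of radius $2$ around $\id E$ --- so that the nontrivial element $f\,(\psi f\psi^{-1})^{-1}$ of $\langle f\rangle_N$ is elliptic when $\lon(f)\in\{2,4\}$ and of length at most $4$ when $\lon(f)=6$. Up to there the proposal is sound (the base case $|f|\le 1$ is only sketched, but the sketch is repairable).

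The genuine gap is the case $|f|=8$, which is the only case that makes the theorem nontrivial, and your key intermediate claim there is false as stated. You assert that for $n\le 4$ one can find a nontrivial $u$ fixing a segment of $\geo(f)$ of length greater than $n$; for $n=4$ this means five edges, and in general no such $u$ exists. Indeed, by Lemma \ref{stabilizer} the pointwise stabiliser of a path of type $A$--$E$--$A$ carrying a general colour is $\{(x+\beta y+\gamma,y)\}$; its elements with $\beta\neq 0$ fix nothing larger, and those with $\beta=0$ are translations, whose fixed subtree extends beyond the four-edge ball only in the direction of colours of degree $2$ (Lemma \ref{subtreefixedbyatranslation}). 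So as soon as every elementary syllable of $f$ has degree $\ge 3$ there is no low-degree syllable to ``rotate into position'', your $u$ does not exist, and the naive estimate gives only $\lon\bigl(f\,(ufu^{-1})^{-1}\bigr)\le 2(8-4)=8$, i.e.\ no progress. You correctly flag this as ``where the work lies'', but the work is not done. The paper's resolution is a second induction, on the degree of the first elementary syllable: writing $f=e_1a_1e_2a_2e_3a_3e_4a_4$ with $\deg e_1$ minimal, the conjugate $\tilde e_1=e_1a_1(x+1,y)a_1^{-1}e_1^{-1}$ is \emph{triangular of degree} $\deg e_1-1$ and fixes the ball of radius $2$ centred at $e_1a_1E$; the element $h=(\tilde e_1 f\tilde e_1^{-1})^{-1}f=\tilde e_1a_4^{-1}e_4^{-1}a_3^{-1}\tilde e_3a_3e_4a_4$ then either has length $\le 6$ (when the relevant degrees drop to $1$) or is again of length $8$ but with first syllable of strictly smaller degree, and one iterates until that degree reaches $2$. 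Some such degree-lowering mechanism is what your argument is missing.
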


\begin{thm} \label{thm2}
If $f \in G$ is  a generic element of even length  $|f| \ge 14$,
then the normal subgroup generated by $f$ in Aut$[\C^2]$
(or a fortiori in $G$) is different from $G$.
\end{thm}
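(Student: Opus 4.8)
The plan is to follow Danilov's strategy: realize $\langle f \rangle_N$ inside the kernel of a proper quotient of $\mathrm{Aut}[\C^2]$ by means of small cancellation theory over the amalgamated product $\mathrm{Aut}[\C^2] = A *_{A \cap E} E$. Recall (subsection~\ref{smallcancellationtheory}, after Schupp~\cite{Sc}) that if a symmetrized set $R \subseteq \mathrm{Aut}[\C^2]$ satisfies the condition $C'(1/6)$, then the factors $A$ and $E$ embed into $\mathrm{Aut}[\C^2]/\langle R \rangle_N$; in particular $\langle R \rangle_N$ is proper and $A \cap \langle R \rangle_N = \{\id\}$. Since $A \cap G \cong \sld \ltimes \C^2 \neq \{\id\}$, the latter equality already forces the normal closure of $f$ in $G$ to be a proper subgroup of $G$, which covers the ``a fortiori'' clause. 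So it will be enough to produce, for a generic $f$ of even length $2n \ge 14$, a symmetrized set $R$ with $\langle R \rangle_N = \langle f \rangle_N$ satisfying $C'(1/6)$. The natural choice is the symmetrized closure of $f$: because $2n$ is even, $f$ is strictly cyclically reduced (Introduction), so $R$ consists of the cyclic conjugates of $f^{\pm 1}$, closed under the relevant action of $A \cap E$ described in~\ref{smallcancellationtheory}, and every element of $R$ has algebraic length exactly $2n$. Hence $C'(1/6)$ reduces to the statement that every piece relative to $R$ has length $< \tfrac{2n}{6}$, and since $2n \ge 14 > 12$ it suffices to show: \emph{for generic $f$ of length $2n$, every piece relative to $R$ has length at most $2$.}

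To make ``generic'' precise I would write $f$ in reduced form $f = g_1 g_2 \cdots g_{2n}$, with the syllables $g_i$ alternating between $A \setminus (A \cap E)$ and $E \setminus (A \cap E)$ (all of Jacobian $1$); bounding the degrees of the $g_i$, such tuples form an increasing union of affine varieties. A piece of length $k$ corresponds to a combinatorial overlap: two distinct elements of $R$ --- two cyclically shifted copies of $f$ or of $f^{-1}$, possibly moved by the amalgamating data --- sharing a reduced prefix of $k$ syllables. There are only finitely many combinatorial types of overlap (indexed by two starting positions mod $2n$ and two signs), and for each type the set of tuples admitting an overlap of that type of length $\ge 3$ is cut out by polynomial equations expressing that the relevant subwords of $f^{\pm 1}$ agree modulo $A \cap E$ at the syllable junctions; it is therefore a constructible, and after taking closures a closed, subset of the parameter space --- the key point being to check that it is \emph{proper}. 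I would also discard, as a proper closed condition, the proper powers $f = w^d$ with $d \ge 2$ (which do have long pieces). For $f$ outside this finite union of proper closed sets --- this is what ``generic'' means --- every piece has length $\le 2 < \tfrac{2n}{6}$, so $R$ satisfies $C'(1/6)$, and Schupp's theorem finishes the proof.

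The main obstacle is exactly the claim that a length-$\ge 3$ overlap is a proper condition, and this is the place where the amalgamated subgroup $A \cap E$ has to be handled with care --- the same delicate point behind the subtlety in the definition of $C'(1/6)$ on which Danilov's original note slipped. A length-$k$ overlap between, say, $g_i g_{i+1} \cdots$ and $g_j g_{j+1} \cdots$ (indices cyclic, $i \neq j$, matching factor pattern) forces, by the normal form theorem for amalgamated products, a chain of incidences modulo $A \cap E$ between the syllables of the two copies, and one must separate two phenomena. An incidence of the form ``$g_m$ and $g_{m'}$ lie in the same $(A\cap E)$-double coset'' is automatically satisfied for generic syllables when $g_m, g_{m'} \in A$, since the Borel-type subgroup $A \cap E$ has a dense double coset in $A$ under left--right multiplication (Bruhat decomposition of the linear part), so it imposes nothing; but the very same incidence is a genuine, infinite-codimension, constraint when $g_m, g_{m'} \in E$, because the infinite-dimensional de Jonquières group has no dense $(A\cap E)$-double coset. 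At the two ends of the overlap one is moreover forced into a single-coset incidence ``$g_i \in g_j\,(A\cap E)$'' (or on the right), which always has positive codimension. The accounting I expect to carry out is that an overlap of length $\ge 3$ always entails at least one genuine constraint of one of these kinds relating syllables with distinct indices --- a relation expressing one independent generic syllable through the others and $A \cap E$ --- which fails on a dense open subset of the parameter space. Carrying out this bookkeeping cleanly is the one real computation; once it is in place, Schupp's theorem applies verbatim and $\langle f \rangle_N \neq \mathrm{Aut}[\C^2]$.
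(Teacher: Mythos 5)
Your reduction to the condition $C'(1/6)$ fails at its central claim: it is \emph{not} true that for generic $f$ of even length $2n$ every piece relative to $R(f)$ has length at most $2$, and no genericity assumption on the syllables can make it true. The obstruction comes from translations. A conjugate of $(x+c,y)$ fixes pointwise a ball of radius $2$ around a type-$E$ vertex of the Bass--Serre tree (Lemma \ref{subtreefixedbyatranslation}); placing such a ball on $\geo(f)$ and conjugating $f$ by that elliptic element $\phi$ produces $g=\phi f\phi^{-1}\neq f$ whose geodesic shares a path of $4$ edges, of type $E-A-E-A-E$, with $\geo(f)$ --- see the explicit example $\phi=(x+c,y)$ following Proposition \ref{prop:geo}, which is written down precisely for $f$ satisfying the genericity condition $(C1)$. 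Via the standard dictionary between geodesic overlaps and common prefixes (Lemma \ref{intersectionoftwogeodesics}), this yields pieces of length up to $4$ for \emph{every} hyperbolic $f$; the set of $f$ admitting such an overlap is all of the parameter space, not a proper closed subset, so your ``bookkeeping'' step cannot succeed. Consequently $C'(1/6)$ forces $4<|f|/6$, i.e.\ $|f|\ge 26$ --- which is exactly Danilov's bound with $(ea)^{13}$, and exactly what the theorem is designed to beat.

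The paper's actual route accepts pieces of length up to $4$ and replaces the metric condition $C'(1/6)$ by a combinatorial Gauss--Bonnet argument on the $R(f)$-diagram (Theorem \ref{constructionofM}, Lemma \ref{lem:courbure}, Corollary \ref{disk}): genericity, in the form of conditions $(C1)$ and $(C2)$ on the ``colors'' carried by $\geo(f)$, is used only to prove that two geodesics of conjugates overlap in at most $4$ edges (Proposition \ref{prop:geo}) and that tripod branches have length at most $2$ (Lemma \ref{lem:tripod}). With $|f|\ge 14$ each region then has at least $4$ edges, enough interior valence, and non-positive curvature contribution, contradicting $\sum_D\delta(D)\ge 2$ for a disk diagram. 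Your identification of which double-coset incidences are genuine constraints (on the $E$-syllables, not the $A$-syllables) is in the right spirit --- it is essentially the paper's notion of color --- but it must be deployed to bound overlaps by $4$ and to control tripods, not to eliminate pieces of length $3$, which cannot be done.
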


Here the genericness means
that if we write $f^{\pm 1} = a_1e_1\ldots
a_le_l$, where $l \geq 7$,
$a_1, \ldots,a_l \in A \setminus E$
and 
each $e_i=(x+P_i(y),y)$, then there exists an
integer $D$ such that for any sequence
$d_1,\ldots,d_l$
of integers $\geq D$,  $(P_1,\ldots,P_l)$ can be chosen
generically
(in the sense of algebraic geometry,
i.e. outside a Zariski-closed hypersurface)
in the affine space ${\dis \prod_{1 \,
\leq \, i \, \leq \, l} \C [y]_{\leq \, d_i}}$,
where we have set $\C [y]_{\leq \, d}= \{ P \in \C [y];
\deg P \leq d \}$.

Theorems \ref{thm1} and \ref{thm2} correspond in the
text  below
to Theorems \ref{thm:A} and \ref{thm:B}.
Note that in the latter statements
we use a geometric notion of length
coming from Bass-Serre theory (see subsection \ref{lengths}).
This geometric length allows us to obtain more natural statements.
In fact, Theorem \ref{thm:B} deals with automorphisms
satisfying the special condition $(C2)$
(see Definition  \ref{def:C2}).
The proof that this condition is indeed generic is postponed to the annex.
To convince the reader that such a condition is necessary,
we now give  examples of automorphisms of arbitrary even
length and generating normal subgroups equal to $G$. 

\begin{exple}
Consider the three automorphisms
$$a  =  (y,-x), \quad  e_1  = (x+P(y),y), \quad e_2 =
(x+Q(y),y),$$
where $P$ (resp. $Q$) is an even (resp. odd) polynomial of
degree $\geq 2$,
and  set $f=ae_1(ae_2)^n$, where $n \geq 1$ is an
integer.
If $u= - \id$, we get $au=ua$, $e_2u=ue_2$ and
$e_1u=ue_1^{-1}$,
so that the commutator $[f,u]=fuf^{-1}u^{-1}$ is equal to
$$[f,u]=ae_1(ae_2)^nu
(ae_2)^{-n}(ae_1)^{-1}u^{-1}=ae_1ue_1^{-1}a^{-1}u^{-1}
=ae_1^2a^{-1}.$$
Since $[f,u] \in \langle f \rangle_N$, we get $e_1^2 \in
\langle f \rangle_N$,
so that $\langle f \rangle_N = G$ by  Theorem \ref{thm1} (or
by Lemma \ref{lem:length1} below).
\end{exple}

One motivation for this work is the still open question of
the simplicity of the Cremona group ${\rm Cr}_2$,
i.e. the group of birational transformations of $\C^2$.
For instance in \cite{Gi}
the question is explicitly stated and Gizatullin gives
several criterion
that would prove that ${\rm Cr}_2$ is simple.
Recently Blanc \cite{Blanc} proved that ${\rm Cr}_2$
is simple as an infinite dimensional algebraic group. 
In this respect, we should mention that Shafarevich claimed
that the group ${\rm Aut}_1 [\C^n]$ of automorphisms of
the affine space $\C^n$ having Jacobian determinant $1$
is simple as an infinite dimensional algebraic group
for any $n \geq 2$
(see \cite[Th. 5]{Sha1} and \cite[Th. 5]{Sha2}).
However, it is known that these two papers contain some inaccuracies
(see \cite{Ka1,Ka2}), so the status of this question is not clear to us.

After studying the polynomial case,
our opinion is that ${\rm Cr}_2$, view as an abstract group, could be not simple as well.
Indeed, it is known since Iskovskikh \cite{Is} that ${\rm Cr}_2$
admits a presentation as the quotient
of an amalgamated product by the normal subgroup generated
by a single element.
Take $H_1 =({\rm PGL}(2) \times {\rm PGL}(2)) \rtimes
\Z/2\Z$
the group of birational transformations that extend as
automorphisms of $\cpo \times \cpo$
and take $H_2$ the group of transformations that preserve the
pencil of vertical lines $x = cte$.
Take  $\tau = (y,x) \in H_1 \setminus
H_2$ and $e = (1/x,y/x) \in H_2 \setminus H_1$;
then ${\rm Cr}_2$ is equal to the quotient
$$ \left( H_1 *_{H_1 \cap H_2} H_2 \right)  /_{\langle f
\rangle_N}$$
where $f =  (\tau e)^3$.
To prove that ${\rm Cr}_2$ is not simple it would be
sufficient to find an element $g$
in the amalgamated product of $H_1$ and $H_2$
(that should correspond to a sufficiently general birational transformation)
such that the normal subgroup $\langle f,g \rangle_N$ is proper.
This is  similar to the results we obtain in this paper; but the problem seems harder in the birational setting.

As a final remark on these matters,
we would like to mention a nice reinterpretation of Iskovskikh's result by Wright (see \cite[Th. 3.13]{Wr2}).
Let $H_3 = {\rm PGL}(3)$ be the group of birational transformations that extend as
automorphisms of $\cpd$.
Then Wright proves that the group ${\rm Cr}_2$ is the free product of $H_1,H_2,H_3$ amalgamated
along their pairwise intersection in ${\rm Cr}_2$. \\

In this paper we chose to work over the field $\C$ of
complex numbers, even if most of the results could be
adapted to any base field. Note that in the case of a
finite field the nonsimplicity result is almost immediate.
Let $\F_q$ denote  the finite field of $q=p^n$ elements,
where $p$ is prime and $n \geq 1$.
Let $\Aut [\F_q^2]$ be the  group of automorphisms 
of the affine plane $\A^2_{\F_q}= \F_q^2$ and let $\Aut_1
[\F_q^2]$
be the normal subgroup of automorphisms having Jacobian
determinant $1$.
If $X$ is a finite set, let $\per (X)$ (resp. $\per ^+(X)$)
be the group of permutations (resp.  even permutations) of
$X$.
Since the natural morphism
$\phi :  \Aut [ \F_q^2] \to \per ( \F_q^2)$
induces a non-constant morphism
$\Aut_1[ \F_q^2] \to \per ( \F_q^2)$ (consider the
translations!),
it is clear that $\Aut_1[ \F_q^2]$ is not simple.

\begin{rem}
If $q$ is odd (i.e. the characteristic $p$ of $\F_q$ is
odd),
one can easily check that
$\phi ( \Aut_1[ \F_q^2])= \per ^+ ( \F_q^2)$.
Indeed, $\phi$ is surjective (see \cite{Ma}),
so that $\phi ( \Aut_1[ \F_q^2])$ is a normal subgroup of
$\per ( \F_q^2)$.
However, if the cardinal of $X$ is different from $4$,
it is well known that $\per ^+ (X)$ is the only
nontrivial normal subgroup of $\per  (X)$ (see e.g.
\cite[ex. 3.21, p. 51]{Rot}).
Therefore, it is enough to show that $\phi ( \Aut_1[
\F_q^2]) \subseteq \per ^+ ( \F_q^2)$.
But on one hand $\Aut_1[ \F_q^2]$ is generated by the
elementary automorphisms
$(x+P(y), y)$ and $(x,y +Q(x))$, where $P \in \C[y]$, $Q\in
\C[x]$ are any polynomials.
On the other hand, it is straightforward to check that such
 automorphisms
induce even permutations of $\F_q^2$. \\
\end{rem}

As a final remark, we would like to stress the importance
of translations in getting our results. Let  $\Aut^0[ \C^2]$
be the group of automorphisms fixing the origin
and let $J_n$ be the natural group-morphism associating to
an element of
$\Aut^0[ \C^2]$ its $n$-jet at the origin (for $n \geq 1$).
For $n \geq 2$, the kernel of $J_n$ is a nontrivial normal
subgroup of $G^0= G \cap \Aut^0[ \C^2] $,
so that this latter group is not simple. Of course for $\Aut
[ \C ^2 ]$ the morphism $J_n$ does not exist.
This explains the fact that our paper strongly relies on translations
(see  Lemmas \ref{subtreefixedbyatranslation} and
\ref{stabilizer}).

\begin{rem}
It results from \cite{Anick} that the image of $J_n$ is
exactly
the group of $n$-jets of polynomial endomorphisms fixing the
origin
and whose Jacobian determinant is a non-zero constant.
The precise statement can be found in \cite[Proposition
3.2]{Fu}.\\
\end{rem}

Our paper is organized as follows. 

In section \ref{sec:bass-serre} we gather the results from
Bass-Serre theory that we need:
this includes some basic definitions and facts but also some
quite intricate computations,
such as in the characterization of tripods (subsection
\ref{independentcolors}).
This is also the place where we define precisely the
condition $(C2)$ that we need in Theorem \ref{thm:B}.

Section \ref{sec:proofoftheorem1} is devoted to the proof of
Theorem \ref{thm1}.
This is the most elementary part of the paper.
We only use Lemma \ref{subtreefixedbyatranslation} from
section \ref{sec:bass-serre}.   

In section \ref{Lyn-Sc} we deal with  R-diagrams.
This field of combinatorial group theory has been
introduced 
by Lyndon and Schupp in relation with condition $C'(1/6)$
from small cancellation theory
(see \ref{smallcancellationtheory}).
A noteworthy feature of our work is that we use R-diagrams
in a completely opposite setting (positive curvature).

In section \ref{sec:proofoftheorem3}  we are able to give a
proof of Theorem \ref{thm2},
using the full force of both Bass-Serre and Lyndon-Schupp
theories.

We briefly discuss in section \ref{sec:10-12} the
cases not covered
by Theorems \ref{thm1} and \ref{thm2},
that is to say when the automorphism has length 10 or 12.

Finally, in the annex, we  prove that condition $(C2)$ is generic
and we also give explicit examples of automorphisms satisfying this condition.

\section{The Bass-Serre tree}\label{sec:bass-serre}

\subsection{Generalities}\label{generalities}

The classical theorem of Jung, van der Kulk and Nagata
states that the group $\Aut[\C^2]$
is the amalgamated product of the \textbf{affine} group  
  $$A = \left\lbrace  
  (\alpha x + \beta y + \gamma, \delta x + \epsilon y +
\zeta); \alpha, \ldots, \epsilon \in \C,
 \alpha \epsilon - \beta \delta \neq 0 \right\rbrace  $$
and  the \textbf{elementary} group  
  $$E = \left\lbrace  (\alpha x + P(y), \beta y + \gamma);
\alpha, \beta, \gamma \in \C, \alpha \beta \neq 0, P \in
\C[y] \right\rbrace $$
over their intersection (see \cite{Ju,vdK,Na}).
This is usually written in the following way:

\begin{thm} \label{Jung}
$\Aut[\C^2]= A *_{A \cap E} E$.
\end{thm}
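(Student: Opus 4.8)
The plan is to prove the two facts packaged in the statement: that $A$ and $E$ together generate $\Aut[\C^2]$, and that the canonical homomorphism $A *_{A\cap E} E \to \Aut[\C^2]$ is injective. Both are governed by a single elementary device, a non-cancellation estimate for leading terms forced by the Jacobian condition. Throughout, for $f=(f_1,f_2)\in\Aut[\C^2]$ I write $\deg f_i$ for the total degree and $\overline{f_i}$ for the leading homogeneous part, using freely that a nonzero homogeneous polynomial in two variables splits into linear forms.

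For the generating statement I would induct on $\deg f_1+\deg f_2$. If $\min(\deg f_1,\deg f_2)\le 1$ then, after composing $f$ on the left with a suitable element of $A$ (a coordinate swap if necessary), we may assume $f_2=y$; the relation $\mathrm{Jac}(f)\in\C^*$ then forces $\partial_x f_1\in\C^*$, i.e. $f_1=\alpha x+P(y)$, so $f\in E$ and we are done. Otherwise set $d:=\deg f_1\ge\deg f_2=:e$ with $e\ge 2$. The key claim is that $e\mid d$ and $\overline{f_1}=\lambda\,\overline{f_2}^{\,d/e}$ for some $\lambda\in\C^*$. To see this, observe that $\mathrm{Jac}(f)=\partial_x f_1\,\partial_y f_2-\partial_y f_1\,\partial_x f_2$ is a nonzero constant, so its homogeneous component of degree $d+e-2\ge 1$, which equals $\mathrm{Jac}(\overline{f_1},\overline{f_2})$, vanishes identically; hence $\overline{f_1}$ and $\overline{f_2}$ are algebraically dependent, and for homogeneous polynomials in two variables this yields a homogeneous $g$ with $\overline{f_1}=c\,g^{p}$, $\overline{f_2}=c'\,g^{q}$. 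A finer use of invertibility — the classical and slightly delicate point of van der Kulk, see \cite{vdK} — rules out the non-comparable case and gives $e\mid d$, whence $\overline{f_1}$ and $\overline{f_2}^{\,d/e}$ are both scalar multiples of $g^{p}$. Granting the claim, the automorphism $(x-\lambda y^{d/e},\,y)\circ f=(f_1-\lambda f_2^{d/e},\,f_2)$ keeps the second coordinate $f_2$ but has a first coordinate of degree $<d$, hence a strictly smaller degree sum; by induction it lies in the subgroup generated by $A$ and $E$, and therefore so does $f$.

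For the amalgamated-product structure I would invoke the normal form theorem for amalgams (\cite[\S 1.2]{Se}): since $A\cap E$ is literally the intersection of the two factors and they generate, it suffices to check that no strictly alternating word $g_1g_2\cdots g_n$ with $n\ge 2$, the $g_i$ taken alternately from $A\setminus(A\cap E)$ and $E\setminus(A\cap E)$, equals $\id$. Grouping letters, such a word reads $a_0e_1a_1\cdots e_ka_k$ with $k\ge 1$, each $a_i\in A$ and each $e_i\in E$ of $y$-degree $d_i\ge 2$ (this lower bound is exactly what ``$e_i\notin A\cap E$'' means once linear parts are absorbed into the $a_i$). I would then prove by induction on $k$ that $\deg(g_1\cdots g_n)=d_1d_2\cdots d_k$: the inductive step amounts to showing that appending an affine--elementary block to a partial product $v$ multiplies its degree by the new $d_j$, which holds because the leading homogeneous part of such a reduced partial product has a rigid shape (one coordinate is, up to a scalar, a power of a single linear form), so that substituting it into the next elementary map of $y$-degree $d_j\ge 2$ cannot create cancellation. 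Since every $d_i\ge 2$ we get $\deg(g_1\cdots g_n)\ge 2>1=\deg\id$, so the word is not the identity.

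I expect the main obstacle to be precisely this last induction: one cannot merely track degrees but must retain control of the exact leading form of every partial product in order to certify non-cancellation when the next block is composed on, and it is here that the dichotomy between $E\setminus(A\cap E)$ (genuine $y$-degree $\ge 2$) and $A\cap E$ (degree $1$) is indispensable — the same phenomenon that underlies the notion of reduced word throughout the paper. The generation half, by contrast, is a clean induction once the leading-term lemma above is in hand.
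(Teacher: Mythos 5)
The paper does not prove this statement: Theorem \ref{Jung} is the classical Jung--van der Kulk--Nagata theorem, quoted with references to \cite{Ju,vdK,Na} and to \cite{LaJung} for a geometric proof, so there is no in-paper argument to compare yours against. Your outline is the standard algebraic route (degree induction for generation, degree multiplicativity of reduced words for the amalgam structure), and its skeleton is sound; but as a proof it leaves the two genuinely hard steps unproved. First, the ``key claim'' that $e\mid d$ and $\overline{f_1}=\lambda\,\overline{f_2}^{\,d/e}$ \emph{is} Jung's theorem in essence: the vanishing of $\mathrm{Jac}(\overline{f_1},\overline{f_2})$ only yields $\overline{f_1}=c\,g^{p}$, $\overline{f_2}=c'\,g^{q}$ for a common homogeneous $g$, with no control on whether $q\mid p$, and ruling out the incomparable case (say $p=3$, $q=2$) is where all the work of van der Kulk's proof lives; you explicitly defer it to \cite{vdK}, so your argument is a reduction to the literature rather than a proof. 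Second, the multiplicativity $\deg(a_0e_1\cdots e_ka_k)=d_1\cdots d_k$ is likewise only asserted: the inductive step requires carrying along a precise normal form for the leading part of each partial product (one coordinate proportional to a power of a single linear form, the other of strictly smaller degree) to certify non-cancellation, and you name this obstacle without discharging it. A small separate slip: in your base case, making $f_2=y$ requires composing $f$ on the \emph{right} with an affine map (so that $f_2\circ a=y$), not on the left, since $y$ need not be an affine combination of $f_1$ and $f_2$; this is harmless for membership in $\langle A,E\rangle$ but should be stated correctly. In short: a faithful outline of the classical proof, no more complete than the paper's citation, with the decisive lemmas assumed rather than established.
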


A geometric proof of this theorem and many references may be
found in \cite{LaJung}.
Let us also recall that elements of $E$ are often called
\textbf{triangular} automorphisms.\\

The Bass-Serre theory (\cite{Se}) associates
a simplicial tree to any amalgamated product.
In our context, let us denote by $\T$ this tree.
By definition, the vertices of  $\T$ are the disjoint union
of the left cosets modulo $A$ (vertices of  {\sl{type $A$}})
and modulo $E$ (vertices of {\sl{type $E$}}).
The edges of $\T$ are  the left cosets modulo $(A \cap E)$.
Finally, if $\phi \in \Aut[\C^2]$,  the edge $\phi (A \cap
E)$
links the vertices $\phi A$ and $\phi E$.
Since $\Aut[\C^2]$ is generated by $A$ and $E$, $\T$ is
connected.
Thanks to the amalgamated structure, $\T$ contains no loop,
so that it is indeed a tree.\\

The group $\Aut[\C^2]$ acts naturally on $\T$ by left
multiplication:
for any $g, \phi \in \Aut[\C^2]$, we set
$g. \phi A= (g \phi )A$, $g.  \phi E= (g \phi) E$ and $g.
\phi (A \cap E)= (g \phi) (A \cap E)$.
It turns out that this action gives an embedding of
$\Aut[\C^2]$
into the group of simplicial isometries of $\T$ (see
\cite[Remark 3.5]{LaAlg}).
This action is transitive on the set of edges,
on the set of vertices of type $A$ 
and on the set of vertices of type $E$. 
The stabilizer of a vertex $\phi A$
({\sl{resp.}} of a vertex $\phi E,$ {\sl{resp.}} of an edge
$\phi (A\cap E)$) is the group
$\phi A \phi^{-1}$ ({\sl{resp.}} $\phi E\phi^{-1},$
{\sl{resp.}} $\phi (A \cap E) \phi^{-1}$).\\

Following \cite{Wr,LaAlg}, one can define systems of
representatives
of the nontrivial left cosets $A/A\cap E$ and $E/A\cap E$
by taking:
\begin{eqnarray*}
\al &=& (\lambda x+y, -x); \hspace{3mm}  \lambda \in \C \\
e(Q) &=& (x+Q(y),y); \hspace{3mm}   Q(y) \in y^2 \C [y]
\setminus \{ 0 \}.
\end{eqnarray*}
Note that the minus sign in the expression of $\al$ did not
appear in  \cite{Wr,LaAlg}.
We have to introduce it in the present paper
in order to get automorphisms with Jacobian determinant 1
(see subsection \ref{thegroupG}).

Then any element $g \in \Aut[\C^2]$ may be uniquely written
$g=ws$
where $w$ is a product of factors  
of the form $\al$ or  $e(Q)$,
successive factors being of different forms,
and $s \in A \cap E$ (see e.g. \cite[chap. I,
1.2, th. 1]{Se}).
Similarly, any edge
(resp. vertex of type $A$, resp. vertex of type $E$)
may be uniquely written $w(A\cap E)$
(resp $wA$, resp. $wE$) where $w$ is as above.

We call a (directed) \textbf{path} a sequence of consecutive edges in
$\T$.
To denote a path we enumerate its vertices separated by $-$.
For instance the path $\path$ of two edges containing the
vertices $\id A, \id E, eA$,
where $e \in E \setminus A$
will be denoted $\path = \id A - \id E - eA$.
If we are only interested in the type of the vertices,
we say for example that $\path$ is of type $A-E-A$.\\

If two vertices of $\T$ are fixed by an automorphism of
$\Aut [\C^2]$,
then the path relating them is also fixed.
Therefore, the subset of $\T$ fixed by an automorphism is
either empty or a subtree.
Up to  conjugation, this subset has been computed for any
automorphism in
\cite[proof of Proposition 3.3]{LaAlg}.
In particular, it has been computed for the translation
$(x+1,y)$.
The following easy and technical lemma is a slight variation
of this computation.
As in the latter paper, this analogous statement turns out
to be very useful.
The proof is given for the sake of completeness.

\begin{lem} \label{subtreefixedbyatranslation}
The subtree of $\T$ fixed by the translation $(x + c, y)$,
$c \in \C^*$,
is exactly the union of the paths
$$\id E - e(P) A -e(P) \al E- e(P) \al  e(Q) A$$
where $P \in y^2 \C[y]$, $\lambda \in \C$ and  $Q(y)= \alpha
y^2$, $\alpha \in \C^*$.
\vskip1mm
Note that we (exceptionally) allow $P$ to be zero.
In that case, the path should rather be written
$$\id E - \id A - \al E-  \al  e(Q) A.$$
In particular, the fixed subtree
does not depend on $c$,
has diameter $6$
and contains the closed ball of radius 2 centered at $\id
E$,
i.e. the union of the paths
$$\id E - e(P) A -e(P) \al E,
\hspace{3mm} P \in y^2 \C[y],
\hspace{3mm} \lambda \in \C.
$$
\end{lem}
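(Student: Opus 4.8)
The plan is to explicitly determine the fixed subtree by working in the Bass-Serre tree using the normal form for elements of $\Aut[\C^2]$ coming from Theorem \ref{Jung}. Let $t = (x+c,y)$ with $c \in \C^*$. Since $t \in E$, the vertex $\id E$ is fixed, so the fixed subtree is a nonempty subtree containing $\id E$. A vertex $wA$ (with $w$ in reduced form as a product of factors $\al$ and $e(Q)$) is fixed if and only if $t w A = wA$, i.e. $w^{-1} t w \in A$; similarly $wE$ is fixed iff $w^{-1} t w \in E$. Because the fixed set is a subtree containing $\id E$, it suffices to walk outward from $\id E$ along each reduced path and at each step decide whether the next vertex is still fixed. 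So the core of the argument is a finite case analysis on reduced words $w$ of small length, computing the conjugate $w^{-1} t w$ and testing membership in $A$ or $E$.

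First I would handle the vertices of type $A$ adjacent to $\id E$: these are $e(P)A$ for $P \in y^2\C[y]$, together with $\id A$ itself (the case $P=0$). Here $e(P)^{-1} t \, e(P) = (x+c, y)$ conjugated by the shear $(x-P(y),y)$, which is again a shear-type elementary map, hence lies in $A \cap E \subseteq A$; so every such vertex is fixed — this gives the ball of radius $2$ claim once we also do the next step. Next, from $e(P)A$ I would move to a type $E$ vertex $e(P)\al E$ and check that $\al^{-1} e(P)^{-1} t\, e(P)\al$ lies in $E$: a direct computation shows the conjugate is triangular, so these vertices are all fixed. This establishes that the closed ball of radius $2$ about $\id E$ is contained in the fixed subtree. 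The crucial and most delicate step is the next one: from $e(P)\al E$ we move to a type $A$ vertex $e(P)\al e(Q)A$ with $Q \in y^2\C[y]\setminus\{0\}$, and we must compute $(e(P)\al e(Q))^{-1}\, t\, (e(P)\al e(Q))$ and determine for which $Q$ it lies in $A$. I expect the conjugate to have a polynomial part in one variable whose degree is controlled by $\deg Q$, so that membership in $A$ forces $\deg Q \le 2$, i.e. $Q(y) = \alpha y^2$ with $\alpha \in \C^*$; the coefficient conditions should automatically be satisfied for such $Q$, and independently of $c$. Finally I would show the subtree stops there: from $e(P)\al e(Q)A$ with $Q = \alpha y^2$, any further extension to a type $E$ vertex $e(P)\al e(Q)\al' E$ forces $(e(P)\al e(Q)\al')^{-1} t (e(P)\al e(Q)\al') \in E$, and a degree computation shows this fails for every choice of the parameter, so the path cannot be prolonged; likewise one checks there is no fixed vertex hanging off $\id E$ on the $A$-side beyond what we listed (the symmetric direction is shorter because $t\in E$). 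Assembling these steps describes the fixed subtree as exactly the stated union of length-$3$ paths, of diameter $6$, independent of $c$, and containing the asserted radius-$2$ ball.

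The main obstacle will be the bookkeeping in the degree computation at the critical third step: one must conjugate $t$ by a word of length up to three in the generators $\al$ and $e(Q)$ and read off precisely which monomials survive, being careful with the minus sign in $\al = (\lambda x + y, -x)$ that the authors introduced for the Jacobian-$1$ normalization. I would organize this by composing the maps one factor at a time and tracking only the top-degree behavior in the relevant variable, invoking the uniqueness of the normal form to conclude that the conjugate lies in $A$ (resp. $E$) if and only if its reduced length is $0$. Since the excerpt already records (via \cite{LaAlg}) that this fixed set has been computed up to conjugacy, I would also sanity-check my answer against that reference, but the self-contained proof is just the explicit walk through the tree described above.
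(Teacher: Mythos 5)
Your proposal follows essentially the same route as the paper: fix $\id E$, use convexity of the fixed subtree to walk outward through the normal-form vertices, and at each step compute the conjugate $w^{-1}(x+c,y)\,w$ and test membership in $A$ or $E$; the critical condition $\deg Q \le 2$ at the third step and the termination at the fourth are exactly the paper's computations. One small caveat: at the final step the obstruction is not a degree bound but the fact that the conjugate $(x-c,\; 2\alpha c x + y + \mu c - \alpha c^2)$ is affine yet not triangular (the $2\alpha c x$ term survives because $\alpha c \ne 0$), which your compute-and-test method would of course reveal.
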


\begin{proof} 
If $P,Q \in y^2 \C[y]$ and $\lambda \in \C$ we have
\begin{eqnarray*}
(x+c,y) \circ e(P) &=& e(P) \circ (x+c,y);\\
(x+c,y) \circ \al &=& \al \circ (x,y+c);\\
(x,y+c) \circ e(Q)  &=& e(Q) \circ f;
\end{eqnarray*}
where $f=(x+Q(y)-Q(y+c), y+c)$, so that
\begin{eqnarray*}
(x+c, y) e(P) \al e(Q) = e(P) \al e(Q) f.
\end{eqnarray*}
Therefore, the vertex $e(P) \al e(Q) A$ is fixed by
$(x+c,y)$
if and only if $f \in A$, i.e. ${\rm deg}(Q(y)-Q(y+c)) \leq
1$,
i.e. ${\rm deg}(Q) \leq 2$.
If $Q= \alpha y^2$, this vertex is fixed.
Since  the vertex $\id E$ is also (obviously) fixed, this
shows that
the following path is fixed:
\begin{eqnarray*}
\id E - e(P) A -e(P) \al E- e(P) \al  e(Q) A.
\end{eqnarray*}
If $Q= \alpha y^2$, where $\alpha \neq 0$ and $\mu \in \C$,
it remains to show that
the vertex $e(P) \al  e(Q) \am E$ is not fixed.
Indeed, an easy computation shows that
\begin{eqnarray*}
(x+c, y) e(P) \al e(Q) \am = e(P) \al e(Q) \am g,
\end{eqnarray*}
where $g=(x-c, 2 \alpha c x + y + \mu c - \alpha c^2) \notin
E$. 
\end{proof}

\subsection{Algebraic and geometric lengths} \label{lengths}

We will use two notions of \textbf{length} on $\Aut[\C^2]$.

The  {\bf algebraic length} has been defined in the
introduction:
if $g \in \Aut[\C^2]$ is not in the amalgamated part,
$|g|$ is defined as the least integer $m$ such that
$g$ can be expressed as a composition
$g=g_1 \ldots g_m$ where each $g_i$ is in some factor of the
amalgam.
If $g$ is in the amalgamated part, we set $|g|=0$.

The {\bf geometric length} is defined by ${ \lon (g) =
\inf_{v \in { \V} } \dist (g.v,v) }$,
where $\V$ is the set of vertices of $\T$ and
$\dist(.,.)$ is the simplicial distance on $\T$. 

By Lemma \ref{strictlyandweaklycyclicallyreducedelements} we almost always have 
$\lon (g) = \min\{|\phi g \phi^{-1}|; \phi \in {\rm Aut}[\C^2]\}$,
the only exception being when $g$ is conjugate to an elementary automorphism
which is not conjugate to an element in the amalgamated part.

\subsection{Elliptic and hyperbolic elements} \label{ellipticandhyperbolicelements}

Elements $g$ of $\Aut[\C^2]$ may be sorted into two classes
according to their action on  $\T$.

If $\lon (g) = 0$ (i.e. $g$ has at least one fixed point on 
$\T$),
we say that  $g$ is  {\bf elliptic}.
This corresponds to the case where $g$ is conjugate to an
element
belonging to some factor ($A$ or $E$) of $\Aut[\C^2]$.
Since any element of $A$ is conjugate to some element of
$E$,
this amounts to saying that $g$ is triangularizable
(i.e. conjugate to some triangular automorphism).

If $\lon (g) > 0$, we say that $g$ is {\bf hyperbolic}.
This corresponds to the case where $g$ is conjugate to
a composition of generalized H\'enon transformations $h_1
\ldots h_l$ (see \cite{FM}).
We recall that a generalized H\'enon transformation is a map
of the form

\centerline{$h=(y, ax+P(y))= (y,x) \circ (ax+P(y), y)$,}

\noindent where $a \in \C^*$ and $P(y)$ is a polynomial of
degree at least $2$.
Equivalently, $g$ is conjugate to an automorphism of the
form

\centerline{$f=a_1e_1 \ldots a_le_l$,}

\noindent where each $a_i \in A \setminus E$
and each $e_i \in E \setminus A$.

The set of points $v \in \T$  satisfying $\dist(g.v,v)=\lon
(g)$
defines an infinite geodesic of $\T$ denoted by $\geo(g)$.
Furthermore,  $g$ acts on $\geo(g)$ by translation of length
$\lon(g)$.
It is not difficult to check that $\lon(g)= \lon (f) = |f| = 2l$
and that the geodesic of $f$ is composed of the path
${\rm id}A-a_1 E-a_1e_1A- \cdots -a_1e_1\ldots a_le_lA$
and its translated by the $f^k$'s ($k \in \Z$).
If $g= \phi f \phi ^{-1}$ with $\phi \in {\rm Aut}[\C^2]$, we have of course
$\geo(g)= \phi ( \geo(f) )$.\\

The proof of the following easy result is left to the
reader. Note that these two sets of equivalent conditions correspond
to the notions of strictly and weakly cyclically reduced elements given
in subsection \ref{smallcancellationtheory}.

\begin{lem}   \label{strictlyandweaklycyclicallyreducedelements}
Let $g \in \Aut[\C^2]$ be a hyperbolic element.
\begin{enumerate}
\item The following assertions are equivalent:

\centerline{(i) $|g| = \lon(g) $; \hspace{5mm}
(ii) $\geo(g)$ contains the vertices $\id A$ {\bf and}  $\id
E$.}

\vskip1mm

\item The following assertions are equivalent:

\centerline{(iii) $  |g| \leq \lon(g) +1 $; \hspace{5mm}
(iv) $\geo(g)$ contains the vertex $\id A$ {\bf or} $\id
E$.}
\end{enumerate}
\end{lem}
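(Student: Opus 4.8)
The plan is to analyze the geodesic $\geo(g)$ of the hyperbolic element $g$ and keep track of which special vertices $\id A$ and $\id E$ it passes through, translating the combinatorics of reduced words into the geometry of the tree. Recall that $g$ is hyperbolic means $\lon(g) > 0$, and we may write $g = \phi f \phi^{-1}$ with $f = a_1 e_1 \cdots a_l e_l$, each $a_i \in A \setminus E$, each $e_i \in E \setminus A$, so that $\lon(g) = 2l$ and $\geo(f)$ is the bi-infinite geodesic containing the fundamental path $\id A - a_1 E - a_1 e_1 A - \cdots - a_1 e_1 \cdots a_l e_l A$ translated by all powers $f^k$. The endpoints of this fundamental segment are $\id A$ and $f A = g' A$ for the appropriate conjugate; in any case $\geo(g) = \phi(\geo(f))$.

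For part (1), I would prove (ii) $\Rightarrow$ (i) first. If $\geo(g)$ contains both $\id A$ and $\id E$, then since these two vertices are adjacent in $\T$ (they are joined by the edge $\id(A \cap E)$), the edge between them lies on $\geo(g)$. By transitivity of the action and uniqueness of reduced forms, one can conjugate so that the fundamental domain of $g$ on its geodesic starts exactly at this edge; then $g$ is conjugate to an element of the form $a_1 e_1 \cdots a_l e_l$ with the geodesic beginning at $\id A - a_1 E$, which forces $|g| \le 2l = \lon(g)$. Since always $|g| \ge \lon(g)$ (a reduced word of length $m$ moves some vertex by at most... — actually the inequality $\lon(g) \le |g|$ is immediate because $g$ written as a product of $m$ factors moves $\id A$ or $\id E$ by at most $m$), we get $|g| = \lon(g)$. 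Conversely, (i) $\Rightarrow$ (ii): if $|g| = \lon(g) = 2l$, write $g = g_1 \cdots g_{2l}$ in reduced form with factors alternating between $A \setminus E$ and $E \setminus A$ (alternation and even length follow from $g$ being hyperbolic of this length); then the path $\id A - g_1 E - g_1 g_2 A - \cdots$ — or $\id E - g_1 A - \cdots$ depending on which factor $g_1$ lies in — is a geodesic segment of length $2l$ moved onto its $g$-translate, hence lies on $\geo(g)$, and it contains both $\id A$ and $\id E$ among its first three vertices.

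For part (2), the argument is the same flavor but one step weaker. If $|g| \le \lon(g) + 1 = 2l + 1$: since $g$ is hyperbolic, $|g| \ge \lon(g) = 2l$, so $|g| \in \{2l, 2l+1\}$. If $|g| = 2l$ we are in case (1) and $\geo(g)$ contains both $\id A$ and $\id E$; if $|g| = 2l+1$, the reduced word $g = g_1 \cdots g_{2l+1}$ has odd length, so the associated path $v_0 - v_1 - \cdots - v_{2l+1}$ with $v_0 \in \{\id A, \id E\}$ has $v_0$ and $v_{2l+1}$ of the same type, and the realizing geodesic segment for the translation passes through exactly one of $\id A$, $\id E$ (namely $v_0$), giving (iv). Conversely, if $\geo(g)$ contains $\id A$ or $\id E$, say it contains a vertex $v$ of type $A$ (the $E$ case is symmetric, using that any element of $A$ is conjugate to one of $E$), then taking $v$ as a basepoint we can write $g$ as a word tracing the segment from $v$ to $g.v$ of length $2l$, possibly prepended/appended by a single factor to reach a vertex in the $\Aut[\C^2]$-orbit used to identify lengths; this costs at most one extra factor, so $|g| \le 2l + 1 = \lon(g) + 1$.

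The main obstacle I anticipate is bookkeeping the precise correspondence between the algebraic length $|g|$ (minimal number of factors in \emph{any} expression $g = g_1 \cdots g_m$) and the geometric data, because $|g|$ is defined via arbitrary reduced words while the geodesic is pinned to the specific representatives $\id A$, $\id E$; one must be careful that a reduced expression of minimal length genuinely yields a geodesic \emph{path} in $\T$ (no backtracking), which is exactly where the amalgam's normal-form theorem and the fact that successive factors lie in different factors of the amalgam are used. Once that dictionary is set up cleanly — a reduced word of length $m$ starting with a factor in $A \setminus E$ (resp. $E \setminus A$) corresponds to a non-backtracking path of length $m$ from $\id A$ (resp. $\id E$), and $g$ translating along its axis by $2l$ means the axis contains such a path iff $|g|$ achieves the relevant bound — both equivalences drop out by matching parities and counting vertices on the fundamental segment. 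This is precisely the content flagged in the remark that these conditions correspond to being strictly, resp. weakly, cyclically reduced, so I would phrase the final write-up in that language.
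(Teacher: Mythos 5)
The paper leaves this proof to the reader, so there is no official argument to measure against; your dictionary between alternating normal forms and non-backtracking edge paths in $\T$ is certainly a correct route, and both equivalences do follow from it. One step is wrong as written, though easily repaired: in (i)$\Rightarrow$(ii), when $g_1\in A\setminus E$ the segment you exhibit is $\id A - g_1E - g_1g_2A-\cdots$, whose first three vertices are $\id A$, $g_1E$, $g_1g_2A$; it does \emph{not} contain $\id E$, precisely because $g_1\notin E$. To place $\id E$ on the axis you must start from the vertex of the \emph{opposite} type to the factor containing $g_1$: since $|g|=\lon(g)=2l$ forces $g_{2l}\in E\setminus A$ (alternation plus even length), the path $\id E - \id A - g_1E-\cdots - g_1\cdots g_{2l-1}E=g\cdot\id E$ is non-backtracking of length $2l=\lon(g)$, hence lies on $\geo(g)$ and contains both distinguished vertices. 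Likewise, in (ii)$\Rightarrow$(i) the phrase ``one can conjugate so that the fundamental domain starts at this edge'' should be dropped: conjugating replaces $g$ by another element and moves $\id A$, $\id E$. Instead, after possibly replacing $g$ by $g^{-1}$ so that the translation direction on $\geo(g)$ goes from $\id A$ to $\id E$, read a normal form off the segment $[\id A,\,g\cdot\id A]$ of length $2l$: it yields $g=y_1x_1\cdots y_l a$ with $y_i\in E\setminus A$, $x_i\in A\setminus E$, $a\in A$, and $a\in A\cap E$ would give $|g|\le 2l-1<\lon(g)$, which is impossible; so $|g|=2l$.

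You may also find it worth noting that the entire lemma is a two-line consequence of the identity $|h|=\dist(I,h\cdot I)$, where $I$ is the midpoint of the edge $\id(A\cap E)$, combined with $\dist(p,h\cdot p)=\lon(h)+2\,\dist(p,\geo(h))$; both facts are exactly what the authors invoke in the proof of Lemma \ref{reducedform}. Indeed $|g|-\lon(g)=2\,\dist(I,\geo(g))$ equals $0$, $1$, or at least $3$ according to whether $\dist(I,\geo(g))$ is $0$, $\tfrac12$, or $\ge\tfrac32$, that is, according to whether $\geo(g)$ contains the whole base edge (both of $\id A$, $\id E$), exactly one of its endpoints, or neither. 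This bypasses all of the parity bookkeeping you rightly identify as the main hazard of the word-based approach.
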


\subsection{The group $G$} \label{thegroupG}

In this subsection we prove two basic facts about $G$.
Let us set $A_1=A \cap G$ and $E_1=E \cap G$.
Theorem \ref{Jung} easily implies
the following result:

\begin{prop}\label{prop:jungG}
$G= A_1 *_{A_1 \cap E_1} E_1$.
\end{prop}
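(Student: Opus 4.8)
The goal is to show $G = A_1 *_{A_1 \cap E_1} E_1$ where $A_1 = A \cap G$, $E_1 = E \cap G$. The plan is to deduce this directly from the ambient amalgam $\Aut[\C^2] = A *_{A\cap E} E$ (Theorem \ref{Jung}) together with the observation that the Jacobian determinant gives a multiplicative character $j : \Aut[\C^2] \to \C^*$ whose kernel is $G$. First I would record that $j$ restricts to surjections $A \twoheadrightarrow \C^*$ and $E \twoheadrightarrow \C^*$ (e.g. $(\alpha x, y) \in E$ realizes any value $\alpha$, and such a map also lies in $A$), so that $A = A_1 \cdot (A \cap E)$, $E = E_1 \cdot (A\cap E)$, and likewise $A\cap E = (A_1\cap E_1)\cdot(A\cap E)$ — in fact, for any fixed diagonal $\delta \in A\cap E$ with $j(\delta)$ an arbitrary prescribed value, every coset can be normalized into $G$ by multiplying on the right by a suitable power-like correction.

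Next I would verify that $G$ is generated by $A_1$ and $E_1$. Take $g \in G$ and write it in normal form $g = g_1 \cdots g_m$ with each $g_i$ alternately in $A$ or $E$. Using the surjectivity of $j$ on $A\cap E$, rewrite each $g_i$ as $g_i' s_i$ with $g_i' \in A_1$ or $E_1$ and $s_i \in A\cap E$; absorb the $s_i$ by conjugating/sliding them rightward through the remaining factors (the amalgamated part is normalized appropriately, or more simply: $s_i$ times a factor of the other type can be rewritten, keeping track of Jacobians), and since $j(g) = 1$ the accumulated correction at the end lies in $A_1 \cap E_1$, hence is absorbed into the last $G$-factor. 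This shows $G = \langle A_1, E_1\rangle$.

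Then I would check the amalgam axiom: a product $h_1 \cdots h_m$ with $h_i$ alternately in $A_1\setminus(A_1\cap E_1)$ and $E_1\setminus(A_1\cap E_1)$ is nontrivial. This is immediate because $A_1 \subseteq A$ and $E_1 \subseteq E$, and one must only confirm that $h_i \in A_1 \setminus (A_1 \cap E_1)$ implies $h_i \in A \setminus (A\cap E)$ — i.e. that $G$-elements lying in $A \cap E$ are exactly $A_1 \cap E_1$, which is the tautology $(A\cap E)\cap G = (A\cap G)\cap(E\cap G)$. Hence such a word is a reduced word in the ambient amalgam $A *_{A\cap E} E$, so it is $\neq \id$. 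By the standard characterization of amalgamated products (Serre, \cite{Se}, chap.~I), $G = A_1 *_{A_1\cap E_1} E_1$.

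The main obstacle is the bookkeeping in the generation step: one has to be careful that sliding a diagonal element $s \in A\cap E$ past an alternating word does not change the lengths or types in an uncontrolled way, and that the final leftover correction is genuinely in $G$. This is routine but is the only place requiring actual computation with the explicit forms of $A$, $E$; everything else is formal consequence of having a character that is surjective on each factor and on the amalgamated subgroup.
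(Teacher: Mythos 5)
Your proposal is correct and follows essentially the same route as the paper: both invoke Serre's criterion (reducing everything to the fact that reduced words in $A_1$ and $E_1$ are reduced in the ambient amalgam $A *_{A\cap E} E$, plus the generation statement), and both establish generation by pushing the Jacobian defect of each factor into the amalgamated part. The paper merely shortcuts your ``sliding'' bookkeeping by using the coset representatives $a(\lambda)$ and $e(Q)$, which were chosen from the start to have Jacobian determinant $1$, so that the single correcting term $s \in A\cap E$ in the normal form $g = ws$ automatically lies in $G$.
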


\begin{proof}
By \cite[chap. I, n$^{\circ}$ 1.1, Prop. 3]{Se},
it is sufficient to prove that any $g \in G$ is a composition
of affine and triangular automorphisms
with Jacobian determinant 1.
We know that we can write $g$ as a composition of $\al$ and
$e(Q)$,
with a correcting term $s \in A\cap E$.
Note that the $\al$ and $e(Q)$ are automorphisms with
Jacobian determinant 1,
so $s$ is also of Jacobian determinant 1 and we are done. 
\end{proof}

As a consequence of this proposition the whole discussion
of the previous subsection  still applies to $G$.
In particular we can make the same choice of representatives
$\al$ and $e(Q)$ to write edges and vertices,
so that there exists a natural bijection between the trees
associated to $\Aut[\C^2]$ and to $G$.

\begin{prop}\label{prop:comG}
The group $G$ is the commutator subgroup of the group
Aut$[\C^2]$,
and is also equal to its own commutator subgroup.  
\end{prop}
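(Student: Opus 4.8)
The plan is to establish two containments, each by an explicit commutator computation. Recall that $G = A_1 *_{A_1 \cap E_1} E_1$ with $A_1 = A \cap G$ and $E_1 = E \cap G$ (Proposition \ref{prop:jungG}), so it suffices to produce every element of $A_1$ and of $E_1$ as a product of commutators of elements of $\Aut[\C^2]$ (for the first assertion) and as a product of commutators of elements of $G$ (for the second). Since $E_1$ is generated together with $A_1$ (or even $A_1 \cap E_1$) by the elementary shears $e(P) = (x + P(y), y)$ with $P \in \C[y]$, and $A_1$ is generated by $\mathrm{SL}(2,\C)$ together with translations, it is in fact enough to hit these generators.

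First I would handle the shears. The key identity is the standard one: conjugating $e(P) = (x+P(y),y)$ by the scaling $d_t = (tx, t^{-1}y) \in A_1$ gives $d_t\, e(P)\, d_t^{-1} = (x + t P(t^{-1}y), y)$, hence the commutator $[d_t, e(P)]$ equals $e(R_t)$ with $R_t(y) = t P(t^{-1}y) - P(y)$. Choosing $t$ so that $t^{1-k} \ne 1$ for every $k$ with a nonzero coefficient $P_k$ of $y^k$ appearing in $P$ (possible since $\C^*$ is infinite), the map $P \mapsto R_t$ is a linear bijection on $\C[y]$ up to the constant term; a little care with the constant term — which corresponds to an honest translation, already a commutator in $\mathrm{SL}(2,\C)$, e.g. $[(x,y),\ \ldots]$, or absorbed by also varying $t$ — shows every $e(P)$ is a commutator of elements of $A_1 \cup E_1 \subseteq G$. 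The same computation works verbatim inside $\Aut[\C^2]$. This simultaneously handles the translation $(x+c,y)$ and, by symmetry in the two variables, $(x,y+c)$, so all translations lie in the commutator subgroup of $G$.

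Next I would handle the linear part, i.e.\ show $\mathrm{SL}(2,\C) \subseteq [G,G]$ and $\subseteq [\Aut,\Aut]$. This is classical: $\mathrm{SL}(2,\C)$ is a perfect group (it is its own commutator subgroup, since it is generated by the root subgroups which are commutators of a diagonal element with another root element — exactly the same computation as above but inside $\mathrm{SL}(2,\C)$). So every element of $\mathrm{SL}(2,\C) \subseteq A_1 \subseteq G$ is already a product of commutators of elements of $G$ (and of $\Aut[\C^2]$). Combining with the previous paragraph, $A_1$ is generated by elements of $[G,G]$ and $E_1$ is too, hence $G = [G,G]$; and likewise $G \subseteq [\Aut[\C^2], \Aut[\C^2]]$. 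For the reverse inclusion $[\Aut[\C^2],\Aut[\C^2]] \subseteq G$, note that the Jacobian determinant gives a homomorphism $\Aut[\C^2] \to \C^*$ with kernel exactly $G$, and $\C^*$ is abelian, so every commutator lies in $G$. Thus $G = [\Aut[\C^2],\Aut[\C^2]] = [G,G]$.

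The main obstacle is the bookkeeping around constant terms and which specific ambient group each commutator is taken in: one must be slightly careful that the commutator realizing $e(P)$ for a general polynomial $P$ (including its degree-$0$ and degree-$1$ parts) can be arranged with both entries in $G$, not just in $\Aut[\C^2]$; the fix is to also exploit translations and the perfectness of $\mathrm{SL}(2,\C)$, both already shown to lie in $[G,G]$, to clean up the low-degree terms. None of this is deep, but it is the only place where a genuine (if routine) verification is required; everything else is the elementary observation that $\mathrm{SL}(2,\C)$ is perfect plus the Jacobian homomorphism argument for the easy inclusion.
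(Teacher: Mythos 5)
Your proposal is correct and follows essentially the same route as the paper: reduce via the amalgamated product structure to the generators $\mathrm{SL}(2,\C)$, translations, and shears $(x+P(y),y)$, invoke perfectness of $\mathrm{SL}(2,\C)$, and realize the shears as commutators with diagonal scalings $(bx,b^{-1}y)$ (the paper does this monomial by monomial with $b$ a root of unity and handles translations via $[(-x,-y),(x-\frac{c}{2},y)]$, while you treat a general $P$ at once with a generic $t$ and clean up the low-degree terms afterwards — a cosmetic difference). The only loose phrase is calling the residual translation ``a commutator in $\mathrm{SL}(2,\C)$''; it is not an element of $\mathrm{SL}(2,\C)$, but your alternative fix (absorbing the constant term by the choice of $t$) does work.
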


\begin{proof} Using Proposition \ref{prop:jungG},
it is sufficient to check that the commutator subgroup of
$G$ contains $\sld$
and all triangular automorphisms of the form $(x+P(y), y)$.
But on one hand it is well-known that $\sld$ is equal to its
own commutator subgroup;
on the other hand any triangular automorphism $(x+ \lambda
y^n,y)$,
with $n \geq 2$ and $\lambda \in \C$, is the commutator
of $(x+\lambda (1-b) ^{-1} y^n,y)$ and $\left( bx,b^{-1}y
\right)$,
where $b \neq 1$ is a $n$-th root of the unity.
Finally, any translation $(x+c,y)$ is the commutator of
$(-x,-y)$ and $(x-\frac {c}{2},y)$.  
\end{proof}

\subsection{The color} \label{color}

We now introduce the {\bf color} of a path of type
$A-E-A$.
This notion will be used to make precise the genericness
assumptions we need.
Note that any path of type $A-E-A$ can be written $\path=
\psi e_1 A - \psi E - \psi e_2 A$,
where $\psi \in {\rm Aut}[\C^2]$ and $e_1,e_2 \in E$.

\begin{defi} 
\label{def:color} 
The \textbf{color} of $\path$ is the double
coset
$(A\cap E) e_1^{-1}e_2 (A\cap E)$.
\end{defi}

One verifies easily that this definition does not depend on
the choice of $e_1, e_2$.
The color  is clearly invariant under the  action of Aut$[\C^2]$.
In fact, given two paths of type $A-E-A$ one could even show
that one can send one to the other (by an element of Aut$[\C^2]$)
if and only if they have the same color.
However, we will not use this result.
As an illustration of the notion of color, we can 
note that the color of the path
$e(P) A - e(P) a( \lambda ) E - e(P) a( \lambda ) e (Q) A$
appearing in Lemma  \ref{subtreefixedbyatranslation}
has color $(A\cap E) e(Q) (A\cap E)$.

If $P \in \C[y]$ is such that the color of $\path$ is equal to
the double coset  $(A\cap E) e(P) (A\cap E) $,
we say that $P$ represents the color of $\path$.  
The following lemma implies that
this notion does not depend  on the orientation of the path.
Its proof is easy and left to the reader.

\begin{lem} \label{obviousresult}
Let $P,Q \in \C[y]$ be polynomials of degree $\geq 2$.
Then $P$ and $Q$ represent the same color if and only if there exist 
$\alpha, \ldots, \epsilon$ with $\alpha \beta \neq 0$ such that
$Q(y)= \alpha P ( \beta y + \gamma) + \delta y + \epsilon$.
\end{lem}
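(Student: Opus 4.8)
The plan is to unwind the definition of color into a concrete condition on $e_1^{-1}e_2$ and then translate it into the desired polynomial relation. Recall that $P$ and $Q$ represent the same color precisely when the double cosets $(A\cap E)\, e(P)\, (A\cap E)$ and $(A\cap E)\, e(Q)\, (A\cap E)$ coincide, i.e. when there exist $s, t \in A\cap E$ with $e(Q) = s\, e(P)\, t$. So the heart of the matter is to compute, for an explicit pair $s,t \in A\cap E$, what the composition $s\, e(P)\, t$ looks like, and to check that it is again of the form ``$(x + R(y), y)$ up to a triangular correcting term'', reading off $R$ in terms of $P$.

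First I would fix notation: an element of $A\cap E$ has the form $(\alpha x + \beta y + \gamma, \delta y + \varepsilon)$ with $\alpha\delta \neq 0$, and I would write $s = (a x + b y + c,\, d y + f)$, $t = (a' x + b' y + c',\, d' y + f')$. Composing $t$, then $e(P) = (x + P(y), y)$, then $s$, a direct substitution gives a map whose first coordinate is $a\big(a' x + b' y + c' + P(d' y + f')\big) + b(d' y + f') + c$ and whose second coordinate is $d(d' y + f') + f$. Observe that the ``$e(Q)$ part'' — the part of the first coordinate depending on $y$ only and of degree $\ge 2$ — is exactly $a\, P(d' y + f')$; everything else (the $x$-coefficient, the linear-in-$y$ terms, the constants, and the whole second coordinate) assembles into a genuine element of $A\cap E$ sitting on the left. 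Hence $s\,e(P)\,t = e\big(a\,P(d' y + f') + (\text{affine in }y)\big)\cdot(\text{element of }A\cap E)$, which represents the color given by the polynomial $R(y) = a\,P(d' y + f') + \text{(degree }\le 1\text{)}$. Setting $\alpha = a$, $\beta = d'$, $\gamma = f'$ and absorbing the affine tail into $\delta y + \varepsilon$, this is precisely the stated relation $Q(y) = \alpha P(\beta y + \gamma) + \delta y + \varepsilon$.

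For the converse direction, given $\alpha,\dots,\varepsilon$ with $\alpha\beta \neq 0$ and $Q(y) = \alpha P(\beta y + \gamma) + \delta y + \varepsilon$, I would simply exhibit explicit $s, t \in A\cap E$ realizing it — e.g. take $t$ to be the affine map $(x, \beta y + \gamma)$ (or rather its appropriate form keeping the first coordinate an $x$-translation-free affine map of the right shape) and $s$ to be $(\alpha x + \delta' y + \varepsilon', y)$ with $\delta', \varepsilon'$ chosen to account for the degree-$\le 1$ discrepancy — and verify by the same substitution that $s\, e(P)\, t = e(Q)\cdot(\text{element of } A\cap E)$, so the two double cosets agree.

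The only mildly delicate point — and the step I'd expect to be the main obstacle, though it is still routine — is bookkeeping the degree-$\le 1$ terms: one must be careful that when $P$ has degree $\ge 2$ the ``$e$-part'' of $s\,e(P)\,t$ really is just $\alpha P(\beta y + \gamma)$ with all lower-order contributions genuinely separable into the amalgamated factor $A\cap E$ (this uses $\beta \neq 0$, so that $P(\beta y + \gamma)$ still has degree $\ge 2$ and no cancellation of the top term occurs, and $\alpha \neq 0$). Once that is checked the lemma follows immediately, and the claimed independence of the orientation of the path is the special case obtained by also allowing the roles of $P$ and $Q$ to be swapped, which the symmetric form of the relation makes transparent.
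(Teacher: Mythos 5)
Your argument is correct and is exactly the computation the paper has in mind (the paper leaves this proof to the reader): unwinding the double-coset condition $e(Q)=s\,e(P)\,t$ with $s,t\in A\cap E$ and composing explicitly forces $aa'=1$, $dd'=1$, $df'+f=0$ and $Q(y)=aP(d'y+f')+(ab'+bd')y+(ac'+bf'+c)$, which is precisely the stated form, and conversely these five equations are clearly solvable for $s,t$ given $\alpha,\dots,\epsilon$ with $\alpha\beta\neq 0$. The only blemishes are cosmetic: in the converse your suggested $s=(\alpha x+\delta'y+\epsilon',y)$ must actually have second coordinate $(y-\gamma)/\beta$ so that the composite's second coordinate returns to $y$ (you flag this yourself), and the ``element of $A\cap E$'' you factor out sits on the side opposite to the one you first announce -- neither affects the validity of the proof.
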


\begin{rem} \label{rem:13}
Note that any path of type $A-E-A$ can be sent by an automorphism
to a path of the form $\id A- \id E - e(P) A$.
It is easy to check that the vertices $e(P) A$ and $e(Q)A$
are equal if and only if there exists
$\alpha, \beta \in \C$ such that $Q(y) = P(y) + \alpha y + \beta$.\\
\end{rem}

\begin{fundamentalexple}
\label{exple:fundamental}
Let $g$ be a hyperbolic automorphism  of geometric length $\lon(g) = 2l$.
We know that $g$ is conjugate to an automorphism of the form
$f=a_1e_1 \ldots a_le_l$ where each $a_i \in A \setminus E$
and each $e_i \in E \setminus A$.
Then, the geodesic of $g$ (and  $f$) carries the $l$ colors
$(A \cap E) e_i (A \cap E)$ ($1 \leq i \leq l$) which are repeated periodically.   
\end{fundamentalexple}

\subsection{General color} \label{generalcolor}

\begin{defi}
\label{def:general}
A polynomial $P \in \C [y]$  of degree $d \geq 5$ is said to be
\textbf{general} if it satisfies:

$\forall \, \alpha, \beta, \gamma \in \C, 
\hspace{3mm} 
\deg ( P(y) - \alpha P ( \beta y + \gamma)) \le d-4
\hspace{3mm} \Longrightarrow \alpha = \beta =1 \mbox{ and }
\gamma=0$.

The color $(A \cap E) e(P) (A \cap E)$
is said to be \textbf{general} if $P$ is general.
Lemma \ref{obviousresult} implies that
this notion does not depend on the choice of a representative $P$.
\end{defi}

\begin{lem} \label{stabilizer} Let $Q \in y^2 \C[y]$ be
general.
The stabilizer of the path $\path=e(Q)A - \id E-\id A$ is
equal to
$\{ (x+ \beta y+ \gamma,y); \beta, \gamma \in \C \}$.
Furthermore, if $\beta \neq 0$,
the automorphism $(x+ \beta y + \gamma ,y)$
does not fix any path strictly containing $\path$.
\end{lem}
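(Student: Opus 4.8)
The plan is to compute directly the stabilizer of the path $\path = e(Q)A - \id E - \id A$, using the explicit description of vertex and edge stabilizers from subsection~\ref{generalities}, and then exploit the genericness of $Q$ to cut things down. First I would note that an element $\phi \in \Aut[\C^2]$ fixes the vertex $\id A$ iff $\phi \in A$, fixes $\id E$ iff $\phi \in E$, and fixes the vertex $e(Q)A$ iff $\phi \in e(Q) A e(Q)^{-1}$. Fixing the whole path is equivalent to fixing its two end-vertices $e(Q)A$ and $\id A$ (the middle vertex and the two edges are then automatically fixed, being on the geodesic between them). Hence the stabilizer is
$$
A \cap e(Q) A e(Q)^{-1}.
$$
So the first real task is to identify this intersection. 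An element of $A$ is an affine map $x \mapsto (\alpha x + \beta y + \gamma,\ \delta x + \epsilon y + \zeta)$; conjugating such a map by $e(Q)^{-1} = (x - Q(y), y)$ and asking that the result again be affine gives a polynomial condition on $Q$. The key step is to carry out this conjugation and see which affine maps survive.

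Concretely, write $\phi = e(Q)^{-1} \circ \psi \circ e(Q)$ with $\psi \in A$ and demand $\phi \in A$, equivalently $\psi \in e(Q) A e(Q)^{-1}$. Computing $\psi \circ e(Q)$ and then $e(Q)^{-1}$ applied to the result, the first coordinate of $\phi$ involves a term of the form $\alpha Q(y) + (\text{affine in }x,y) - Q(\delta x + \epsilon y + \zeta)$, and the second coordinate involves $\delta Q(y) + (\text{affine})$. For $\phi$ to be affine we first need $\delta = 0$ (otherwise the second coordinate contains $\delta Q(y)$, of degree $\ge 2$). With $\delta = 0$ the map $\psi$ becomes $(\alpha x + \beta y + \gamma, \epsilon y + \zeta)$ with $\alpha\epsilon \neq 0$, and the first coordinate of $\phi$ becomes $\alpha x + \beta y + \gamma + \alpha Q(y) - Q(\epsilon y + \zeta)$. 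This is affine in $y$ precisely when $\deg\big(\alpha Q(y) - Q(\epsilon y + \zeta)\big) \le 1$. Since $Q$ is general of degree $d = \deg Q \ge 5$ (note $2 \le d-4$ would be needed only if $d \ge 6$; but one checks the generality hypothesis with $\beta = \epsilon$, $\gamma = \zeta$ forces $\alpha = \epsilon = 1$, $\zeta = 0$ as soon as the degree drops by $4$, hence in particular when it drops to $\le 1$). Therefore $\alpha = \epsilon = 1$ and $\zeta = 0$, so $\psi = (x + \beta y + \gamma, y)$, and then $\phi = e(Q)^{-1}\psi e(Q) = (x + \beta y + \gamma, y)$ as well. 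This yields exactly the claimed stabilizer $\{(x + \beta y + \gamma, y);\ \beta, \gamma \in \C\}$; conversely each such map clearly commutes with $e(Q)$ and lies in $A$, hence fixes the path.

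For the last assertion, suppose $\beta \neq 0$ and that $\sigma := (x + \beta y + \gamma, y)$ fixes some path strictly containing $\path$. Such an extension adds at least one edge at one of the two ends. Since $\sigma$ already fixes $\id A$, $\id E$ and $e(Q)A$, extending beyond $\id A$ would force $\sigma$ to fix some vertex $aE$ with $a \in A \setminus E$ adjacent to $\id A$; but $\sigma$ fixes $aE$ iff $\sigma \in a E a^{-1}$, and running through the normal-form/explicit description one checks no such $a \notin E$ exists for a map with $\beta \neq 0$ (the obstruction being exactly that $\sigma$ moves every $E$-neighbour of $\id A$ other than $\id E$). Extending beyond $e(Q)A$ would similarly require $\sigma$ to fix a vertex of the form $e(Q)a'E$ with $a' \in A\setminus E$; conjugating by $e(Q)^{-1}$ this is the same condition as before for the fixed map $e(Q)^{-1}\sigma e(Q) = \sigma$ (using that $\sigma$ commutes with $e(Q)$), so again impossible. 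Thus no strictly larger fixed path exists. I expect the main obstacle to be the bookkeeping in this last step: identifying precisely which $E$-type neighbours of $\id A$ can be fixed by $(x+\beta y+\gamma, y)$ with $\beta \neq 0$, which is most cleanly done by transporting everything to a standard position via the transitivity of the action and the normal form $w s$, reducing to the observation that $(x + \beta y + \gamma, y)$ with $\beta \neq 0$ is not triangular with respect to any "rotated" elementary subgroup — equivalently, it is a nontrivial unipotent element of $\mathrm{SL}_2$ whose fixed subtree was already pinned down (up to conjugacy) by the computation cited from \cite{LaAlg} and by Lemma~\ref{subtreefixedbyatranslation}.
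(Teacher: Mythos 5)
Your proposal is correct and follows essentially the same route as the paper: the stabilizer computation reduces, after conjugating by $e(Q)$, to the condition $\deg(\alpha Q(y)-Q(\epsilon y+\zeta))\le 1$ plus the generality of $Q$, and the second claim reduces to the one-line coset computation $(x+\beta y+\gamma,y)\,a(\lambda)E=a(\lambda-\beta)E\neq a(\lambda)E$ when $\beta\neq 0$, which is exactly what the paper does at both ends of the path (using that $(x+\beta y+\gamma,y)$ commutes with $e(Q)$). One caveat: your closing aside that this last step could instead be read off from Lemma~\ref{subtreefixedbyatranslation} "up to conjugacy" is off the mark, since $(x+\beta y+\gamma,y)$ with $\beta\neq 0$ is not conjugate to a translation $(x+c,y)$ (it fixes the line $\beta y+\gamma=0$ pointwise while a nontrivial translation is fixed-point free), so its fixed subtree is not the one described there --- the direct normal-form computation you also propose is the right way to finish.
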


\begin{proof}
We know that $f\in \Aut [\C^2 ]$ fixes the path
$\id E-\id A$
if and only if $f \in A \cap E$.
In this case, there exists constants $\alpha, \ldots,
\zeta$,
with $\alpha \varepsilon \neq 0$ 
such that
$f= ( \alpha x + \beta y + \gamma, \varepsilon y + \zeta)$.
Since $f e(Q)= e(Q) g$, where
$g= (\alpha x + \beta y + \alpha Q (y) - Q( \varepsilon y +
\zeta), \varepsilon y + \zeta)$,
the vertex $e(Q) A$ is fixed by $f$ if and only if $g \in
A$,
i.e. $\deg (\alpha Q (y) - Q( \varepsilon y + \zeta)) \leq
1$.
The polynomial $Q$ being general, this is equivalent
to $\alpha = \varepsilon=1$ and $\zeta=0$.

The second assertion comes from the following simple
observation: 
$$(x+ \beta y + \gamma ,y) \al E = a(\lambda - \beta )E. $$
Indeed, since $(x+ \beta y + \gamma ,y) e(Q) =e(Q) (x+ \beta y + \gamma, y)$,
we also have
$$(x+ \beta y + \gamma ,y) e(Q) \al E = e(Q) a(\lambda - \beta )E.$$
Therefore, the vertices $\al E$ and $e(Q) \al E$ are  fixed
by $(x+ \beta y + \gamma ,y)$ if and only if $\beta=0$. 
\end{proof}

\begin{rem} \label{subtreefixedbyatranslationbis}
Lemma \ref{stabilizer} is a kind of converse to Lemma
\ref{subtreefixedbyatranslation}.
Precisely, we obtain that if $\phi$ fixes a general path of
4 edges centered on $\id E$,
then $\phi = (x+c,y)$ (Here by general we mean that the color supported
by the two central edges of the path is general; see Def. \ref{def:color} and below).

Note also that since $(x,y+c)= a(0) \circ  (x-c,y) \circ 
a(0)^{-1}$,
the subset of $\T$ fixed by  $(x, y +c )$
is the image by $a(0)$ of the subset fixed by $(x-c,y)$.
In particular, it contains the closed ball of radius 2
centered at $a(0) E$.
Furthermore, if $\phi$ fixes a general path of 4 edges
centered at $a(0) E$,
it can be written as $\phi = (x,y+c)$.\\
\end{rem}

We now apply the notion of a general color to prove a technical result
that we need to prove Theorem \ref{thm:B}. 
We consider a hyperbolic automorphism $f$
and $g = \varphi f \varphi^{-1} \neq f$ a conjugate of $f$.
We want to show that if $f$ is sufficiently general
then $\geo (f) \cap \geo (g)$ is a path of length at most 4.
More precisely, we also describe all possibles types of such paths.

\begin{defi} \label{def:C1}
We say that a hyperbolic automorphism of geometric length $2l$ satisfies condition $(C1)$
if the $l$ colors supported by its geodesic (see Example \ref{exple:fundamental}) are general and distinct.
\end{defi}

In the annex we show that
this condition is  generic in a natural sense.

\begin{prop}\label{prop:geo}
Let $f$ and $g = \phi f \phi^{-1}$ be two distinct conjugate
automorphisms
satisfying condition $(C1)$.
If the intersection $\geo(f) \cap \geo(g)$ contains at least
one edge
then this path is of type:
$$ A-E, \; E-A-E, \; A-E-A,\mbox{ or } E-A-E-A-E $$
\end{prop}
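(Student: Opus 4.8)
The plan is to argue by contradiction on the length of the intersection path $\path = \geo(f) \cap \geo(g)$. First I would record the constraints coming from the fact that both geodesics have a periodic color pattern along them, with the $l$ colors of $f$ repeated periodically on $\geo(f)$, and the same colors (in cyclic order, up to the orientation ambiguity resolved by Lemma~\ref{obviousresult}) repeated on $\geo(g)=\phi(\geo(f))$. If $\path$ has length $\geq 6$, then it contains a subpath of type $A-E-A-E-A$ carrying at least two consecutive colors, say $c_i, c_{i+1}$, appearing on $\geo(f)$. Since $\path$ also sits inside $\geo(g)$, these same two colors must appear consecutively there; by the periodicity of the colors on $\geo(g)$ and the fact that the colors of $f$ (hence of $g$) are \emph{distinct} (condition $(C1)$), this forces the two color sequences to be aligned — the element $\phi$ must carry a vertex of $\geo(f)$ to a vertex of $\geo(g)$ in a way that is compatible with the translation $f$ up to sign, and one then checks that $\phi$ normalizes, or sends to itself, the line $\geo(f)=\geo(g)$, i.e.\ $\geo(f)=\geo(g)$.

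Once $\geo(f)=\geo(g)$, both $f$ and $g$ are translations of the same line by $\pm 2l$ preserving the periodic coloring, so $g = f^{\pm 1}$ after possibly translating along the line; combined with $g=\phi f \phi^{-1}$ and the rigidity from condition $(C1)$ (the colors being general, the stabilizer of a long enough colored subpath is very small, by Lemma~\ref{stabilizer} and Remark~\ref{subtreefixedbyatranslationbis}) one deduces $\phi$ fixes $\geo(f)$ pointwise or acts on it as an element commuting with $f$, forcing $g=f$, contradicting $g\neq f$. So $\path$ has length at most $5$.

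It then remains to eliminate, among paths of length $\leq 5$, all types other than the four listed — i.e.\ to rule out a path of type $E-A-E-A$ or $A-E-A-E$ (length $3$ with the ``wrong'' endpoints), of type $A-E-A-E-A$ (length $4$), and any length-$5$ path. For the length-$4$ type $A-E-A-E-A$ and any length-$5$ path: such a path contains a general colored subpath of $4$ edges centered at a type-$E$ vertex, so by Remark~\ref{subtreefixedbyatranslationbis} the only automorphisms fixing it are the relevant one-parameter groups of (conjugates of) translations; but both $f$ and $g$ act as \emph{hyperbolic} translations along $\path$, so the stabilizer considerations give a contradiction with the structure of $\geo(f)\cap\geo(g)$ being a common translation axis segment of two elements whose restrictions to it must agree up to orientation — again pushing us back to $\geo(f)=\geo(g)$. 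The asymmetric length-$3$ types $E-A-E-A$ are excluded because the intersection of two geodesics in a tree, if it is a segment of odd length, is ``centered'' symmetrically on an edge, forcing the endpoints to be of the \emph{same} type; a careful look using the vertex-type combinatorics of $\T$ (every edge joins a type-$A$ to a type-$E$ vertex) shows a length-$3$ segment must be of type $A-E-A-E$ reversed to $E-A-E-A$, which upon relabelling is $E-A-E-A$ — here one uses that $\geo(f)$ and $\geo(g)$ can cross only in a way symmetric under the swap of the two geodesics, hence the endpoints of $\path$ must be symmetric, which is incompatible with an $A$ on one end and an $E$ on the other unless $\path$ has even length; thus odd-length intersections have both endpoints of type $E$ or both of type $A$, giving precisely $E-A-E$ (length $2$) among the short odd cases and excluding $E-A-E-A$.

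The main obstacle I expect is the length-$4$ (type $A-E-A-E-A$) and length-$5$ cases: there the combinatorial ``color alignment'' argument is delicate, because one must carefully use that condition $(C1)$ gives \emph{distinct} colors (to pin down the alignment uniquely) \emph{and} \emph{general} colors (to get the strong stabilizer rigidity of Lemma~\ref{stabilizer}), and then convert ``$f$ and $g$ translate a long common segment compatibly'' into the equality $\geo(f)=\geo(g)$ and finally into $g=f$, contradicting $g\ne f$. Getting the bookkeeping of which colored subpath is centered at which vertex type exactly right — so that Remark~\ref{subtreefixedbyatranslationbis} applies — is where the real work lies; the remaining exclusions of ``wrong-endpoint'' short paths are elementary tree combinatorics about vertex types.
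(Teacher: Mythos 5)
Your proposal assembles the right ingredients (periodicity and distinctness of the colors, generality of the colors, the stabilizer computations of Lemmas \ref{subtreefixedbyatranslation} and \ref{stabilizer}) but the logical skeleton that makes them work is missing, and the step you lean on most is false. The key inference in your first paragraph --- that alignment of the color sequences forces $\geo(f)=\geo(g)$ --- does not hold: two distinct conjugates can share a bounded segment without having equal axes. Indeed $\phi=(x+c,y)$ conjugating an $f$ whose geodesic passes through $\id E$ gives $\geo(g)\neq\geo(f)$ sharing exactly the $4$-edge path $E-A-E-A-E$ around $\id E$; this is one of the allowed cases, not a contradiction. What the periodicity/distinctness of colors actually buys is weaker and is the crux of the paper's proof: after replacing $\phi$ by $\phi f^k$ (which does not change $g$), one gets $\dist(\phi(v),v)<2l$ for the central type-$E$ vertex $v$ of an $A-E-A$ in the intersection, and then distinctness of the $l$ colors forces $\phi(v)=v$, so $\phi$ is \emph{elliptic}. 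From there the paper splits on whether $\phi$ preserves the orientation of $\geo(f)$: if yes, $\phi$ fixes the intersection pointwise, Lemma \ref{stabilizer} pins it down to $(x+\gamma,y)$ as soon as the path exceeds $A-E-A$, and Lemma \ref{subtreefixedbyatranslation} plus generality of the colors caps the intersection at the $4$-edge ball around $v$ (the third step of the fixed tree carries a degree-$2$, hence non-general, color); if no, $\phi$ is an involution (Lemma \ref{lem:involution}), the path is even-length and centered at $v$, and distinctness of colors caps it at one color. None of this dichotomy appears in your write-up, and your substitute --- ``$f$ and $g$ act as hyperbolic translations along $\path$, so the stabilizer considerations give a contradiction'' --- misidentifies the relevant element: it is $\phi$, not $f$ or $g$, whose action on $\path$ is constrained by the stabilizer lemmas.

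Two further concrete problems. First, your exclusion of the length-$3$ types rests on the claim that an odd-length intersection has both endpoints of the same type; in the bipartite tree $\T$ two vertices at odd distance always have \emph{different} types, so the claim is vacuously false, and the ``symmetry under swap of the two geodesics'' principle you invoke has no justification (in the paper, length $3$ is excluded because in the orientation-preserving case the intersection is exactly $A-E-A$ or exactly $E-A-E-A-E$, and in the orientation-reversing case it is even-length and centered at a type-$E$ vertex). Second, you yourself defer the length-$4$ and length-$5$ cases to ``careful bookkeeping'' you expect to be delicate --- that bookkeeping is precisely the content of the proof, so as it stands the argument has a genuine gap rather than a complete alternative route.
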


\begin{proof}

There is no restriction to assume that $\path ' = \geo(f) \cap \geo(g) =\geo(f) \cap  \phi ( \geo(f) )$ contains
a path of type $A-E-A$, because otherwise
$\path '$ is at most a path of type $E-A-E$.

Let us call $v$ the central vertex of type $E$ of this subpath of $\path '$.
Since $\phi^{-1}(v) \in \geo (f)$, there exists an integer $k$
such that $\dist (f^k (v), \phi ^{-1} (v) ) = \dist ( (\phi f^k) (v), v) < \lg(f) = 2l$.
Replacing $\phi$ by $\phi f^k$, we do not change $g$, but we now have
$\dist ( \phi (v), v) <  2l$.
By  condition $(C1)$, the geodesic of $f$ carries $l$
distinct colors which are repeated periodically.
Therefore, $\dist(\phi(v), v) \in 2l\Z$ and finally we get
$\phi (v) =v$, so that $\phi$ is elliptic.

Let us set   $\path = \phi ^{-1} ( \path ' ) = \geo(f) \cap \phi^{-1}(\geo(f))$.
Equivalently, one may define $\path$ as the maximal path
such that $\path \subseteq \geo(f)$ and $\phi( \path ) \subseteq \geo(f)$.

The path $\path$ contains a path of type $A-E-A$ whose central vertex is $v$.
Without loss of generality, one can now conjugate and assume
that this  subpath is of the
form $e(Q)A- \id E - \id A$. In particular $v = \id E$.

There are two subcases, depending on whether $\phi\colon \path \to \phi(\path)$
preserves the orientation induced by $\geo(f)$.

If $\phi$ preserves this orientation, then $\phi$ fixes
$\path$ point by  point.
We may assume that $\path$ is strictly greater than
$e(Q)A- \id E - \id A$, because otherwise there is nothing to show.
Then, by  Lemmas \ref{subtreefixedbyatranslation} and \ref{stabilizer}, we 
get $\phi= (x+ \gamma, y)$.
Since the colors of $\geo (f)$ are general, Lemma \ref{subtreefixedbyatranslation}
shows us that $\path$ is of the form
$e(Q) a(\lambda) E - e(Q) A- \id E - \id A - a( \mu) E$,
so that it is of type $E-A-E-A-E$.

If $\phi$ does not preserve this orientation, then $\phi$
fixes only the vertex $v$ of $\geo(f)$.
One can show that $\phi$ has to be an involution (see Lemma
\ref{lem:involution} below).
This implies that $\path$ contains an even number of edges
and is centered on $v$.
Since the $l$ colors supported by $\geo (f)$ are distinct,
$\path$ contains only one color, so that it is of type
$A-E-A$ or $E-A-E-A-E$. 
\end{proof}

\begin{lem}\label{lem:involution}
Let $\path$ be a path of type $A-E-A$ carrying a general
color.
If $\phi \in {\rm Aut}[\C^2]$ exchanges the two ends of $\path$ then 
$\phi^2=\id$. 
\end{lem}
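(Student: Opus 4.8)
The plan is to reduce to a normal form using the action on the tree $\T$, and then to exploit the rigidity of a general color via Lemma \ref{stabilizer}. After conjugating we may assume $\path = e(Q)A - \id E - \id A$ with $Q \in y^2\C[y]$ general, and that $\phi$ exchanges the ends, i.e. $\phi(e(Q)A) = \id A$ and $\phi(\id A) = e(Q)A$. Since $\phi$ exchanges two vertices of type $A$ it fixes the midpoint, so $\phi(\id E) = \id E$, hence $\phi \in E\phi_0$ for... more precisely, $\phi$ stabilizes the vertex $\id E$, so $\phi \in E$. Writing $\phi = (\alpha x + P(y), \beta y + \gamma)$ with $\alpha\beta \neq 0$, I would compute the two edge conditions. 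The edge $\id A - \id E$ has to be sent to $e(Q)A - \id E$; since $\phi$ fixes $\id E$ and sends $\id A$ to $e(Q)A$, and the edges through $\id E$ are the cosets of $A\cap E$ in $E$, this translates into an explicit congruence relating $\phi$ and $e(Q)$.

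Concretely, the condition $\phi(\id A) = e(Q)A$ means $\phi \in e(Q)(A\cap E)$, so $\phi = e(Q)\circ(\alpha x + \beta y + \gamma, \varepsilon y + \zeta)$ for suitable constants with $\alpha\varepsilon \neq 0$; that is, $\phi = (\alpha x + \beta y + \gamma + Q(\varepsilon y + \zeta),\, \varepsilon y + \zeta)$. Then $\phi^2$ fixes $\id E$ (clear) and I must check $\phi^2(\id A) = \id A$, i.e. $\phi^2 \in A\cap E$, together with $\phi^2$ being actually $\id$ on the relevant vertices of $\path$. Applying $\phi$ once more, $\phi^2 = (\alpha(\alpha x + \beta y + \gamma + Q(\varepsilon y + \zeta)) + \beta(\varepsilon y + \zeta) + \gamma + Q(\varepsilon(\varepsilon y+\zeta)+\zeta),\ \varepsilon(\varepsilon y + \zeta)+\zeta)$. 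The $y$-component of $\phi^2$ is $\varepsilon^2 y + (\varepsilon\zeta + \zeta)$. The $x$-component contains the polynomial term $\alpha Q(\varepsilon y + \zeta) + Q(\varepsilon^2 y + \varepsilon\zeta + \zeta)$. For $\phi^2$ to be affine — which it must be, since $\phi^2$ fixes both $\id A$ and $e(Q)A$ (as $\phi$ swaps them), hence $\phi^2 \in A\cap E$, in particular $\phi^2$ acts trivially on the edge $\id E - \id A$ — this polynomial term must have degree $\le 1$. Here is where generality of $Q$ enters: $\deg\bigl(\alpha Q(\varepsilon y + \zeta) + Q(\varepsilon^2 y + \varepsilon\zeta+\zeta)\bigr) \le 1$; rewriting with $Q$ general (Definition \ref{def:general}, applied after a harmless affine change, using that $\deg Q \geq 2$), I expect to force $\varepsilon^2 = 1$ together with $\alpha = -1$ if $\varepsilon = -1$, or a contradiction if $\varepsilon = 1$. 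Indeed if $\varepsilon = 1$ then $\phi$ fixes the edge $\id E - \id A$, hence fixes $\id A$, contradicting that $\phi$ swaps the two ends (the color being general, $e(Q)A \neq \id A$). So $\varepsilon = -1$, and then the degree condition on $\alpha Q(-y+\zeta) + Q(y-\zeta+\zeta) = \alpha Q(-y+\zeta)+Q(y)$ being $\le 1$ forces, via generality of $Q$, that $\alpha = -1$ and $\zeta = 0$ (so that $Q(-y) = Q(y)$ up to lower order, consistent with $Q$ general being... actually one must be slightly careful, but generality gives $\alpha\cdot(-1)^{?}$ — the precise bookkeeping is the routine part).

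With $\varepsilon = -1$, $\alpha = -1$, $\zeta = 0$, the $y$-component of $\phi^2$ is $y$ and the $x$-component is $(-1)(-x + \beta y + \gamma) + \beta(-y) + \gamma + 0 = x - \beta y - \gamma - \beta y + \gamma = x - 2\beta y$... so $\phi^2 = (x - 2\beta y + \text{const}, y)$, which is a translation/shear, not necessarily $\id$ yet. But now I use the \emph{second} part of the hypothesis more carefully: $\phi^2$ fixes both ends of $\path$, hence fixes all of $\path$, in particular fixes $e(Q)A$; so $\phi^2 \in A\cap E$ stabilizes $e(Q)A$, which by the computation in the proof of Lemma \ref{stabilizer} forces its linear-in-$y$ coefficient to vanish — i.e. $\beta = 0$ — and its $y$-translation part to vanish. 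Hence $\phi^2 = \id$. The main obstacle is the degree bookkeeping in the generality argument: one needs to match the expression $\alpha Q(\varepsilon y + \zeta) + Q(\varepsilon^2 y + (\varepsilon+1)\zeta)$ to the template $Q(y) - \alpha' Q(\beta' y + \gamma')$ of Definition \ref{def:general}, which requires dividing by a leading coefficient and checking the degree drop is at most $d - 4$ (here it will even be $\le 1 \le d-4$ since $d \geq 5$ is not assumed — wait, $Q$ general already requires $\deg Q \ge 5$, so $d - 4 \ge 1$ and the hypothesis applies); once the template is matched, Definition \ref{def:general} outputs exactly the rigidity $\alpha = \beta = 1$, $\gamma = 0$ needed to pin down the constants. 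I would present this computation compactly, flagging that "general" is invoked precisely once, to kill the nontrivial shear.
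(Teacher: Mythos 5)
Your setup is sound, and your route (normal form $\phi = e(Q)\circ s$ with $s = (\alpha x + \beta y + \gamma, \varepsilon y + \zeta) \in A\cap E$, then forcing the polynomial part of $\phi^2$ to have degree $\le 1$ and invoking generality) differs from the paper's and could in principle be made to work --- but the case analysis you extract from the generality condition is wrong, and the error propagates to the end. Matching $\alpha Q(\varepsilon y+\zeta)+Q(\varepsilon^2 y+(\varepsilon+1)\zeta)$ to Definition \ref{def:general}: substituting $y\mapsto (y-(\varepsilon+1)\zeta)/\varepsilon^2$ turns the condition into $\deg\bigl(Q(y)-(-\alpha)\,Q(y/\varepsilon - \zeta/\varepsilon)\bigr)\le 1\le d-4$, whence $-\alpha = 1$, $1/\varepsilon = 1$ and $-\zeta/\varepsilon = 0$, i.e. $\alpha=-1$, $\varepsilon=1$, $\zeta=0$. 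So generality forces $\varepsilon = 1$, the opposite of what you claim. Your argument that ``$\varepsilon=1$ implies $\phi$ fixes the edge $\id E-\id A$, hence fixes $\id A$'' confuses the parameter $\varepsilon$ of the correcting factor $s$ with a property of $\phi$ itself: with $\varepsilon=1$, $\alpha = -1$, $\zeta = 0$ one has $\phi=(-x+\beta y+\gamma+Q(y),y)$, which is not affine and visibly swaps $\id A$ and $e(Q)A$; a one-line computation then gives $\phi^2=\id$ with no condition on $\beta,\gamma$ at all.

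The branch you actually pursue ($\varepsilon=-1$) is precisely the one excluded by generality, and your attempt to close it is also invalid: you assert that $\phi^2=(x-2\beta y+c,y)$ stabilizing $e(Q)A$ forces $\beta=0$, but Lemma \ref{stabilizer} says that the pointwise stabilizer of $\path$ is exactly the full group $\{(x+\beta y+\gamma,y)\}$ with $\beta,\gamma$ arbitrary; only fixing a path strictly larger than $\path$ would kill $\beta$, and you have no such larger path. For comparison, the paper sidesteps all of this bookkeeping: it exhibits the explicit involution $\phi_1=e(Q)\circ(-x,y)$ exchanging the two ends, deduces from Lemma \ref{stabilizer} that $\phi=\phi_1\circ(x+\beta y+\gamma,y)$, and concludes from the relation $\phi_1\circ(x+\beta y+\gamma,y)=(x+\beta y+\gamma,y)^{-1}\circ\phi_1$ that $\phi^2=\id$.
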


\begin{proof}
Without loss of generality, one can conjugate
and assume that the path $\path$ is of the form $e(Q)A-\id
E-\id A$ (see Remark \ref{rem:13}).
Note that $\phi_1 = e(Q) \circ (-x,y)$ is an involution
that exchanges the two vertices $e(Q)A$ and $\id A$.
Thus $\phi_1 \phi$ fixes the path $\path$ point by point,
and since $Q$ is general by Lemma \ref{stabilizer} we get
$\phi = \phi_1 \circ (x+\beta y+\gamma,y)$.
Remark that $\phi_1 \circ  (x+\beta y+\gamma,y)=(x+\beta
y+\gamma,y)^{-1} \circ \phi_1$, hence

\vskip1mm
\hspace{20mm} $\phi^2 = \phi_1 \circ  (x+\beta y+\gamma,y) \circ (x+\beta
y+\gamma,y)^{-1} \circ \phi_1 = \id.$   
\end{proof}

\begin{exple}
Here we show that all cases allowed by Proposition
\ref{prop:geo} can be realized.
In the following examples we suppose that $\geo(f)$ contains
the path
$a(0)E-\id A-\id E-e(Q)A-e(Q)\am E$ where $Q$ is a general polynomial
and we choose $\phi$ such that the path $\path$ has various
forms. 

\begin{enumerate}
 \item Examples with $\phi$ fixing  at least one edge:
\begin{itemize}
\item $\phi = (x+P(y),y)$ with $\deg P \geq 2$,
$\path = { \id A}-{\id E}$;
\item $\phi = (\alpha x,\beta y)$ with $\alpha \beta \neq 0$ and $(\alpha, \beta ) \neq (1,1)$,
$\path = {a(0)E}-{ \id A}-{\id E}$;
\item $\phi = (x+by,y)$ with $b \neq 0$, $\path = {\id A}-{\id E}-{e(Q)A}$;
 \item $\phi = (x+c,y)$ with $c \neq 0$, $\path =a(0)E-\id A-\id E-e(Q)A-e(Q)\am E $.\\
\end{itemize}

\item Examples with $\phi$ reversing the orientation:
\begin{itemize}
 \item $\phi = (y,x)$ exchanges $a(0)E$ and $\id E$,
 $\path = {a(0)E}-{ \id A}-{\id E} $;
 \item $\phi = (-x+Q(y),y)$ exchanges $\id A$ and $e(Q)A$,
 $\path$ is of length 4 or 2 depending if $\mu = 0$ or
not.\\
\end{itemize}

\item  Example with $\phi$ hyperbolic:
\begin{itemize}
 \item $\phi = e(Q)\am u$ with $u = (-x,-y)$ sends $\path =
a(0)E  - \id A - \id E$
to $\phi( \path ) = \id E - e(Q)A-e(Q)\am E$
 (the reader should verify that $\am u a(0) = (x-\mu y,y)
\in A\cap E$).
\end{itemize}
\end{enumerate}
\end{exple}

\subsection{Independent colors and tripods} \label{independentcolors}

\begin{defi} \label{independentsequence}
A family of polynomials $P_i \in \C [y]$ ($1\leq i \leq l$)
is said to be \textbf{independent} if
given any $\alpha_k, \beta_k, \gamma_k \in \C$ with
$\alpha_k \beta_k \neq 0$
and $i_k \in \{ 1, \ldots, l \}$, for $1 \leq k \leq 3$, we
have:

$$ \deg \hspace{-1mm} \sum_{1 \,  \leq  \, k \,  \leq  \, 3} 
\hspace{-3mm} \alpha _k P_{i_k} ( \beta_k y + \gamma_k)
\hspace{2mm} \leq \hspace{2mm} 1 \hspace{5mm} 
\Longrightarrow \hspace{5mm} i_1=i_2=i_3.$$

The family of colors $(A \cap E) e(P_i) (A \cap E)$ ($1\leq i \leq l$)
is said to be \textbf{independent} if the family  $P_i$ ($1\leq i \leq l$)
is independent.
Lemma \ref{obviousresult} implies that
this notion does not depend on the choice of the representatives $P_i$.
\end{defi}

\begin{defi}
Three paths $\path _1$, $\path _2$, $\path _3$ of the tree $\T$
define a \textbf{tripod} if 
\begin{itemize}
 \item For each $i \neq j$, $\path _i \cap \path _j$ contains at least one edge; 
 \item The intersection $\path _1 \cap \path _2 \cap \path _3$ consists of exactly one vertex $v$.
\end{itemize}

The three paths $\path _i \cap \path _j $ are called the
\textbf{branches} of the tripod.
The vertex $v$ is called the \textbf{center} of the tripod.

If we have a center of type $E$, we can consider
the three colors associated with the three  paths of type
$A-E-A$
containing the center and included in the tripod.
In this situation we say that any one of these colors
is a \textbf{mixture} of the two other ones.

\end{defi}

\begin{lem} \label{algebraiccriterionofmixture}
Let $P_1,P_2,P_3 \in \C[y]$ be polynomials of degree $\geq 2$.
The following assertions are equivalent:
\begin{enumerate}
\item
$(A \cap E) e(P_3) (A \cap E)$ is a mixture of the 
$(A \cap E) e(P_i) (A \cap E)$'s ($1 \leq i \leq 2$);
\item
$\exists \, \alpha_1,\beta_1, \gamma_1,\alpha_2,\beta_2, \gamma_2, \delta, \epsilon  \in \C$ with
$\alpha_1 \beta_1 \alpha_2 \beta_2 \neq 0$
such that
$$P_3(y)= \alpha_1 P_1( \beta_1 y + \gamma_1) + \alpha_2 P_2( \beta_2 y + \gamma_2)
+ \delta y + \epsilon.$$
\end{enumerate}
\end{lem}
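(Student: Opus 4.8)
The plan is to unwind the definition of mixture into an explicit coordinate computation, exactly as was done in Lemma~\ref{stabilizer}. First I would reduce to a normalized configuration: up to the action of $\mathrm{Aut}[\C^2]$ (which preserves colors), I may assume the center of the tripod is $\id E$, and that two of the three $A$-vertices around it are $e(P_1)A$ and $\id A$, so the corresponding path of type $A-E-A$ is $e(P_1)A - \id E - \id A$, which by Definition~\ref{def:color} carries the color $(A\cap E)e(P_1)(A\cap E)$. The third $A$-vertex included in the tripod is then some vertex $wA$ adjacent to $\id E$; since vertices adjacent to $\id E$ are exactly the $e(R)A$ for $R\in y^2\C[y]$ together with $\id A$, and since it must be distinct from the other two, I can write it as $e(R)A$ for a suitable $R$, with the path $\id A - \id E - e(R)A$ carrying color $(A\cap E)e(R)(A\cap E)$. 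By the definition of mixture, assertion (1) says precisely that $(A\cap E)e(P_3)(A\cap E)$ is the color of the remaining path $e(P_1)A - \id E - e(R)A$, which by Definition~\ref{def:color} is $(A\cap E)P_1^{-1}\!\circ\! e(R)(A\cap E)$, i.e.\ $(A\cap E)e(R-P_1)(A\cap E)$ after simplification.

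So the core of the argument is: (1) holds iff $P_3$ and $R-P_1$ represent the same color in the sense of Lemma~\ref{obviousresult}, for some choice of $R$ with $R-P_1$ (equivalently $R$) representing the same color as $P_2$. Applying Lemma~\ref{obviousresult} twice then gives: $R(y) = \alpha_2 P_2(\beta_2 y + \gamma_2) + \delta' y + \epsilon'$ for some $\alpha_2\beta_2\neq 0$, and $P_3(y) = \alpha_1(R-P_1)(\beta_1 y+\gamma_1) + \delta'' y + \epsilon''$ for some $\alpha_1\beta_1\neq 0$. Substituting the first into the second, one gets
$$P_3(y) = \alpha_1 \alpha_2 P_2(\beta_2(\beta_1 y+\gamma_1)+\gamma_2) - \alpha_1 P_1(\beta_1 y + \gamma_1) + (\text{affine in } y),$$
which, after renaming the constants ($\alpha_1' = -\alpha_1$, $\alpha_2' = \alpha_1\alpha_2$, composing the affine substitutions), is exactly the shape in assertion (2). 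Conversely, given an identity as in (2), I would read it backwards: set $R = P_1 + \text{(the } P_2\text{-part, suitably re-parametrized)}$, check that $e(R)A$ is a genuine vertex adjacent to $\id E$ distinct from the other two (this uses $\deg P_i \geq 2$ and Remark~\ref{rem:13} to absorb the affine terms), and verify the three pairwise intersections each contain an edge and the triple intersection is the single vertex $\id E$, yielding a tripod realizing the mixture.

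The main obstacle I anticipate is bookkeeping rather than conceptual: one must be careful that the affine ``error'' terms $\delta y + \epsilon$ are exactly what is allowed when passing between a polynomial and the vertex $e(\cdot)A$ it defines (Remark~\ref{rem:13}), and that composing two substitutions $y \mapsto \beta y + \gamma$ stays within the allowed group $\{y\mapsto \beta y+\gamma : \beta\neq 0\}$ — both are routine. A second point requiring a little care is the direction (2)$\Rightarrow$(1): one must check the combinatorial tripod conditions, in particular that the three $A$-vertices $e(P_1)A$, $\id A$, $e(R)A$ are pairwise distinct (so that the branches are honest paths of two edges and not degenerate), which is where the hypothesis $\deg P_i \ge 2$ is genuinely used, since it prevents $e(R)A = \id A$ or $e(R)A = e(P_1)A$ via an affine relation. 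Once these are in place the equivalence follows by direct substitution, with no curvature or small-cancellation input needed.
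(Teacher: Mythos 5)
Your plan is correct and follows essentially the same route as the paper: normalize the tripod so its center is $\id E$ with one adjacent vertex equal to $\id A$, write the other adjacent vertices as $e(\cdot)A$'s, observe that the color of the third path is represented by a difference of polynomials, and translate back and forth with Lemma~\ref{obviousresult}, checking in the converse that the three $A$-vertices are distinct. The only cosmetic difference is that the paper writes the two non-$\id A$ vertices as $e(\widetilde P_1)A$ and $e(\widetilde P_2)A$ with $\widetilde P_i$ merely representing the same color as $P_i$ (thereby avoiding the stronger, unproved normalization that one vertex is exactly $e(P_1)A$, and also the case distinction over which of the two remaining paths carries the color of $P_2$), which you could adopt to shorten your bookkeeping.
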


\begin{proof}
(1) $\Longrightarrow$ (2).
Assume that there exists a tripod admitting the 3 colors
$(A \cap E) e(P_i) (A \cap E)$ ($1 \leq i \leq 3$).

We may assume that the center of this tripod is $\id E$ 
and that one of its branch is $\id E - \id A$.
Let $\widetilde{P}_1, \widetilde{P}_2 \in \C[y]$ be such that the 2 other branches
are
$\id E - e(\widetilde{P}_1 )A$, and $\id E - e(\widetilde{P}_2)A$ and such that
$(A \cap E) e(P_1) (A \cap E) = (A \cap E) e(\widetilde{P}_1) (A \cap E)$
and $(A \cap E) e(P_2) (A \cap E) = (A \cap E) e(\widetilde{P}_2) (A \cap E)$.
By Lemma \ref{obviousresult}, for $1 \leq i \leq 2$,
there exists $\alpha_i,\beta_i, \gamma_i,\delta_i, \epsilon_i$
with $\alpha_i \beta_i \neq 0$
such that
$\widetilde{P}_i= \alpha_i P_i( \beta_i y + \gamma_i) + \delta_i y + \epsilon_i$.

We then have $(A \cap E) e(P_3) (A \cap E) = (A \cap E) e(\widetilde{P}_3) (A \cap E)$,
where $\widetilde{P}_3=\widetilde{P}_1-\widetilde{P}_2$,
so (still by Lemma \ref{obviousresult})
this shows that $P_3$ has the desired form.\\

(2) $\Longrightarrow$ (1).
Set $\widetilde{P}_1= \alpha_1 P_1( \beta_1 y + \gamma_1)$,
$\widetilde{P}_2= - \alpha_2 P_2 ( \beta_2 y + \gamma_2)$
and 
$\widetilde{P}_3 = \widetilde{P}_1 - \widetilde{P}_2 =
\alpha_1 P_1( \beta_1 y + \gamma_1) + \alpha_2 P_2 ( \beta_2 y + \gamma_2)$.
By Lemma \ref{obviousresult},
we have
$(A \cap E) e(\widetilde{P}_i)  (A \cap E) = (A \cap E) e(P_i) (A \cap E)$
for $1 \leq i \leq 3$.
Since $ e( \widetilde{P}_2 )^{-1} e( \widetilde{P}_1 )= e( \widetilde{P}_3 ) \notin A$,
the vertices $ e( \widetilde{P}_1 )A$ and  $e( \widetilde{P}_2 )A$ are distinct.
Consider the tripod with center $\id E$
and branches $\id E - \id A$, $\id E - e( \widetilde{P}_1 )A$
and $\id E - e( \widetilde{P}_2 )A$.
Its three colors are
$(A \cap E) e(\widetilde{P}_i ) (A \cap E)$ for $1 \leq i \leq 3$.
This shows that
$(A \cap E) e(P_3) (A \cap E)$ is
a mixture of
$(A \cap E) e(P_1) (A \cap E)$ and $(A \cap E) e(P_2) (A \cap E)$. 
\end{proof}

\begin{rem} \label{symmetricform}
The second condition of Lemma \ref{algebraiccriterionofmixture}
may be written under the following symmetric form:

For $1 \leq k \leq 3$, there exists $\alpha_k, \beta_k, \gamma_k \in \C$ with $\alpha_k \beta_k \neq 0$
such that
$$\deg \hspace{-1mm} \sum_{1 \,  \leq  \, k \,  \leq  \, 3}
\alpha_k P_k (\beta_k y + \gamma_k) \hspace{2mm}  \leq  \hspace{2mm}1.$$
Therefore, the following lemma is an easy consequence of
the previous one.
\end{rem}

\begin{lem}
Consider three colors represented by  $P_1,P_2,P_3 \in
\C[y]$ which are  polynomials of degree $\geq
2$.
The following assertions are equivalent:
\begin{enumerate}
 \item the three colors $(A \cap E) e(P_i) (A \cap E)$
($i=1,2,3$) are independent;
 \item For any $i_1,i_2,i_3 \in \{ 1,2,3 \}$, if
$(A \cap E) e(P_{i_3}) (A \cap E)$ is a mixture of
$(A \cap E) e(P_{i_1}) (A \cap E)$ and $(A \cap E)
e(P_{i_2}) (A \cap E)$,
then $i_1=i_2=i_3$.
\end{enumerate}
\end{lem}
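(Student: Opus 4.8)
The plan is to prove the equivalence by reducing both assertions to the common algebraic condition appearing in Remark \ref{symmetricform}, namely that there exist $\alpha_k,\beta_k,\gamma_k\in\C$ with $\alpha_k\beta_k\neq 0$ for $1\le k\le 3$ such that $\deg\sum_{1\le k\le 3}\alpha_k P_k(\beta_k y+\gamma_k)\le 1$. The point is that Definition \ref{independentsequence} of independence, applied to the three-element family $P_1,P_2,P_3$, says precisely that whenever such $\alpha_k,\beta_k,\gamma_k$ and indices $i_1,i_2,i_3\in\{1,2,3\}$ satisfy $\deg\sum_k\alpha_k P_{i_k}(\beta_k y+\gamma_k)\le 1$, one must have $i_1=i_2=i_3$; while by Lemma \ref{algebraiccriterionofmixture} together with Remark \ref{symmetricform}, the statement "$(A\cap E)e(P_{i_3})(A\cap E)$ is a mixture of the other two" is equivalent to the existence of such $\alpha_k,\beta_k,\gamma_k$ for the triple $P_{i_1},P_{i_2},P_{i_3}$.

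First I would prove $(1)\Rightarrow(2)$. Assume the three colors are independent, and suppose $(A\cap E)e(P_{i_3})(A\cap E)$ is a mixture of $(A\cap E)e(P_{i_1})(A\cap E)$ and $(A\cap E)e(P_{i_2})(A\cap E)$ for some $i_1,i_2,i_3\in\{1,2,3\}$. By Lemma \ref{algebraiccriterionofmixture} and Remark \ref{symmetricform}, there exist $\alpha_k,\beta_k,\gamma_k\in\C$ with $\alpha_k\beta_k\neq 0$ such that $\deg\bigl(\alpha_1 P_{i_1}(\beta_1 y+\gamma_1)+\alpha_2 P_{i_2}(\beta_2 y+\gamma_2)+\alpha_3 P_{i_3}(\beta_3 y+\gamma_3)\bigr)\le 1$. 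This is exactly an instance of the left-hand side of the implication in Definition \ref{independentsequence} (with the three chosen indices $i_1,i_2,i_3$), so independence forces $i_1=i_2=i_3$, which is assertion (2).

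Next I would prove $(2)\Rightarrow(1)$, which is essentially the contrapositive unwinding of the same dictionary. Suppose (1) fails: there exist $\alpha_k,\beta_k,\gamma_k\in\C$ with $\alpha_k\beta_k\neq 0$ and indices $i_1,i_2,i_3\in\{1,2,3\}$, not all equal, with $\deg\sum_{1\le k\le 3}\alpha_k P_{i_k}(\beta_k y+\gamma_k)\le 1$. By Remark \ref{symmetricform} this is the symmetric form of condition (2) of Lemma \ref{algebraiccriterionofmixture} applied to the triple $(P_{i_1},P_{i_2},P_{i_3})$, hence $(A\cap E)e(P_{i_3})(A\cap E)$ is a mixture of $(A\cap E)e(P_{i_1})(A\cap E)$ and $(A\cap E)e(P_{i_2})(A\cap E)$; since $i_1,i_2,i_3$ are not all equal, this contradicts (2). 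Therefore (2) implies (1).

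The only genuine subtlety — and the step I would be most careful about — is the bookkeeping at the boundaries: in Lemma \ref{algebraiccriterionofmixture} the hypotheses require the polynomials to have degree $\ge 2$ (so that the "mixture" tripod genuinely exists, i.e. the vertices $e(\widetilde P_1)A$ and $e(\widetilde P_2)A$ are distinct and the $e(P_i)$ lie outside $A$), and in the degenerate case where two of the indices coincide one must check that invoking Lemma \ref{algebraiccriterionofmixture} in the symmetric form of Remark \ref{symmetricform} still makes sense — but this is harmless because the symmetric inequality is manifestly symmetric in the three indices and imposes no nondegeneracy beyond $\alpha_k\beta_k\neq0$, which is already built in. Apart from this, the proof is a direct translation between the two reformulations and requires no new computation.
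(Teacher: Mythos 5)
Your proof is correct and follows exactly the route the paper intends: the paper omits the proof of this lemma, stating in Remark \ref{symmetricform} that it is ``an easy consequence'' of Lemma \ref{algebraiccriterionofmixture} once condition (2) of that lemma is rewritten in the symmetric form, and your argument is precisely the spelled-out translation between Definition \ref{independentsequence} and that symmetric criterion in both directions.
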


\begin{defi} \label{def:C2}
We say that a hyperbolic automorphism of geometric length $2l$ satisfies condition $(C2)$
if the $l$ colors supported by its geodesic (see Example \ref{exple:fundamental}) are general and independent.
\end{defi}

In the annex we show that
this condition is  generic in a natural sense.

\begin{rem} \label{C1->C2} One could easily check that independent colors are necessarily distinct.
Therefore, condition $(C2)$ is stronger than condition $(C1)$.\\
\end{rem}

By misuse of language, we will say that three hyperbolic automorphisms $g_1, g_2, g_3$
define a \textbf{tripod} if their geodesics $\geo (g_1), \geo(g_2), \geo (g_3)$
define a tripod.

\begin{lem}\label{lem:tripod}
A tripod associated with three conjugates of a hyperbolic automorphism $f$
satisfying  condition $(C2)$ admits branches of length at most 2.
\end{lem}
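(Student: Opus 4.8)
The plan is to argue by contradiction: suppose a tripod associated with three conjugates $g_1 = \phi_1 f \phi_1^{-1}$, $g_2 = \phi_2 f \phi_2^{-1}$, $g_3 = \phi_3 f \phi_3^{-1}$ of $f$ has a branch of length $\geq 3$, and derive a violation of condition $(C2)$. First I would normalize: since $g_i = \phi_i f \phi_i^{-1}$, the geodesic $\geo(g_i) = \phi_i(\geo(f))$ carries precisely the $l$ colors of $\geo(f)$, repeated periodically (Fundamental example \ref{exple:fundamental}). After conjugating the whole tripod by a suitable element, I may assume the center $v$ of the tripod lies on $\geo(f)$ itself, and — depending on the type of $v$ — that some small path around $v$ is in standard position ($\id A - \id E$, or $e(Q)A - \id E - \id A$, etc.).

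The key step is to examine a branch of type $A-E-A$ (or longer). Any branch $\path_i \cap \path_j$ of length $\geq 3$ contains, for an appropriate choice of orientation, a subpath of type $A-E-A$ whose central $E$-vertex is adjacent to the center $v$ of the tripod — or is $v$ itself. So near a center $v$ of type $E$, the tripod exhibits three paths of type $A-E-A$ through $v$, each lying in one of the geodesics $\geo(g_i)$, and the three colors they carry are three colors of $f$ (Fundamental example again). By the definition of a tripod, one of these colors is a \textbf{mixture} of the other two, so by Lemma \ref{algebraiccriterionofmixture} (in the symmetric form of Remark \ref{symmetricform}) these three colors are \emph{not} independent — unless all three indices coincide, i.e. the three colors are literally the same color of $f$. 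But condition $(C2)$ forces the $l$ colors of $f$ to be independent, hence pairwise distinct (Remark \ref{C1->C2}); so the three colors must be the single same color $(A\cap E)e(P)(A\cap E)$, say with $P$ general. Now I invoke the \emph{generality} of $P$: a path of length $\geq 4$ centered at an $E$-vertex, carrying a general color on its two central edges, is only fixed (pointwise, or even just the two central edges preserved) by translations — this is the content of Lemmas \ref{subtreefixedbyatranslation}, \ref{stabilizer} and Remark \ref{subtreefixedbyatranslationbis}. Using this, one shows that the overlap of two copies of $\geo(f)$ through such a center cannot be long enough to produce a branch of length $\geq 3$ while the third branch still meets both: the third geodesic would be forced to agree with one of the first two on too large a segment, contradicting $g_i \neq g_j$ (or the periodic color structure). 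When $v$ is of type $A$, a parallel but easier argument applies, since a color is only visible at an $E$-vertex and the three branches through an $A$-center again reduce, after passing to an adjacent $E$-vertex, to the previous case.

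I would organize the write-up by first reducing to a center of type $E$ in standard position, then treating the case where the offending branch is itself of type $A-E-A$ (three distinct branches through $v$ of type $A-E-A$, hence three colors, one a mixture of the others, contradicting independence), and finally the case where the branch has length $\geq 3$ but the tripod center is an endpoint of that $A-E-A$ subpath — here the generality of the common color and the translation rigidity lemmas do the work. The main obstacle I anticipate is the bookkeeping in this last case: one has to be careful that "branch of length $\geq 3$" really does plant a full $A-E-A$ through a vertex adjacent to the center and that the second branch of the tripod, emanating from the same center, genuinely contributes a \emph{different} edge so that a nontrivial mixture relation (with not-all-equal indices) is produced — this is where the hypothesis that the center is a single vertex and that each pairwise intersection contains an edge must be used delicately, together with the orientation-reversal subtlety already seen in the proof of Proposition \ref{prop:geo}.
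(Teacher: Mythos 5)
Your first half is essentially the paper's argument: after disposing of a type-$A$ center via Proposition \ref{prop:geo} (every branch has the center as an endpoint, so a type-$A$ center already excludes the only length-$4$ type $E-A-E-A-E$), you correctly observe that a type-$E$ center carries three $A-E-A$ paths inside the tripod, one in each geodesic, that their colors are among the $l$ colors of $f$, and that the mixture criterion (Lemma \ref{algebraiccriterionofmixture}, Remark \ref{symmetricform}) together with independence forces these three colors to coincide. You also correctly identify that the translation-rigidity statements (Lemma \ref{subtreefixedbyatranslation}, Remark \ref{subtreefixedbyatranslationbis}) should then pin down the conjugator of the geodesic sharing a length-$4$ branch with $\geo(f)$ as a translation $(x,y+c)$. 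One step you only allude to but must actually carry out: each conjugator has first to be replaced by $\phi_i f^{k_i}$ so that it fixes the center; this uses the periodicity and distinctness of the colors exactly as in the proof of Proposition \ref{prop:geo}, and is what puts the conjugators in $E$ so that the rigidity lemmas apply.

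The genuine gap is the final contradiction. Your proposed mechanism --- ``the third geodesic would be forced to agree with one of the first two on too large a segment, contradicting $g_i\neq g_j$'' --- is not substantiated and does not reflect what actually fails: Proposition \ref{prop:geo} already caps every pairwise intersection at length $4$, and a length-$4$ branch between one pair forces no longer overlap elsewhere. The paper instead concludes by an explicit computation: writing the length-$4$ branch as $\geo(f)\cap\geo(h)$ with $h=vfv^{-1}$, $v=(x,y+c)$, and $eA=e(P)A$ the first $A$-vertex of the branch $\geo(f)\cap\geo(g)$, the conjugator $u\in E$ of $g$ must send $\id A$ to $eA$ and $eA$ to $veA$; unwinding these two conditions modulo $A\cap E$ forces $\deg\bigl(a_1P(y)+P(b_1y+c_1)-P(b_1y+c_1-c)\bigr)\le 1$, which is impossible since $a_1\neq 0$ and the difference of the two shifts of $P$ has degree $<\deg P$. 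Nothing in your sketch substitutes for this step. (For what it is worth, once $v=(x,y+c)$ with $c\neq 0$ is known there is a shortcut you could have aimed at: the third $A-E-A$ path through the center is $eA-\id E-veA$, whose color $(A\cap E)e^{-1}ve(A\cap E)$ is represented by $P(y-c)-P(y)$, a polynomial of degree $d-1$ where $d=\deg P$; by Lemma \ref{obviousresult} it cannot equal the color of $P$, contradicting the equality of the three colors you had just established. But some concrete computation of this kind is indispensable, and your write-up contains none.)
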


\begin{proof}
If the center of the tripod is of type $A$, by Proposition
\ref{prop:geo} there is nothing to do.
Assume now that the center of the tripod is of type $E$.
Without loss of generality one can conjugate and assume that
the center is $\id E$,
and that $\geo(f)$ contains the vertices $\id A$ and
$a(0)E$.
We denote by $g=ufu^{-1}$ and $h=vfv^{-1}$ the two conjugates
of $f$ involved in the tripod.

By condition $(C2)$ the three colors  centered on $\id E$ in
the tripod must be equal.
Indeed, if $(A \cap E) e(P_i) (A \cap E)$, $1 \leq i \leq l$ are the $l$ colors
supported by $\geo (f)$, then there exist $i_1,i_2,i_3 \in \{ 1, \ldots, l \}$
such that these three colors are $(A \cap E) e(P_{i_k}) (A \cap E)$, $1 \leq k \leq 3$.
By Definition \ref{independentsequence}  and Lemma \ref{algebraiccriterionofmixture}
(see also Remark \ref{symmetricform}), we get $i_1=i_2=i_3$, so that the three colors are equal.

Let us prove  that $u$ can be chosen fixing the center $\alpha = \id E$ of the
tripod. Since $\alpha \in \geo (f) \cap \geo (g) = \geo (f) \cap u ( \geo (f) )$,
we get $u^{-1}( \alpha ) \in \geo (f) $, so that there exists an integer $k$
such that $\dist ( f^k ( \alpha ) , u ^{-1} ( \alpha ) ) < \lon (f) =2l$.
Replacing $u$ by $u f^k$, we do not change $g$, but we now have
$\dist ( u ( \alpha ), \alpha ) < 2l$.
By  condition $(C1)$ (cf. Remark \ref{C1->C2}), the geodesic of $g$ carries $l$
distinct colors which are repeated periodically.
Therefore, $\dist(u ( \alpha ), \alpha ) \in 2l\Z$ and finally we get
$u( \alpha ) =\alpha$.
We would prove in the same way that $v$ can be chosen fixing $\alpha = \id E$.
In other words, we have $u,v \in E$.

Let us now assume that there exists a branch, say $\geo(f)
\cap \geo(h)$,
of length strictly greater than $2$.
Then, by Proposition \ref{prop:geo}, this branch has length $4$,
with middle point $a(0) E$ (see Fig. \ref{fig:tripod}).
Since $v$ fixes point by point the general path $\geo(f)
\cap \geo(h)$,
by Remark \ref{subtreefixedbyatranslationbis},
it can be written as $v= (x,y+c)$.

Let $e = e(P) = (x+P(y),y) \in E$ be such that
the vertex $eA \in \geo (f) \cap \geo (g)$.
Since $\geo (h) = v ( \geo (f) )$, the vertex $veA \in \geo
(h)$
and finally $veA \in \geo (g) \cap \geo (h)$.

We assume that the orientation induced by $g$ on $\id E-eA$
is opposite to the one of $f$, the other case being 
symmetric.
 
\begin{figure}[ht]
 $$\xygraph{
!{<0cm,0cm>;<1cm,0cm>:<0cm,1cm>::}
!{(-1,3)}*{\bullet} ="eamuE" 
!{(-2,4)}*{\bullet} ="eamueQA" 
!{(0,2)}*{\bullet} ="e1A" 
!{(0,0)}*{\bullet} ="e2A" 
!{(1,1)}*{\bullet} ="idE" 
!{(2.5,1)}*{\bullet} ="idA" 
!{(4,1)}*{\bullet} ="B1" 
!{(5.5,1)}*{\bullet} ="B2"
!{(7,1)}*{\bullet} ="B3"
!{(2.5,1.5)}="fi" 
!{(0.5,2)}="ff"
!{(-0.5,0.5)}="gf"
 !{(-0.5,1.5)}="gi"
!{(0.5,0)}="hf" 
!{(2.5,.5)}="hi"
"e1A"-_<{eA}"idE" 
"e2A"-^<{veA}"idE" 
"idA"-_>(0.9){\id E}_<{ \id A}"idE"
"B1"-^>{a(0)e(Q)A}"B2" "B2"-_>{a(0)e(Q)a(\lambda)E}"B3" 
"idA"-^>(0.9){a(0)E}"B1" 
"eamueQA"-"eamuE" 
"eamuE"-"e1A"
"fi"-@{.>}@/^0.2cm/_f"ff"
"hi"-@{.>}@/_0.2cm/_h"hf"
"gi"-@{.>}@/^0.2cm/_g"gf"
}$$
\caption{}
\label{fig:tripod}
\end{figure}

Since $\geo (g)= u ( \geo (f) )$,
$u$ sends the path $\id A - \id E- e A$
to the path $e A - \id E - veA$.

On one hand, $u$  sends $\id A$ to $eA$, i.e. $uA=eA$, i.e.
$e^{-1}u \in A$,
i.e. $e^{-1}u \in A \cap E$.
Since $e^{-1}u \in A \cap E$, it can be written as $s_1s_2$,
where $s_1 = (a_1 x, b_1 y + c_1), s_2 = (x + \beta y +
\gamma,y) \in A \cap E$
and we have $u = es_1s_2$.

On the other hand $u$  sends $eA$ to $veA$,
i.e. $ueA=veA$, i.e. $es_1s_2eA=veA$.
Since $s_2e=es_2$, we have $es_1s_2eA=es_1eA$,
so that $es_1eA=veA$.
This last equality is still equivalent to 
$e^{-1}v^{-1}es_1e \in A$.
We compute
$$ e^{-1}v^{-1}es_1e = (a_1 x + a_1 P(y) +P(b_1 y + c_1) -
P(b_1y +c_1-c),b_1y +c_1-c).$$
We should have $\deg ( a_1 P(y) +P(b_1 y + c_1) - P(b_1y
+c_1-c)) \leq 1$.
Since $a_1 \neq 0$ and $\deg (P(b_1 y + c_1) - P(b_1y
+c_1-c)) < \deg P$,
this is impossible.  
\end{proof}

\section{The proof of Theorem \ref{thm1}} \label{sec:proofoftheorem1}

We start by looking at the case of an automorphism of algebraic length
$\leq 1$,
i.e. a triangular or affine automorphism.
Note that similar results in the context of birational
transformations are proved in \cite{Gi} and \cite{CD}.

\begin{lem}\label{lem:length1}
If $f \in G$ satisfies $| f | \leq 1$ and $f \neq {\rm id}$,
then  $\langle f \rangle_N = G$.
\end{lem}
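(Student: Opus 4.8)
The plan is to reduce the statement to two well-understood cases — $f$ affine and $f$ elementary — and then, within each, to produce enough elements of $\langle f \rangle_N$ to exhaust $G$, using the normal generation of $G$ by $\mathrm{SL}(2,\C)$ together with elementary automorphisms $(x+P(y),y)$ (as in Proposition \ref{prop:comG}) and the fact, recorded in subsection \ref{ellipticandhyperbolicelements}, that $A$ is conjugate into $E$, so every nontrivial elliptic $f$ is conjugate to a triangular automorphism $\neq \id$. Hence from the outset one may assume $f$ triangular, say $f=(\alpha x+P(y),\beta y+\gamma)$ with $\alpha\beta\neq 0$ and $(\alpha,P,\beta,\gamma)\neq(1,0,1,0)$.

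First I would dispose of the case where $f$ has a nontrivial linear part, i.e. $(\alpha,\beta)\neq(1,1)$ or $\gamma\neq 0$ produces, after composing with a suitable translation or dilation conjugate, a diagonal or shear element of $\mathrm{SL}(2,\C)$ inside $\langle f \rangle_N$: concretely, commutators $[f,t]$ with $t$ a translation or $[f,d]$ with $d$ a diagonal automorphism lie in $\langle f\rangle_N$, and by choosing $t,d$ appropriately these commutators are nontrivial elements of $A_1$ whose normal closure is visibly all of $A_1$ (since $\mathrm{SL}(2,\C)$ is its own normal closure over any of its noncentral elements, and $A_1$ is generated by $\mathrm{SL}(2,\C)$ and translations). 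Once $\langle f\rangle_N$ contains a nontrivial element of $E_1$ as well — which it does because $A_1$ and $E_1$ together generate $G$ and one produces such an element by conjugating a linear element of $\langle f\rangle_N$ by a generalized Hénon map and taking a commutator — one invokes the normal generation statement of Proposition \ref{prop:comG} to conclude $\langle f\rangle_N=G$. Here it is cleanest to prove and use the following sub-claim: \emph{if $N\trianglelefteq G$ contains one element $(x+Q(y),y)$ with $\deg Q\geq 1$, then $N=G$}; this follows by forming commutators $[(x+Q(y),y),(bx,b^{-1}y)]$ to obtain monomial shears $(x+\lambda y^k,y)$ of every degree $k$ appearing in $Q$, then using the conjugation $y\mapsto$ (affine) to sweep out all shears, and finally combining with Proposition \ref{prop:comG}.

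The remaining case is $f=(x+P(y),y)$ a pure shear with $\deg P\geq 1$, which is exactly the sub-claim above, proved directly: conjugating by $(bx,b^{-1}y)$ for a primitive $n$-th root of unity $b$ and taking the commutator isolates the leading monomial $(x+\lambda y^n,y)$ of $P$ up to lower order terms, then one iterates downward in degree to land all monomial shears $(x+\mu y^k,y)$, $k\geq 2$, in $\langle f\rangle_N$; conjugating by the translation $y\mapsto y+c$ and by scalings produces $(x+\mu(y+c)^k,y)$, whose combinations give translations $(x+c',y)$ and arbitrary shears, and then Proposition \ref{prop:comG} (which shows $\mathrm{SL}(2,\C)$ and all such shears generate $G$) finishes it — provided one also checks $\langle f\rangle_N\supseteq \mathrm{SL}(2,\C)$, which comes from the translations just obtained via the commutator identity $(x+c,y)=[(-x,-y),(x-\tfrac{c}{2},y)]$ reversed, i.e. producing $(-x,-y)$-type and more generally diagonal and shear linear elements as commutators of translations with linear maps, all lying in the normal closure.

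The main obstacle I anticipate is the bookkeeping in the pure-shear case: extracting each monomial of $P$ from the commutator $[(x+P(y),y),(bx,b^{-1}y)]=(x+P(y)-P(b^{-1}y)\cdot\text{(correction)},y)$ is not a single clean step because the commutator mixes all the monomials at once, so one must argue by a downward induction on $\deg P$ (kill the top monomial, recognize what is left as a shear of strictly smaller degree that still lies in the normal closure, recurse), and one must be slightly careful that the base case $\deg P=1$ — a linear shear $(x+\lambda y,y)$ — is handled, since then the root-of-unity trick degenerates; for that base case one instead conjugates by a translation in $x$ or uses that a linear shear is a noncentral element of $\mathrm{SL}(2,\C)$ and appeals directly to the simplicity of $\mathrm{PSL}(2,\C)$. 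Everything else is a routine assembly of commutator identities already essentially present in the proof of Proposition \ref{prop:comG}.
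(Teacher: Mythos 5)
Your overall architecture --- reduce to a triangular $f$, lower the degree of the polynomial part by commutators, and finish with the near-simplicity of $\sld$ together with the generation of $G$ by $\sld$ and the shears --- is the same circle of ideas as the paper's proof, and most of your steps can be carried out. But one step, which your argument cannot avoid, is wrong as stated: the passage from ``$\langle f \rangle_N$ contains a nontrivial translation'' to ``$\langle f \rangle_N \supseteq \sld$''. You propose to obtain ``diagonal and shear linear elements as commutators of translations with linear maps''; however, if $t$ is the translation by a vector $v$ and $L$ is linear (or even affine), then $tLt^{-1}L^{-1}$ is again a translation, namely by $v - Lv$. The translations form a normal subgroup of the affine group, so no amount of commutation or conjugation by affine maps can ever take you out of the translation subgroup: to escape it you must conjugate by a non-affine element. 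Likewise, ``reversing'' the identity $(x+c,y)=[(-x,-y),(x-\frac{c}{2},y)]$ gives nothing: knowing that an element of $N$ is a commutator of two automorphisms does not place either of those automorphisms in $N$. This gap matters because the translation case is unavoidable ($f=(x+1,y)$ itself satisfies $|f|=0\leq 1$), and because your downward degree induction in the pure-shear case bottoms out either at a degree-one shear or at a translation.

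The fix is exactly the paper's single non-affine commutator: once all translations lie in $N$ (which you do obtain correctly, by conjugating one translation by elements of $\sld$), the element $[(x,y+1),(x+y^2,y)]=(x-2y+1,y)$ lies in $N$, and composing with the translation $(x-1,y)\in N$ gives the noncentral element $(x-2y,y)$ of $\sld$; from there your appeal to the fact that $\{\pm \id\}$ is the only proper nontrivial normal subgroup of $\sld$, and the commutators $(x+\lambda y^n,y)=[(x+\lambda(1-b)^{-1}y^n,y),(bx,b^{-1}y)]$ with $(bx,b^{-1}y)\in\sld\subseteq N$, finish the proof as you intend. A secondary remark: your degree-lowering device (root-of-unity commutators, which multiply the coefficient of $y^k$ by $1-b^{k+1}$ and therefore stall precisely on monomials, forcing the case distinction you describe) is more delicate than the paper's, which simply commutes $(x+P(y),y)$ with $(x,y+1)$ to replace $P(y)$ by $P(y-1)-P(y)$, a polynomial of degree exactly $\deg P-1$; this one finite-difference operator treats monomials and non-monomials alike and terminates at a nontrivial translation.
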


\begin{proof}
Let $g,h \in G$. Note that if $g$ or $h$ belongs to $\langle f
\rangle_N$, then so does the commutator $[g,h] = ghg^{-1}h^{-1}$.
We show that $G = \langle f \rangle_N$ by making the
following observations:

\begin{itemize}
 \item If $f \in \sld$ and $f \neq \pm \id$, we obtain $\sld
\subseteq \langle f \rangle_N$.
We used the fact that $\{ \pm \id \}$ is the unique nontrivial normal subgroup of $\sld$.
Indeed, if $H$ is a normal subgroup of $\sld$ not included
into $\{ \pm \id \}$,
we get $\sld = H \cup (-H)$ by simplicity of $\psld$.
Therefore, if $g=(y,-x)$, we get $g \in H$ or $-g \in H$,
so that $-\id= g^2=(-g)^2 \in H$ and finally $H=\sld$.

Now, if $\alpha, \beta \in \C$, we get
$$ [(x+\alpha, y+ \beta) , (-x,-y)] = (x+ 2 \alpha, y + 2
\beta) $$
so that $A \subseteq \langle f \rangle_N$.

If $b \neq 1$ is a $n$-th root of the unity ($n \geq 2$) and
$\lambda \in \C$, we get
$$ [(x+ \lambda (1-b)^{-1}y^n, y) , (bx,b^{-1}y)] = (x+
\lambda y^n,y)$$
and we are done.

\vskip1mm

\item If $f$ is a translation, then, conjugating by $\sld$,
 we see that $\langle f \rangle_N$ contains all 
translations.
So, it contains  the commutator
$$[(x,y+1), (x+y^2,y)] = (x-2y+1,y)$$
and also the linear automorphism $(x-2y,y)$.
We conclude by the previous case.

\vskip1mm

\item If $f$ is an  affine automorphism which is not a
translation,
then there exists a translation $g$ which does not commute
with $f$.
Therefore, the commutator $[f,g]$ is a nontrivial
translation
belonging to $\langle f \rangle_N$ and we conclude by the
previous case.

\vskip1mm

\item Finally if $f = (ax+P(y),a^{-1}y+c)$ is a triangular
non affine automorphism,
then, up to replacing $f$ by $[f,g]$,
where $g$ is a triangular automorphism non commuting with
$f$, we may assume that
$a=1$.
Still replacing  $f$ by $[f,g]$,
where $g$ is a triangular automorphism non commuting with
$f$,
we may even assume that $c=0$.
Therefore, $f$ is of the form $(x+P(y),y)$.
Remark then that the  commutator 
$$[(x,y+1),(x+P(y),y)]$$ 
is a triangular automorphism of the form $(x+R(y),y)$, with
$\deg R = \deg P -1$.
By induction on the degree we obtain the existence of a
nontrivial translation $(x+c,y)$
in $\langle f \rangle_N$.
This case has already been done. 
\end{itemize} 
\end{proof}

\begin{cor} \label{cor:elliptic}
If $f \in G$ is elliptic (i.e. triangularizable) and $f \neq {\rm id}$,
then  $\langle f \rangle_N = G$.
\end{cor}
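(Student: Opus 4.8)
The plan is to reduce the statement to Lemma \ref{lem:length1} by a conjugation argument. Since $f$ is elliptic it has $\lon(f) = 0$, i.e. it fixes a vertex of $\T$; by transitivity of the action of $\Aut[\C^2]$ on the vertices of a given type, this vertex can be written $\psi A$ or $\psi E$ for some $\psi \in \Aut[\C^2]$. As the stabilizer of $\psi A$ (resp. $\psi E$) is $\psi A \psi^{-1}$ (resp. $\psi E \psi^{-1}$), the element $t := \psi^{-1} f \psi$ lies in the factor $A$ (resp. $E$), so that $|t| \leq 1$. This is just the concrete form of the fact, recalled in subsection \ref{ellipticandhyperbolicelements}, that an elliptic element is conjugate to an element of a factor of the amalgam.

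Next I would check that $t$ still satisfies the hypotheses of Lemma \ref{lem:length1}. The Jacobian determinant defines a group homomorphism $\Aut[\C^2] \to \C^*$ whose kernel is exactly $G$; since $\C^*$ is abelian, conjugate elements share the same Jacobian determinant, hence $t$ has Jacobian determinant $1$, i.e. $t \in G$. Moreover $t \neq \id$ because $f \neq \id$. Therefore $t \in G$, $|t| \leq 1$ and $t \neq \id$, so Lemma \ref{lem:length1} gives $\langle t \rangle_N = G$.

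Finally I would conjugate back. Because $G$ is a normal subgroup of $\Aut[\C^2]$, conjugation by $\psi$ restricts to an automorphism of $G$; it carries the normal closure of $t$ in $G$ onto the normal closure of $\psi t \psi^{-1} = f$ in $G$, that is, it sends $\langle t \rangle_N$ to $\langle f \rangle_N$. Hence $\langle f \rangle_N = \psi \langle t \rangle_N \psi^{-1} = \psi G \psi^{-1} = G$, which is the desired conclusion.

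The only subtlety to watch is that $\langle \cdot \rangle_N$ means normal closure \emph{inside $G$}, not inside $\Aut[\C^2]$, while the conjugating element $\psi$ need not belong to $G$; this is harmless precisely because $G$ is normal in $\Aut[\C^2]$, so conjugation by $\psi$ is an automorphism of $G$ transporting $\langle t \rangle_N$ to $\langle f \rangle_N$. Everything else is an immediate consequence of Lemma \ref{lem:length1}, so there is no real obstacle here; the corollary is essentially a bookkeeping reduction.
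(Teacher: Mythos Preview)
Your proof is correct and is precisely the intended argument: the paper states the corollary without proof, as an immediate consequence of Lemma~\ref{lem:length1} via conjugation. One minor simplification is that by Proposition~\ref{prop:jungG} (and the remark following it) the action of $G$ itself on $\T$ is already transitive on vertices of each type, so you may take $\psi \in G$ and avoid the normality discussion in your last paragraph; but your handling of that point is also perfectly valid.
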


We are now ready to prove Theorem \ref{thm1}.
In fact, we will prove the following stronger and more
geometric version:

\begin{thm} \label{thm:A}
If $f \in G$ satisfies $\lon (f) \leq 8$ and $f \neq {\rm
id}$,
then  $\langle f \rangle_N = G$.
\end{thm}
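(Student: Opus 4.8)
The plan is to leverage Corollary \ref{cor:elliptic}: it suffices to show that if $f \in G$ is hyperbolic with $\lon(f) \le 8$, then $\langle f \rangle_N$ contains a nontrivial elliptic element. By Corollary \ref{cor:elliptic} this forces $\langle f \rangle_N = G$. Since $\lon(f)$ is automatically even for a hyperbolic element, we have $\lon(f) \in \{2,4,6,8\}$. Up to conjugation (which does not change $\langle f\rangle_N$), we may assume $f = a_1 e_1 \cdots a_l e_l$ with $a_i \in A \setminus E$, $e_i \in E \setminus A$, and $2l = \lon(f) = |f| \le 8$, so $l \le 4$; moreover $\geo(f)$ is the path $\id A - a_1 E - a_1 e_1 A - \cdots - a_1 e_1 \cdots a_l e_l A$ translated by powers of $f$.

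The key device is the following: if $t = (x+c,y)$ is a translation with $c \ne 0$ and $v$ is any vertex in the fixed subtree of $t$ (which by Lemma \ref{subtreefixedbyatranslation} has diameter $6$ and contains the closed ball of radius $2$ about $\id E$), then for the commutator $[f, \phi t \phi^{-1}]$ with $\phi$ chosen so that $\phi$ maps $\id E$ to a well-placed vertex, one can arrange that $f$ and the conjugate translation $\phi t \phi^{-1}$ have overlapping fixed/moved regions in a controlled way. Concretely, I would choose a translation-type elliptic element $s$ whose fixed subtree $T_s$ is large (diameter $6$), placed so that $T_s$ meets $\geo(f)$ in a long segment; then $f s f^{-1}$ has fixed subtree $f(T_s)$, and $[f,s] = (f s f^{-1}) s^{-1}$. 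The point is to show that when $\lon(f) \le 8$, the two subtrees $T_s$ and $f(T_s)$ (each of diameter $6$) cannot be "too far apart" along $\geo(f)$, forcing $f(T_s) \cap T_s$ to contain an edge; then $[f,s]$ fixes that edge — more precisely, $s^{-1}$ fixes $T_s$ and $f s f^{-1}$ fixes $f(T_s)$, so on the intersection the commutator acts with bounded displacement — which makes $[f,s]$ elliptic (displacement $\le$ diameter bound $< \lon(f)$), hence $\lon([f,s]) = 0$ once we check $[f,s]\neq\id$. Since translations lie in $G$ and $[f,s] \in \langle f \rangle_N$, Corollary \ref{cor:elliptic} finishes the argument — provided $[f,s] \neq \id$, which must be verified case by case (the genuinely hyperbolic case is where $f$ does not commute with a suitable translation, and one checks directly using the explicit action that the commutator is nontrivial).

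The main obstacle will be the bookkeeping in the case $\lon(f) = 8$ (so $l = 4$): here $\geo(f)$ has period $8$, and the diameter-$6$ fixed subtree of a translation, together with its $f$-translate, must still be shown to overlap in at least one edge. Since $6 + 6 > 8$ but the subtrees are not segments, this requires a careful look at the structure of $T_s$ from Lemma \ref{subtreefixedbyatranslation} — in particular the fact that $T_s$ contains the full radius-$2$ ball around its central $E$-vertex, so that positioning the central $E$-vertex of $T_s$ on $\geo(f)$ forces $T_s$ to contain a segment of length $4$ of $\geo(f)$ (two edges on each side), and likewise $f(T_s)$; two such length-$4$ subsegments of a bi-infinite geodesic whose "centers" are at distance $\le \lon(f)/2 \cdot k$ apart must overlap when $\lon(f) \le 8$ with the right choice of $k$. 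Getting this overlap to contain an actual edge (not just a vertex), and simultaneously ensuring $[f,s] \neq \id$, is the crux; the smaller lengths $2,4,6$ are strictly easier and can be dispatched by the same mechanism (or by reducing to Lemma \ref{lem:length1} after one commutator). I would organize the proof as a short case analysis on $l \in \{1,2,3,4\}$, each time producing an explicit elliptic element in $\langle f\rangle_N$ and invoking Corollary \ref{cor:elliptic}.
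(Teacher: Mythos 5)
Your overall strategy -- conjugate $f$ by an element $\psi$ whose fixed subtree contains a $4$-edge subpath of $\geo(f)$, observe that $f\,(\psi f\psi^{-1})^{-1}$ is the commutator $[f,\psi]\in\langle f\rangle_N$, and push its length down until Corollary \ref{cor:elliptic} applies -- is exactly the paper's strategy, and it does dispose of the cases $\lon(f)\in\{2,4,6\}$ essentially as you describe. But there is a genuine gap at $\lon(f)=8$, which is precisely where the real content of the theorem lies. Your key claim there is that $T_s$ and $f(T_s)$ must overlap because ``$6+6>8$''. This is false: by Lemma \ref{subtreefixedbyatranslation} the fixed subtree of a translation conjugate is contained in the closed ball of \emph{radius} $3$ about its central $E$-vertex, so if that centre sits at a vertex $c$ of $\geo(f)$ then $T_s\subseteq B(c,3)$ and $f(T_s)\subseteq B(f(c),3)$ with $\dist(c,f(c))=8$; since $3+3<8$ these are disjoint, and no choice of the power $k$ helps (it only increases the distance between the centres). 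The fallback of measuring lengths fares no better: the axes of $f$ and $g=\psi f\psi^{-1}$ share between $4$ and $6$ edges, so one only gets $\lon(fg^{-1})\le 2\cdot 8-2\cdot 4=8$, i.e.\ no reduction at all.

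The paper closes this gap with an idea absent from your proposal: a descending induction on the degree of the triangular factors. Writing $f=e_1a_1e_2a_2e_3a_3e_4a_4$ with $\deg e_1$ minimal, it conjugates the translation to $\tilde e_1=e_1a_1(x+1,y)a_1^{-1}e_1^{-1}$, which fixes the radius-$2$ ball about $e_1a_1E$ and satisfies $\deg\tilde e_1=\deg e_1-1$, and computes explicitly $h=(\tilde e_1f\tilde e_1^{-1})^{-1}f=\tilde e_1a_4^{-1}e_4^{-1}a_3^{-1}\tilde e_3a_3e_4a_4$ with $\deg\tilde e_3=\deg e_3-1$. Either some $\tilde e_i$ becomes affine, in which case $\lon(h)\le 6$ and the earlier cases apply, or $h$ again has length $8$ but with a strictly smaller first triangular degree, and one iterates. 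You would need to supply this (or an equivalent) mechanism for the length-$8$ case; the fixed-tree-overlap argument alone cannot do it.
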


\begin{proof} 
The crucial fact we use here is the knowledge of the subtree
fixed by translations  $(x+c,y)$.
We know that this subtree is of diameter 6, centered in $\id
E$,
and that the closed ball of radius 2 and center $\id E$ is
contained in this subtree
(see Lemma \ref{subtreefixedbyatranslation}).
In consequence, given an arbitrary path of type $E-A-E-A-E$,
there exists a conjugate $\psi$ of  $(x+1,y)$  fixing this
path point by point. 

Let us choose such a path contained in the geodesic of $f$
and let us set $g= \psi f \psi^{-1}$. Then if $\lon (f) = 2$
or $4$ it is clear that $f \circ g^{-1}$ is  elliptic,
so we can conclude by Corollary \ref{cor:elliptic}. If $\lon (f)
= 6$
then $ \lon (f \circ g^{-1} ) \le 4$ so we are done by the
previous case.

The case where $\lon (f) =8$ is more subtle and we  have to
refine the above argument.
Replacing $f$ by one of its conjugates, we may assume $|f| =
\lon(f) =  8$. We can then assume
(maybe replacing $f$ by $f^{-1}$) that 
$$f = e_1 a_1 e_2 a_2 e_3 a_3 e_4 a_4$$
where $a_i \in A\setminus E$, $e_j \in E\setminus A$.
Without loss of generality
we can further assume that each $e_j$ is of the form $e_j =
e(P_j) = (x+P_j(y), y)$
and that $\deg(e_1) \le \deg(e_j)$ for $j = 2,3,4$.

We know that any translation $(x+c, y)$ fixes the closed
ball of radius 2 and center $\id E$.
Note also that for any $s \in A \cap E$,
$s (x+1,y) s^{-1}$ is still a translation of the form
$(x+c,y)$. 
In consequence, if we write  $e_1a_1$ under the form $e_1a_1
= e(P)\al s$ with $s \in A \cap E$, the automorphism 
\begin{eqnarray*}
\tilde{e}_1 &=&  e_1 a_1 (x+1,y) a_1^{-1} e_1^{-1} \\
&=&  (x+P(y),y) \circ (\lambda x+y,-x) \circ (x+c, y) \circ
(-y,\lambda y+x) \circ (x-P(y),y)\\
&=&  (x+\lambda c+P(y-c)-P(y),y-c)
\end{eqnarray*}
fixes the closed ball of radius 2 and center $e_1a_1E$.
Note that ${\rm deg}\, \tilde{e}_1= {\rm deg}\,  e_1 -1$.
Consider  
$$g  =  \tilde{e}_1 f \tilde{e}_1^{-1} \mbox{ and } h =
g^{-1}  f.$$
By construction the geodesics $\geo(g)$ and $\geo(f)$ have
at least 4 edges in common.
By Lemma \ref{subtreefixedbyatranslation} we also know that
they have at most 6 edges in common.
Then we can check (see Fig. \ref{fig:long8})
that $h$ sends the vertex $v= a_4^{-1}e_4^{-1}a_3^{-1}E$
to a vertex at distance at most 8 (and at least 6) of $v$.
Explicitly, one can compute 
$$h = \tilde{e}_1 a_4^{-1} e_4^{-1} a_3^{-1}  \tilde{e}_3 
a_3 e_4 a_4 $$
where $ \tilde{e}_3  = e_3^{-1} a_2^{-1}  (x-1,y) a_2 e_3$
is a triangular automorphism
with $\deg(\tilde{e}_3 ) = \deg(e_3) -1$.

If $\deg(\tilde{e}_1 ) = \deg(\tilde{e}_3) =1$ then $\lg(h)
= 4$.
This corresponds to the case when $\geo(g)$ and $\geo(f)$
share 6 edges.
Note that $a_3^{-1}  \tilde{e}_3  a_3$ and $a_4  \tilde{e}_1
 a_4^{-1}$
are indeed non triangular affine automorphisms.

If $\deg(\tilde{e}_1 ) = 1$ and $\deg(\tilde{e}_3) \ge 2$
then $\lg(h) = 6$.
In this case $\geo(g)$ and $\geo(f)$ share 5 edges : the
vertices $\id A$ and $\tilde{e}_1A$ coincide.

In the two cases above we are done by the  first part of the
proof.

Finally if $\deg(\tilde{e}_1 ) \ge 2$
then $h$ admits a factorization similar to the one of the
$f$ we started with 
except that the first triangular automorphism has a strictly
smaller degree.
By induction, we can produce an element of length $8$ in
$\langle f \rangle_N$ 
with the first triangular automorphism of degree $2$, and we
are done by the previous argument.  
\end{proof}

\begin{figure}
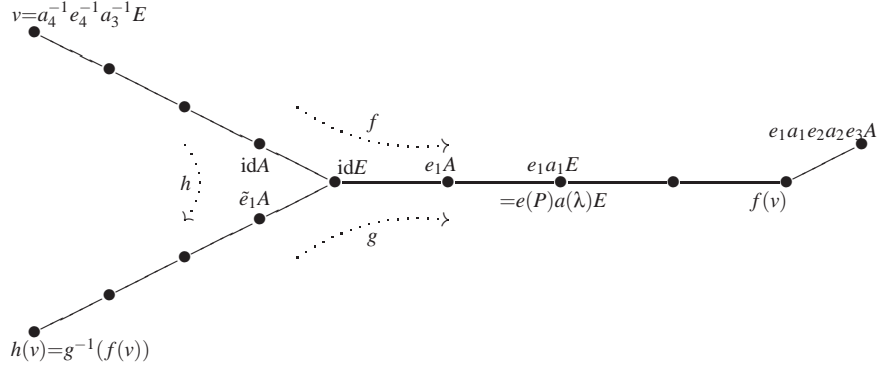

$$\xygraph{
!{<0cm,0cm>;<1cm,0cm>:<0cm,1cm>::}
!{(-1,2)}*{\bullet} ="S3" 
!{(-2,2.5)}*{\bullet} ="S2"
 !{(-3,3)}*{\bullet} ="S1" 
!{(0,1.5)}*{\bullet} ="idA"
!{(0,0.5)}*{\bullet} ="e1tildeA" 
!{(1,1)}*{\bullet} ="idE"
!{(2.5,1)}*{\bullet} ="e1A" 
!{(4,1)}*{\bullet} ="e1a1E"
!{(5.5,1)}*{\bullet} ="S4"
!{(7,1)}*{\bullet} ="S5"
!{(8,1.5)}*{\bullet} ="S55"
!{(-1,0)}*{\bullet} ="S6"
!{(-2,-0.5)}*{\bullet} ="S7"
!{(-3,-1)}*{\bullet} ="S8"
!{(2.5,1.5)}="ff"
!{(0.5,2)}="fi"
!{(-1,0.5)}="hf"
!{(-1,1.5)}="hi"
!{(0.5,0)}="gi"
!{(2.5,.5)}="gf"
"idA"-_<{\id A}"idE"
"e1tildeA"-^<{\tilde{e}_1A}"idE" 
"e1A"-_>(0.9){ \id E}_<{e_1A}"idE" 
"e1a1E"-_<{e_1a_1E}^<{=e(P)\al E}"e1A" 
"S1"-^<{v = a_4^{-1}e_4^{-1}a_3^{-1}E}"S2" 
"S2"-"S3"
"S4"-_>(0.9){f(v)}"S5" 
"S5"-^>{e_1a_1e_2a_2e_3A}"S55"
"S6"-"S7"
"S8"-_<{h(v)=g^{-1}( f(v))}"S7"  
"S3"-"idA"
"e1a1E"-"S4"
"e1tildeA"-"S6" 
"fi"-@{.>}@/_0.2cm/^f"ff"
"hi"-@{.>}@/^0.2cm/_h"hf"
"gi"-@{.>}@/^0.2cm/_g"gf"
}$$ 
\caption{Proof of Theorem \ref{thm:A}}
\label{fig:long8}
\end{figure}

\section{R-diagrams} \label{Lyn-Sc}

\subsection{Generalities on small cancellation theory}
\label{smallcancellationtheory}

In this subsection we consider $H=H_1 *_{H_1\cap H_2} H_2$ a
general
amalgamated product of two factors.
Of course our motivation is to apply the theory to the group
Aut$[\C^2]$ of plane automorphisms.

The following definitions are taken from \cite{LS}, chap. V,
\S 11 (p. 285).
If $u$ is an element of $H$, not in the amalgamated part
$H_1\cap H_2$, a {\bf normal form} of $u$ is any sequence
$x_1 \cdots x_m$ such that $u=x_1 \cdots x_m$,
each $x_i$ is in a factor of $H$,
successive $x_i$ come from different factors of $H$,
and no $x_i$ is in the amalgamated part.
The {\bf length} of $u$ is defined by
$|u |= m$.
This definition does not depend on the chosen normal form,
but only on $u$.
If $u$ is in the amalgamated part of $H$, by convention we
set  $|u|=0$.

We call a \textbf{word} an element $u\in H$ given with a
factorization $u = u_1 \cdots u_k$, where $u_i \in H$ for $i
= 1, \cdots,k$.  
A word $u = u_1 \cdots u_k$ is said to have {\bf reduced
form} if $|u_1 \cdots u_k|= |u_1| + \cdots + |u_k|$.

Suppose $u$ and $v$ are elements of $H$ with normal forms
$u=x_1 \cdots x_m$ and $v=y_1 \cdots y_n$.
If $x_m y_1$ is in the amalgamated part,
we say that there is \textbf{cancellation} between $u$  and
$v$
in forming the product $uv$.
Equivalently, this means that $|uv| \leq |u| + |v| -2$.
If $x_m$ and $y_1$ are in the same factor of $H$
and $x_my_1$ is not in the amalgamated part, we say that
$x_m$ and $y_1$ are
\textbf{consolidated} in forming a normal form of $uv$.
Equivalently, this means that $|uv| = |u| + |v| -1$.

A word is said to have {\bf semi-reduced form} $u_1 \cdots
u_k$
if there is no cancellation in this product.
Consolidation is expressly allowed.

A word $u=x_1 \cdots x_m$ in normal form is {\bf strictly}
(resp. {\bf weakly})
{\bf cyclically reduced}\label{def:reduced} if $m \leq 1$
or if $x_m$ and $x_1$ are in different factors of $H$
(resp. the product $x_m x_1$ is not in the amalgamated part).
These two notions correspond to 
the two sets of equivalent conditions given in 
Lemma   \ref{strictlyandweaklycyclicallyreducedelements}

A subset $R$ of $H$ is {\bf symmetrized} if all elements of
$R$ are weakly cyclically reduced
and for each $r \in R$, all weakly cyclically reduced
conjugates of both $r$ and $r^{-1}$ belong to $R$.

If $f$ is strictly cyclically reduced, $R(f)$ denotes the
symmetrized set generated by $f$,
i.e. the smallest symmetrized set containing $f$. It is
clear that $R(f)$ is equal
to the set of conjugates of $f^{\pm 1}$ of length $\leq | f
| + 1 $.\\

We now discuss briefly the condition $C'(\lambda)$
(mostly used with $\lambda = 1/6)$.
We do not need this notion in our construction,
but this was the original setting where the notion of
R-diagram (see next subsection) was introduced. 
Let $R$ be a symmetrized subset of $H$.
A word $b$ is said to be a \textbf{piece} (relative to $R$)
if there exists distinct elements
$r_1,r_2$ of $R$ such that $r_1=bc_1$ and $r_2=bc_2$ in
semi-reduced form.

\begin{lem} \label{lem:C'1/6}
If $ 0 < \lambda < 1$ and $\forall\, r \in R$, $|r| > 1/
\lambda$, the following assertions are equivalent:
\begin{enumerate}
 \item  If $r \in R$ admits a  semi-reduced form $r=bc$,
where $b$ is a piece of $R$, then $| b | < \lambda | r | $;
 \item $\forall \, r_1,r_2 \in R$ such that $r_1 r_2 \neq
1$,
$| r_1 r_2 | > | r_1 | + | r_2| - 2 \lambda \min \{ | r_1|,
|r_2 | \} + 1$.
\end{enumerate}
\end{lem}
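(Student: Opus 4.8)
The plan is to prove the equivalence of (1) and (2) in Lemma~\ref{lem:C'1/6} by unwinding the definitions of \emph{piece} and \emph{semi-reduced form}, and translating the combinatorics of normal forms into the length inequalities. Throughout I would fix $0<\lambda<1$ and assume every $r\in R$ has $|r|>1/\lambda$, i.e. $\lambda|r|>1$, so that the ``$+1$'' terms in the statement are harmless roundings.

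First I would prove $(2)\Rightarrow(1)$. Suppose $r\in R$ has a semi-reduced form $r=bc$ with $b$ a piece, so there are distinct $r_1,r_2\in R$ with $r_1=bc_1$, $r_2=bc_2$ in semi-reduced form. Then $b^{-1}=c_1r_1^{-1}=c_2r_2^{-1}$, and since $R$ is symmetrized, $r_1^{-1},r_2^{-1}\in R$ and are weakly cyclically reduced; moreover $r_1\neq r_2$ forces $r_1^{-1}r_2\neq 1$. Applying the hypothesis (2) to the pair $r_1^{-1},r_2$ — after checking that $r_1^{-1}r_2 = c_1^{-1}b^{-1}bc_2 = c_1^{-1}c_2$ has length at most $|c_1|+|c_2|$ because the $b^{-1}b$ cancels — I get
$$|c_1|+|c_2| \ge |r_1^{-1}r_2| > |r_1|+|r_2| - 2\lambda\min\{|r_1|,|r_2|\} + 1.$$
Using $|r_i| = |b| + |c_i|$ (semi-reduced forms are additive, up to the bookkeeping of possible consolidation, which I will track carefully), this rearranges to $2|b| < 2\lambda\min\{|r_1|,|r_2|\} - 1 \le 2\lambda|r|$ once one notes $\min\{|r_1|,|r_2|\}\le |r|$ or handles the general piece directly; hence $|b|<\lambda|r|$.

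Next, $(1)\Rightarrow(2)$. Take $r_1,r_2\in R$ with $r_1r_2\neq 1$, and let $b$ be the maximal common prefix, so that after cancellation $r_1=b c_1$ and $r_2^{-1}=b c_2$ in semi-reduced form, with the product $r_1r_2$ equal to $c_1 c_2^{-1}$ in semi-reduced form (no further cancellation, by maximality of $b$). Then $b$ is a piece relative to $R$ provided $r_1\neq r_2^{-1}$, which holds since $r_1r_2\neq 1$; moreover $r_2^{-1}\in R$ because $R$ is symmetrized. Applying (1) to both $r_1$ and $r_2^{-1}$ gives $|b|<\lambda|r_1|$ and $|b|<\lambda|r_2^{-1}|=\lambda|r_2|$, hence $2|b|<2\lambda\min\{|r_1|,|r_2|\}$. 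Now $|r_1r_2| = |c_1|+|c_2^{-1}| = (|r_1|-|b|)+(|r_2|-|b|) = |r_1|+|r_2|-2|b| > |r_1|+|r_2| - 2\lambda\min\{|r_1|,|r_2|\}$, and absorbing the ``$+1$'' using $\lambda|r_i|>1$ finishes the argument.

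The main obstacle will be the careful treatment of \emph{consolidation} versus pure cancellation: when two normal forms are juxtaposed, the boundary syllables $x_my_1$ may lie in the same factor without being in the amalgamated part, so lengths add with a $-1$ rather than exactly, and ``semi-reduced'' explicitly permits this. I would handle it by working with the convention from subsection~\ref{smallcancellationtheory} that in a semi-reduced word there is no cancellation, and by systematically using the characterizations ``$|uv|=|u|+|v|$ if no cancellation and no consolidation, $|uv|=|u|+|v|-1$ if consolidation, $|uv|\le|u|+|v|-2$ if cancellation''. The small discrepancies introduced by consolidation at the seams are exactly what the ``$+1$'' in condition (2) and the strict inequalities in (1) are designed to absorb, given the standing assumption $|r|>1/\lambda$; making this bookkeeping precise is the only genuinely delicate point, and it is the reason the lemma is stated with those particular constants.
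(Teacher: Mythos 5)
Your overall strategy---relating the piece $b$ to the cancellation in a product of two elements of the symmetrized set and then counting lengths---is the same as the paper's. But the point you defer as ``bookkeeping to be tracked carefully'' is the entire content of the lemma, and it is not, as you assert, automatically absorbed by the ``$+1$'' and the hypothesis $|r|>1/\lambda$. Concretely: in $(2)\Rightarrow(1)$ the decompositions $r_1=bc_1$, $r_2=bc_2$ are only semi-reduced, so in the worst case $|c_i|=|r_i|-|b|+1$ for \emph{both} $i$ (nothing forbids consolidation at both seams); your chain of inequalities then yields only $|b|<\lambda\min\{|r_1|,|r_2|\}+\frac12$, which fails to give the strict bound $|b|<\lambda|r|$ when $\lambda\min\{|r_1|,|r_2|\}$ is an integer equal to $|b|$. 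Symmetrically, in $(1)\Rightarrow(2)$, maximality of the common initial segment $b$ rules out further \emph{cancellation} between $c_1^{-1}$ and $c_2$ but not \emph{consolidation}; if both decompositions happen to be reduced and the quotient consolidates, you get $|r_1r_2|=|r_1|+|r_2|-2|b|-1$, which is $2$ short of the bound required by (2). The hypothesis $|r|>1/\lambda$ only disposes of the no-cancellation case and the case $b=1$; it does nothing for these discrepancies.

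What is missing is the normalization the paper isolates as its Claim: given semi-reduced expressions $r_1=bc_1$, $r_2=bc_2$ with $b\neq 1$ and $r_1\neq r_2$, one can produce $b',c_1',c_2'$ with $|b'|\geq |b|$ such that $r_1=b'c_1'$ and $r_2=b'c_2'$ are semi-reduced, \emph{exactly one} of them is reduced, and $(c_1')^{-1}c_2'$ is reduced. With that normalization the count gives exactly $|r_1^{-1}r_2|=|r_1|+|r_2|-2|b'|+1$, and both implications then follow with the stated constants. You need to state and prove such a claim (essentially: push a consolidated syllable into $b$ on one side and check the effect on the other side and on the quotient); without it the constants in (1) and (2) do not match. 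Two smaller slips to fix: in $(1)\Rightarrow(2)$, writing $r_1=bc_1$ and $r_2^{-1}=bc_2$ gives $r_1r_2=bc_1c_2^{-1}b^{-1}$, not $c_1c_2^{-1}$---you want $b$ to be a common initial segment of $r_1^{-1}$ and $r_2$, using that $R$ is symmetrized; and in $(2)\Rightarrow(1)$ the inequality $\min\{|r_1|,|r_2|\}\leq |r|$ is false for arbitrary witnesses of the piece, so you must first replace one witness by $r$ itself, which is legitimate because $r=bc$ is semi-reduced.
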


\begin{proof} The equivalence is easily obtained from the following
claim.

Let $r_1=bc_1$ and $r_2=bc_2$ be semi-reduced expressions
with $b \neq 1$ and $r_1 \neq r_2$.

\vskip1mm

\noindent \underline{Claim.} There exists $b', c'_1, c'_2$
such that:
\begin{enumerate}[a)]
\item  the equalities $r_1=b'c'_1$ and $r_2=b'c'_2$ hold;
\item these expressions are semi-reduced;
\item exactly one of these expressions is reduced;
\item the expression $(c'_1)^{-1}c'_2$ is reduced;
\item $| b' | \geq | b |$.
\end{enumerate}  
\end{proof}

\begin{defi}
When the equivalent assertions of Lemma \ref{lem:C'1/6} are
satisfied,
we say that $R$ satisfies condition $C'(\lambda)$.
\end{defi}

The first assertion is the one used by Lyndon and Schupp.
The second one is used by Danilov,
except that he forgets the $+1$ in the formula.
This leads to the slight error in his statement that we
mentioned in the introduction.
Let us finish this subsection by recalling one of the main theorems
of small cancellation theory  (see \cite[Th. 11.2, p. 288]{LS}):

\begin{thm}
Let $R$ be a symmetrized subset of the amalgamated group $H$.
Suppose that $R$ satisfies condition $C'( \lambda)$ with
$\lambda \leq 1/6$, then the normal subgroup generated by $R$
in $H$ is different from $H$.
\end{thm}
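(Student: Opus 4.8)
The plan is to follow the classical van Kampen / R\nobreakdash-diagram argument of Lyndon and Schupp, of which this statement is \cite[Th.~11.2, p.~288]{LS}. Suppose, for contradiction, that the normal subgroup $N$ generated by $R$ is all of $H$. Since $R$ satisfies $C'(\lambda)$ with $\lambda\le 1/6<1$, Lemma~\ref{lem:C'1/6} applies and every $r\in R$ has $|r|>1/\lambda\ge 6$; in particular $R$ is nonempty and contains an element of length $\ge 1$, so $H$ contains an element $h$ with $|h|=1$ (take the first letter of a normal form of such an element). I will derive the contradiction by showing $h\notin N$.

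First I would invoke the basic correspondence: a nontrivial $w\in H$ lies in $N$ if and only if it is a consequence of $R$, and in that case there is a \emph{reduced} R\nobreakdash-diagram $M$ over $R$ — a finite, connected, simply connected planar complex whose $2$\nobreakdash-cells (regions) are labelled by elements of $R$, compatibly along edges, no two regions meeting along an edge being mutually inverse, and whose boundary cycle spells a word representing a strictly cyclically reduced conjugate of $w$. The reduction step is exactly where the symmetrization hypothesis on $R$ is used. These diagrams, adapted to the amalgamated product (where one must track the length function $|\cdot|$ and allow consolidation of adjacent syllables), are constructed in \cite[Ch.~V, \S\S 9--11]{LS}.

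The heart of the argument is then a combinatorial curvature estimate on $M$. By definition of a piece and of $C'(\lambda)$, every interior edge of $M$ carries a piece, and since pieces have relative length $<\lambda$ while $|r|>1/\lambda$ for $r\in R$, each interior region of $M$ has strictly more than $1/\lambda\ge 6$ edges on its boundary in $M$. Feeding this into Euler's formula for the planar complex $M$ (equivalently, a discrete Gauss--Bonnet count) yields Greendlinger's lemma: if $w\ne 1$ is strictly cyclically reduced and a consequence of $R$, then either $w\in R$, or some region $D$ of $M$ meets $\partial M$ along a connected boundary arc labelled by a subword $s$ of the element $r\in R$ labelling $D$, with $r=st$ in semi-reduced form and $|s|>(1-3\lambda)|r|\ge |r|/2$; in particular $w$ then has a subword of length $>|r|/2$. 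Applying this to $h$, taken strictly cyclically reduced of length $1$: if $h\in N$, then either $h\in R$, impossible since $|h|=1$ while every $r\in R$ has $|r|>6$, or $h$ has a subword of length $>|r|/2>3$, also impossible since any subword of $h$ has length $\le 1$. Hence $h\notin N$ and $N\ne H$.

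The step I expect to require genuine work rather than a bare citation is the second and third one taken together: setting up reduced R\nobreakdash-diagrams in the amalgamated-product setting — where the notions of edge and piece interact with consolidation of normal forms — and running the planar Euler-characteristic bookkeeping so that the conclusion comes out phrased in terms of the length function $|\cdot|$ used throughout the paper. All of the combinatorics is carried out in \cite[Ch.~V]{LS}; in the write-up I would state the existence of reduced R\nobreakdash-diagrams and Greendlinger's lemma precisely and refer there for their proofs, so that the argument above is complete modulo those two black boxes.
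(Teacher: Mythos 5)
The paper does not actually prove this statement: it is quoted verbatim from Lyndon and Schupp \cite[Th.~11.2, p.~288]{LS}, and your sketch is precisely the argument given there (reduced $R$-diagrams over the amalgamated product, the $C'(1/6)$ condition forcing every interior region to have degree $>6$, the Euler-characteristic count yielding Greendlinger's lemma, applied to an element of length $1$, which can be neither an element of $R$ nor contain a subword of length $>|r|/2>3$). Your outline is therefore correct and follows the same route as the paper's cited source, modulo the two black boxes you explicitly identify, both of which are carried out in \cite[Ch.~V]{LS}.
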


\subsection{Construction of an R-diagram}

The  idea of associating diagrams in the Euclidean plane to
some products in amalgamated groups appears
in  \cite{vK}. 

In 1966, Lyndon independently arrived at the same idea
and Weinbaum rediscovered van Kampen's paper
(see \cite{Ly,We} and \cite{LS}, p. 236).
For the basic definition of a \textbf{diagram}, we refer to
\cite{LS}, chap. V, \S 1, p. 235.
Here follows a quick review of this notion.

A diagram is a plane graph (or more generally a graph on an
orientable surface,
we will consider spherical diagrams in Lemma
\ref{lem:courbure}). 
Vertices are divided into two types, \textbf{primary} and
\textbf{secondary}.
Any edge joining two vertices gives rise to two
directed edges (according to the chosen directions) which we call \textbf{half-segments}.
If $e$ denotes one of these half-segments, $e^{-1}$ will refer
to the other one (obtained by reversing the direction of $e$).
The notation 'edge' will be used later on to refer to some
special unions of half-segments (see the remark on terminology
below).
A half-segment will always join vertices of different types.
By definition,  \textbf{segments} will denote some special successions of
two half-segments that we now describe.
If $e_1,\ldots,e_r$ are the half-segments arriving at some secondary vertex $v$
and taken counterclockwise, then, by definition, the segments passing through $v$
are the successive half-segments $e_i, e_{i+1}^{-1}$  and their inverses $e_{i+1}, e_i^{-1}$,
for $1 \leq i \leq r$, where $i$ and $i+1$ are taken modulo $r$.
If two successive half-segments $e,e'$ 
define a segment, the latter will be noted $ee'$.
Note that the initial and terminal vertices of
a segment have to be  primary.
By convention, each segment (resp. half-segment) has length $1$
(resp. $1/2$).
Each oriented half-segment $e$ will be labeled by an element
$\phi (e)$
belonging to a factor of Aut$[\C^2]$,
with the labels on successive half-segments at a secondary
vertex
belonging to the same factor.
The identity $ \phi (e^{-1})= \phi (e) ^{-1}$ is required.
This labelling gives a
labelling on segments, by taking $\phi(ee') =
\phi(e)\phi(e')$.
The label on an individual half-segment may be in the
amalgamated part,
but if $e,e'$ are the two half-segments of a segment, we
will usually insist that $\phi (ee')$ is not in the
amalgamated part
(in fact, there will be only one exception to this rule,
see step 4 in the proof of Theorem
\ref{constructionofM}).
We call \textbf{region} a bounded connected component of the
complement of the graph in the surface.
A \textbf{boundary cycle} of a region $D$ is a collection of
half-segments that run along the entire boundary of $D$
(say counterclockwise in the case of the plane, or in a way
compatible with the orientation in general)
with initial vertex of primary type. Similarly, a boundary
cycle of the diagram is a collection of half-segments
that run along the boundary of the diagram.
Let us note that a segment necessarily belongs to the boundary of
some region and/or to the boundary of the diagram.

Now let $f$ be an element of Aut$[\C^2]$ and consider $R(f)$
the associated symmetrized set.
We say that a diagram is a \textbf{\boldmath $R(f)$-diagram} if for
any region $D$ and any boundary cycle $e_1 \ldots e_s$ of $D$,
we have $ \phi (e_1) \ldots \phi (e_s) \in R(f)$.\\

\noindent{\bf Terminology.} Note that we use two kinds of
graph in this paper: the Bass-Serre tree and the diagrams
of Lyndon-Schupp.
In the context of the Bass-Serre tree we have already used
the term \textit{edge},
and we have called a \textit{path} the union of several
edges.
In the context of  Lyndon-Schupp diagrams, we have
\textit{segments} and \textit{half-segments}.
We call \textit{edge} in this context a connected component
of the intersection of the boundary of two regions,
which is a collection of half-segments.\\

The following result will be the key ingredient for the
proof of  Theorem  \ref{thm2}.
Its proof will occupy the rest of this subsection.

\begin{thm} \label{constructionofM}
Let $f \in G$ be a strictly cyclically reduced element of $G$
of (even) algebraic length $|f| \ge 2$.
Assume that  the normal subgroup generated by $f$ in
Aut$[\C^2 ]$ is equal to $G$.
Then there exists a planar $R(f)$-diagram $M$ such that:
\begin{enumerate}
 \item \label{point(i)} $M$ is connected and simply
connected;
 \item \label{point(ii)} The boundary of $M$ has length
$\frac12$ or $1$;
 \item \label{point(iii)} If $e_1 e_1' \ldots e_te_t'$ is
a boundary cycle of some region of $M$,
then $t= |f|$ and $\phi (e_1e_1') \ldots \phi (e_te_t')$ is
a reduced form
of a strictly cyclically reduced conjugate of  $f$.
\end{enumerate}
\end{thm}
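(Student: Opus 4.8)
The plan is to translate the algebraic hypothesis "$\langle f\rangle_N = G$ in $\Aut[\C^2]$" into the existence of a van Kampen-type diagram, and then to refine that raw diagram into one with the three extra properties. First I would pick some nontrivial element $w$ of $G$ that I will eventually want to express as a product of conjugates of $f^{\pm 1}$ — the natural choice is an element whose only normal-form expression is \emph{short} and rigid, e.g. a single generator $\al$ or a triangular automorphism $e(P)$ of small length, since $G$ is generated by $A_1$ and $E_1$ (Proposition \ref{prop:jungG}). Because $\langle f\rangle_N = G$, there is an equality $w = \prod_{i=1}^n u_i r_i u_i^{-1}$ with $r_i \in R(f)$ (replacing each conjugate of $f^{\pm 1}$ by the weakly cyclically reduced representative in $R(f)$), and the classical construction of van Kampen/Lyndon--Schupp (see \cite{vK,Ly,We} and \cite{LS}, chap.~V, \S 1) produces a connected, simply connected planar diagram whose regions are labelled by the $r_i$ and whose boundary label spells a normal form of $w$. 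This gives a first diagram realizing point \eqref{point(i)} and, by the choice of $w$, a boundary of small length; the relations in $R(f)$ ensure that each boundary cycle of a region reads an element of $R(f)$, which is almost point \eqref{point(iii)}.

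The next step is cleanup: the raw van Kampen diagram typically has "degenerate" features — regions bounded by a proper subword of a relator, spurious cancellation along interior edges, half-segments whose label lies in $A\cap E$ creating segments with trivial label, vertices of valence $\le 2$, trees of regions hanging off, etc. I would perform the standard sequence of reductions: (a) remove all regions whose boundary label is trivial in $H$ or reduces after free cancellation, absorbing them into neighbours; (b) amalgamate two regions sharing an edge along which the product of the two relators shortens, so that after finitely many moves no interior edge carries cancellation; (c) prune any subtree of the $1$-skeleton not bounding a region, which cannot affect simple connectivity but shortens the boundary; (d) if after all this the diagram is empty or a single region with trivial boundary one has to go back and re-choose $w$ so that $w\neq 1$ forces at least one genuine region. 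Steps (b) and the bookkeeping needed to guarantee that \emph{every} interior segment carries a label outside $A\cap E$ (with the single deliberate exception mentioned in the statement, arising in step 4) are where the real care is required — one must check that each reduction move preserves the $R(f)$-diagram property and the length count, using that $f$ is strictly cyclically reduced and $|f|$ is even so that the cyclic conjugates of $f^{\pm1}$ of length $\le |f|+1$ are exactly the elements of $R(f)$.

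Then comes the part specific to this statement: pinning the boundary length to exactly $\tfrac12$ or $1$, and forcing every region to be a "full" region of length $|f|$. For the boundary: choosing $w$ to be (a normal form representing) an element of algebraic length $1$ in $G$, its normal form $x_1$ has length $1$, so the boundary cycle of the cleaned diagram reads a single factor; the boundary then consists of one segment (length $1$) or, if that segment degenerates, of one half-segment (length $\tfrac12$) — this is precisely how I would arrive at the dichotomy in \eqref{point(ii)}. For \eqref{point(iii)}: after the reductions, a region whose boundary reads $r\in R(f)$ with $|r| < |f|+1$ could only occur if some of its boundary segments were "shared" in a way already excluded, or if $r$ had length $|f|+1$ with a consolidation at the basepoint — here one invokes that $|f|$ is even, so a weakly but not strictly cyclically reduced conjugate of length $|f|+1$ cannot appear as a reduced boundary label of a region of a reduced diagram; hence every region has boundary of length exactly $|f|$, spelled by a reduced form $\phi(e_1e_1')\cdots\phi(e_te_t')$ with $t=|f|$ of a strictly cyclically reduced conjugate of $f$, the segments being $e_ie_i'$. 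The main obstacle I anticipate is controlling the interplay between the amalgamated-part subtleties (cancellation vs.\ consolidation, labels landing in $A\cap E$) and the diagram reductions, i.e.\ making sure the reduction process terminates with a diagram that is simultaneously reduced in the Lyndon--Schupp sense, has no degenerate segments, and still has non-empty boundary of the prescribed tiny length; getting all three at once, rather than any two, is the delicate point.
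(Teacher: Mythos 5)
Your overall architecture coincides with the paper's: use $\langle f\rangle_N = G$ to write a short nontrivial element as a product of $n$ conjugates of elements of $R(f)$ (via Lemma \ref{reducedform}), assemble the corresponding ``lollipop'' diagrams around a base point, and then identify boundary segments until the boundary is short. But two ideas that carry the proof are missing. First, the paper takes $n$ \emph{minimal}, and this minimality, combined with Lemma \ref{normalsubgroup}, is what guarantees that the identification moves never degenerate: it rules out loops of length $\le 2$ whose total label lies in a factor (so the three vertices involved in a ``cancellation'' identification are distinct and the diagram stays planar and simply connected), and it rules out two adjacent regions whose boundary cycles are mutually inverse (which would otherwise let the diagram collapse). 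Your cleanup steps (a)--(d) do not supply a substitute for this; ``absorbing degenerate regions into neighbours'' and ``re-choosing $w$'' do not prevent an identification from pinching the diagram or producing a sphere.

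Second, the engine that forces the boundary length down to $\le 1$ is not that the cleaned diagram's boundary ``reads the normal form of $w$'' --- that is essentially what has to be proved. The paper's argument is: the boundary label stays conjugate to the chosen target $g$, which is \emph{elliptic} ($\lon(g)=0$); hence whenever the boundary has at least $3$ segments, consecutive segment labels cannot always alternate factors (that would make the boundary label strictly cyclically reduced of even length $\ge 4$, i.e.\ hyperbolic), so there is always a pair of consecutive boundary segments on which to perform the next identification (consolidation drops the length by $1$, cancellation by $2$), plus one last ad hoc identification to pass from length $2$ to length $\le 1$. You choose a target of algebraic length $1$, which does make it elliptic, but you never use that fact to justify that the reduction can always continue; as written, the step ``the boundary then consists of one segment or one half-segment'' is an assertion, not an argument. (Your treatment of point (3) is closer to the mark: in the paper this is secured at the construction stage by consolidating $x_m$ and $x_1$ at a secondary vertex of the lollipop when $r_i$ is only weakly cyclically reduced, so that every region reads a strictly cyclically reduced conjugate of length $|f|$ from any primary vertex.)
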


\begin{proof}
We start by choosing an element $g \neq \id$ with
$\lon(g) = 0$. By assumption we can write  
$$g= ( \phi_1 f^{\pm 1} \phi_1^{-1}) \cdots ( \phi_n f^{\pm
1} \phi_n^{-1}).$$
with $\phi_i \in {\rm Aut}[\C^2]$.

We assume that we have chosen $g$ such that $n$ is minimal.
By Lemma \ref{reducedform},
we may assume that
each $\phi_i f^{\pm 1} \phi_i^{-1}$ is expressed under
reduced form
$\psi_i r_i \psi_i^{-1}$
(i.e. $|\psi_i r_i \psi_i^{-1}| = |\psi_i| + | r_i|  +
|\psi_i^{-1}|$)
where $r_i \in R(f)$.
There is no restriction to assume that $| \psi_i|=0$ if and
only if $\psi_i=\id$.
Note also that the four following assertions are equivalent:

\vskip1mm

a) $r_i$ is strictly cyclically reduced; \hspace{2mm}
b) $|r_i|=|f|$;  \hspace{2mm}
c) $|r_i|$  is even; \hspace{2mm}
d) $| \psi_i r_i \psi_i^{-1} | $ is even.

\vskip1mm 

If any one of these assertions is satisfied, we necessarily have
$\psi_i=\id$ (since the expression $\psi_i r_i \psi_i^{-1}$
is reduced).\\

Let us now explain the construction of $M$, that we perform
in several steps: \\

\underline{Step 1.} We associate a diagram to each $\psi_i
r_i \psi^{-1}_i$.

Our construction will involve a base point $O$ which will be
considered as a primary vertex.
Let $r_i=x_1 \ldots x_{m}$ be a normal form of $r_i$.

\vskip1mm

$\bullet$ Assume that $r_i$ is strictly cyclically reduced,
i.e. $m= |f|$.

The  diagram for $\psi_i r_i \psi_i^{-1} = r_i$ is the loop
at the base point $O$
consisting of $2m$ half-segments $d_1,d'_1, \ldots, d_m,
d'_m$ such that $\phi (d_jd'_j)=x_j$ for each $j$.

\vskip1mm

$\bullet$ Assume that $r_i$ is not strictly cyclically
reduced, i.e. $m= |f|+1$. Note that in this case $(x_m x_1)x_2\cdots x_{m-1}$ is strictly cyclically reduced.

The  diagram for $\psi_i r_i \psi_i^{-1}$ is a loop at a
vertex $v$
joined to the base point $O$  by a path.

Let $\psi_i = z_1 \ldots z_k$ be a normal form of $\psi_i $.

The path $Ov$ consists of $2k$ half-segments
$e_1,e_1', \ldots, e_k, e'_k$ such that $\phi (e_j e'_j) =
z_j$
for each $j$ and an additional final half-segment $e$.

The loop at $v$ consists of $2m-2$ half-segments 
$b,d_2,d'_2, \ldots, d_{m-1}, d'_{m-1},c$
such that $\phi (d_jd'_j)=x_j$ for each $j$.

The three  half-segments $e,b,c$ which meet at the secondary
vertex $v$
 are labeled to satisfy the necessary (and compatible)
conditions
$ \phi (eb) = x_1$, $\phi (ce^{-1}) = x_m$ and $\phi (cb) =
x_mx_1$.
For instance we can take $\phi(b) = x_1$, $\phi(c) = x_m$
and $\phi(e) = \id$.\\

\underline{Step 2.}
The  initial diagram for the composition
$$g=  (\psi _1 r_1 \psi _1 ^{-1}) \cdots (\psi _n r_n \psi
_n ^{-1})$$ consists of the initial diagrams 
for each $\psi _i r_i \psi _i ^{-1}$ arranged, in
counterclockwise order, around the base point $O$.
This initial diagram has the desired properties
(\ref{point(i)}) and (\ref{point(iii)}).\\

\underline{Step 3.}
We will now proceed to the identification of some
half-segments  of $M$ until
the boundary length of $M$ is $\leq 2$.

Note that in these identifications:
\begin{itemize}
 \item We shall always identify primary vertices with
primary vertices
and secondary vertices with secondary vertices, preserving
this distinction;
 \item The label of a segment will never be in the
amalgamated part;
 \item The number $n$ of regions of $M$ will not change
and (\ref{point(i)}) and (\ref{point(iii)}) will be
satisfied at each stage;
 \item If $\alpha$ is a boundary cycle of $M$, then $\phi
(\alpha)$
is conjugate to $g$.
\end{itemize}

For grounds of brevity, the tiresome and easy verification of the second point
(on label of segments) has been omitted in the two cases below.

If the boundary length of $M$ is $ \geq 3$, there
necessarily exists successive segments $ee'$ and $ff'$ in $\partial M$
such that the labels $ \phi (ee')$  and $\phi (ff')$ 
are in the same factor of Aut$[\C^2]$.
Indeed, otherwise, any boundary cycle
$\alpha= e_1 e_1'\ldots e_i e_i'$ of $M$
would have even length $i \geq 4$ and its label $\phi
(\alpha) =\phi (e_1e'_1) \ldots  \phi (e_ie'_i)$
would be a strictly cyclically reduced  conjugate of $g$: A
contradiction.\\

So we consider the element $s = \phi (ee') \phi (ff')$ which
lies in a factor of Aut$[\C^2]$.

\begin{description}
 \item[Case 1] Assume that $s$ is not in the amalgamated
part.\\

Change the label on the half-segment $e'$ to $1$,
readjusting the labels on the other half-segments
at the secondary vertex separating $e$ and $e'$.
In other words this amounts, for each half-segment $g$
ending at this secondary vertex,
to replace its label $\phi (g)$ by   $ \phi (ge')$.

In the same way, change the label on the half-segment $f$ to
$1$,
readjusting the labels on the other half-segments
at the secondary vertex separating $f$ and $f'$.

Then we identify the (oriented) half-segments $e'$ and
$f^{-1}$
(which now have the same labels) (see Fig.
\ref{fig:cas1}, where the $\bullet$ are primary vertices and the $\circ$ are  secondary vertices).\\

\begin{figure}[ht]
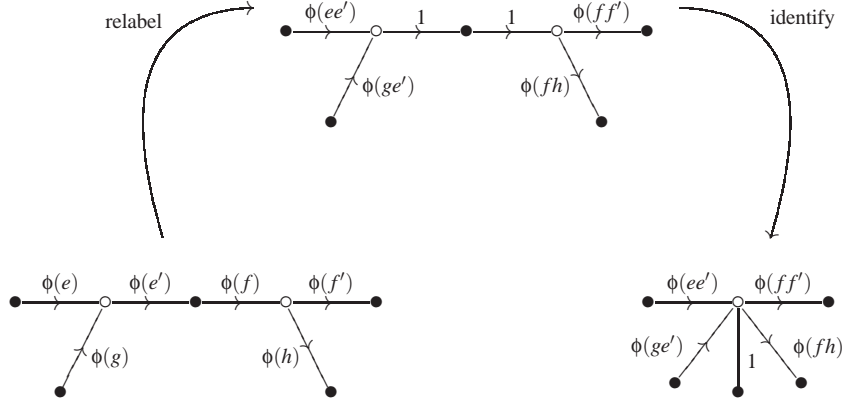

$$\dessinCasUn$$ 
\caption{Relabellings and identifications in case 1.}
\label{fig:cas1}
\end{figure}

 \item[Case 2] Assume that $s$ is in the amalgamated part.\\

Note first that the diagram has no loop of length
$\leq 2$ with total label in one of the factors of Aut$[\C^2]$.
Indeed, such a loop $\alpha$ would be a boundary cycle of
some
strictly smaller subdomain,
so that, by Lemma \ref{normalsubgroup} below, $\phi ( \alpha
)$
would be the product of strictly less that $n$ conjugates of
$f$.
This would contradict the minimality of $n$.

Therefore, if $u$ is the initial vertex of  $ee'$,
$v$ its  terminal  vertex (as well as the initial vertex of
$ff'$)
and $w$ the terminal vertex of $ff'$, then the vertices
$u,v,w$ are distinct.

Recall that $\phi(f)\phi(f') = \phi(e')^{-1}\phi(e)^{-1} s$.
We change the labels in the following way (see Fig.
\ref{fig:cas2}):
\begin{itemize}
 \item we change the label of $f$  to
$\phi (e')^{-1}$, readjusting the labels on the other
half-segments
at the secondary vertex separating $f$ and $f'$;
 \item we change the label of $f'$ to $\phi (e)^{-1}$;
 \item for each half-segment $g$ having $w$ as initial
vertex, we
replace its label $\phi (g)$ by   $s \phi (g)$.
\end{itemize}
Then we identify the (oriented) segments $e,e'$ and
$f'^{-1},f^{-1}$ (which now have the same labels).
\end{description}

\begin{figure}[ht]
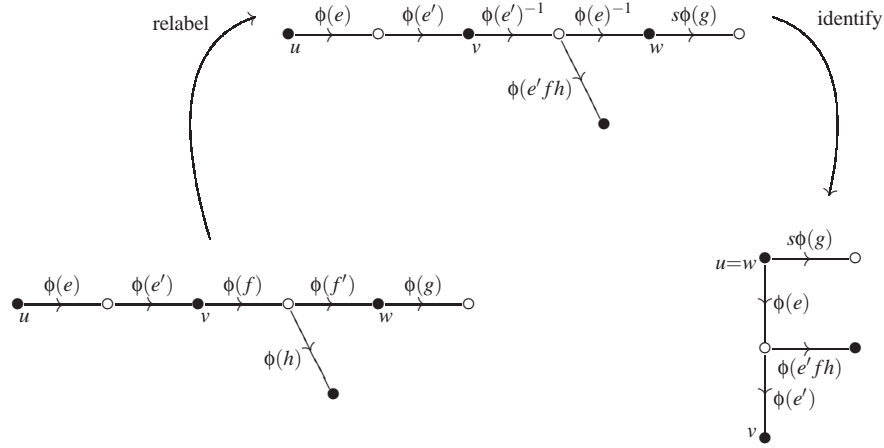

$$\dessinCasDeux$$ 
\caption{Relabellings and identifications in case 2.}
\label{fig:cas2}
\end{figure}

Note that after performing the identification in case 1
(resp. in case 2) the boundary length drops by 1 (resp. by
2).
Note also that if two regions $D_1$ and $D_2$ share at least
one half-segment,
and if $r_1, r_2$ are two boundary cycles of these regions
with respect to a common starting point,
then we can not have $r_1 = r_2^{-1}$.
Indeed, if it was the case, removing the two regions from
the diagram and applying Lemma \ref{normalsubgroup}
we would obtain a new element in $R(f)$ that would
contradicts the minimality of $n$. 
In fact, by Lemma \ref{intersectionoftwogeodesics}, two
regions in the diagram never share an edge of length greater
than 4.\\

\underline{Step 4.}
By induction, the previous step gave us a diagram with a
boundary length $\le 2$.
We now perform one last more identification to
obtain that the boundary length of $M$ is at most 1.
If the last identification falls under case 1 there is no
particular problem.
However, if we are in case 2, then we can no longer  assume
that the vertices $u$ and $w$ are disjoint.
So we slightly modify the procedure: we keep the label of
$f'$ to be $\phi(e)^{-1} s$
and we only identify the half-segments $e'$ and $f$.
It may happen that after this identification
the label of the segment $ef'$ on the boundary of $M$ is in
the amalgamated part:
apart from being slightly non aesthetic, this will not be
a problem in the proof of Theorem \ref{thm:B}.  
\end{proof}

\begin{lem} \label{reducedform}
Any conjugate of $f$ (notation as in Theorem
\ref{constructionofM}) can be written under reduced form 
$\psi r \psi^{-1}$,
where $r$ is a weakly cyclically reduced conjugate of $f$.
\end{lem}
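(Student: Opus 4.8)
The plan is to remove cancellation occurring at the ends of $f$ one factor at a time, by an induction on $|\phi|$, where $g=\phi f\phi^{-1}$ is the conjugate to be rewritten. Since $f$ is strictly cyclically reduced (hence weakly cyclically reduced), if the product $\phi f\phi^{-1}$ already happens to be in reduced form we are done with $\psi=\phi$ and $r=f$. So the whole content is to show that, as long as $\phi f\phi^{-1}$ is not reduced, one can decrease $|\phi|$ while keeping the conjugated core weakly cyclically reduced.

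First I would normalise $\phi$: writing $\phi=z_1\cdots z_k\,s$ with $z_1\cdots z_k$ in normal form and $s\in A\cap E$, replace $f$ by $sfs^{-1}$ and $\phi$ by $z_1\cdots z_k$; a direct inspection, using that $f$ strictly cyclically reduced means that in a normal form $f=x_1\cdots x_m$ the letters $x_1$ and $x_m$ lie in different factors, shows that $sfs^{-1}$ is again strictly cyclically reduced of length $m=|f|$. Now the three pieces $z_1\cdots z_k$, $x_1\cdots x_m$, $z_k^{-1}\cdots z_1^{-1}$ are each in normal form, so $\phi f\phi^{-1}$ fails to be reduced exactly when there is a cancellation or a consolidation at the seam $z_k\mid x_1$ or at the seam $x_m\mid z_k^{-1}$; replacing $g$ by $g^{-1}=\phi f^{-1}\phi^{-1}$ (and using that $f^{-1}$ is strictly cyclically reduced iff $f$ is) we may assume the defect occurs at the left seam. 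The reduction step is then: set $\phi':=z_1\cdots z_{k-1}$ and $f':=z_kfz_k^{-1}$, so that $g=\phi'f'(\phi')^{-1}$ with $|\phi'|=|\phi|-1$. One checks, by computing the normal form of $f'$, that $f'$ is a weakly cyclically reduced conjugate of $f$: in the cancellation case $z_kx_1\in A\cap E$ the letters $z_kx_1$ and $x_2$ consolidate and $f'$ is strictly cyclically reduced of length $m$, while in the consolidation case $z_kx_1\notin A\cap E$ one finds $f'$ weakly cyclically reduced of length $m+1$; in both cases the first and last letters of $f'$ have product $x_1x_2\notin A\cap E$, resp.\ $x_1\notin A\cap E$.

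Iterating, $|\phi|$ strictly decreases at each step, so after finitely many steps we reach either a reduced expression or the case $|\phi|=0$; in the latter case $\phi\in A\cap E$ and, absorbing it into the core as in the normalisation step, we obtain a reduced expression $\psi r\psi^{-1}$ with $r$ weakly cyclically reduced (that $r\in R(f)$ is then automatic). The step I expect to require the most care is the verification that $f'=z_kfz_k^{-1}$ stays weakly cyclically reduced: this is exactly where strict cyclic reducedness of $f$ is used, and one must keep the cancellation case (length unchanged, core stays strictly cyclically reduced) carefully separate from the consolidation case (length goes up by one, core only weakly cyclically reduced), and check that a consolidation introduced at the left seam cannot interact with the right seam — which is guaranteed because $z_k$ lies in the factor of $x_1$, hence in the factor opposite to $x_m$.
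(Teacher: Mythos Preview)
Your approach is correct but genuinely different from the paper's. The paper gives a one-shot geometric argument on the Bass-Serre tree: given $g=\phi f\phi^{-1}$, locate the vertex $p\in\geo(g)$ closest to the base edge $\id(A\cap E)$, take the midpoint $I'$ of the edge of $\geo(g)$ through $p$ on the side of the base edge, and use transitivity of the action on edge-midpoints to find $\psi$ with $\psi(I)=I'$; then $r=\psi^{-1}g\psi$ is weakly cyclically reduced and the length count $|g|=\lon(g)+2\,\dist(I,\geo(g))$ gives reducedness directly. Your argument is purely combinatorial, peeling one letter of $\phi$ at a time. The geometric proof is shorter and makes the role of $\psi$ transparent (it realises the shortest path from the base edge to the geodesic), while your inductive argument has the advantage of not invoking the tree at all.

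One point in your write-up needs tightening. Your induction step is phrased for a strictly cyclically reduced core, but after a consolidation step the new core $f'$ is only weakly cyclically reduced (length $m+1$, both end-letters in the factor of $x_1$). You cannot then re-enter the same case analysis as written. The fix is to observe that after such a step the process actually terminates: the two ends of $f'$ lie in the factor of $z_k$, while $z_{k-1}$ lies in the opposite factor, so $\phi' f'(\phi')^{-1}$ has no defect at either seam and is already reduced. Thus iteration only continues through cancellation steps, where the core stays strictly cyclically reduced, and the induction is clean. You gesture toward this (``cannot interact with the right seam'') but that remark concerns the internal seam of $f'$, not the new seams between $\phi'$ and $f'$; it is the latter that need checking. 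Also, the detour through $g^{-1}$ to reduce to a left-seam defect is unnecessary: since $x_1,x_m$ are in opposite factors, $z_k$ can create a defect at only one seam, and the right-seam case is handled by moving $z_k$ to the other side symmetrically.
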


\begin{proof}
Recall that a hyperbolic element of Aut$[\C^2]$
is strictly (resp. weakly) cyclically reduced if and only if
its geodesic contains (resp. intersects) the edge $e=\id (A \cap E)$
in the Bass-Serre tree (see Lemma \ref{strictlyandweaklycyclicallyreducedelements}).
Let now $g$ be a conjugate of $f$.
If the geodesic of $g$ intersects $e$,
we can just set $\psi=\id$, $r=g$.
Therefore, let us assume that this geodesic 
does not  intersect  $e$.

Let $\dist$ be the natural distance on the Bass-Serre tree
and  $I$ be the middle of the edge $e$.
For any element $h$ of $G$,
we have $|h| = \dist(I, h(I))$.

Let $p \in {\rm Geo}(g)$ be the unique vertex such that
$\dist( {\rm Geo}(g), e) = \dist(p,e)$.
Since $\dist(p,e) \geq 1$, there exists a unique point $I'$
on the geodesic $[p,I]$ such that $\dist(p,I')= \frac12$.
The group $G$ acting transitively on the middles of the
edges
of the Bass-Serre tree, there exists an element $\psi$ of
$G$
such that $\psi (I) = I'$.
Let us  set $r= \psi ^{-1} g \psi$.
We have ${\rm Geo}(r)= \psi ^{-1} ( {\rm Geo}(g))$
and $d( {\rm Geo}(g), I') =\frac12$,
so that $\dist( {\rm Geo}(r), I) = \frac12$
and $ {\rm Geo}(r)$ meets $e$, i.e.
$r$ is weakly cyclically reduced.
Finally, we have
$|g|= \dist(I,g(I))= \lon (g) + 2 \dist(I, {\rm Geo}(g) ) =|f| + 2 \dist(I,I') + 1$,
$| \psi | = \dist(I,I')$ and $|r| = |f| + 1$,
so that $|g| = | \psi | + | r| + | \psi ^{-1} |$.  \\
\end{proof}

The following result can be proven similarly as Lemma 1.2 in
\cite[p. 239]{LS} (that is, by induction on the number $m$
of regions).

\begin{lem} \label{normalsubgroup}
Let $M$ be an oriented connected and simply
connected diagram with $m$ regions
$D_1, \ldots,D_m$.
Let $\alpha$ be a boundary cycle of $M$
(beginning at some vertex  of $\partial M$) and
let $\beta_i$ be a boundary cycle of $D_i$
(beginning at some vertex  of $\partial D _i$), for $1 \leq
i \leq m$.
Then $\phi ( \alpha )$ belongs to the normal subgroup
generated
by the $\phi ( \beta _i )$, $1 \leq i \leq m$.
More precisely, there exists $u_1, \ldots,u_m$ in
Aut$[\C^2]$
such that

\vskip1mm

\centerline{ $\phi ( \alpha ) =(u_1 \, \phi (\beta _1) \,
u_1^{-1}) \ldots (u_m \,  \phi (\beta _m) \, u_m^{-1})$. }
\end{lem}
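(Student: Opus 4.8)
The plan is to imitate the classical proof of van Kampen's lemma, i.e.\ \cite[Lemma~1.2, p.~239]{LS}, by induction on the number $m$ of regions of $M$. First note that moving the base point of $\alpha$ (or of any $\beta_i$) to another vertex of $\partial M$ (resp.\ of $\partial D_i$) only replaces $\phi(\alpha)$ (resp.\ $\phi(\beta_i)$) by one of its conjugates in $\Aut[\C^2]$; since such conjugations can be absorbed into the $u_i$, one is free to pick convenient base points at each stage of the argument, and I will do so without further comment.

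For $m=0$ the diagram $M$ is a finite tree; a boundary cycle then traverses each half-segment together with its reverse, the labels cancel, and $\phi(\alpha)=\id$, the empty product of conjugates. For the inductive step, assume $m\ge 1$. The geometric input is the standard fact that a finite, connected, simply connected planar diagram with at least one region has a region $D_j$ meeting the outer boundary along an arc $a$ which is a \emph{proper} sub-arc of $\partial D_j$. Reading off the planar picture, $\partial M$ and $\partial D_j$ run through the shared arc $a$ in the same direction, so, based at the initial vertex of $a$, one may write $\partial M = a\,c$ and $\partial D_j = a\,b$. Deleting from $M$ the open region $D_j$ together with the relative interior of $a$ yields a diagram $M'$ that is again connected and simply connected, carries the $m-1$ regions $\{D_i\}_{i\ne j}$, and has boundary cycle $b^{-1}c$. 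Consequently
\[
\phi(\partial M)=\phi(a)\phi(c)=\bigl(\phi(a)\phi(b)\bigr)\bigl(\phi(b)^{-1}\phi(c)\bigr)=\phi(\partial D_j)\cdot\phi(\partial M'),
\]
and applying the induction hypothesis to $M'$ expresses $\phi(\partial M')$, hence $\phi(\partial M)$, as a product of conjugates of the $\phi(\beta_i)$, one factor per region; a last round of the base-point conjugations noted above restores the prescribed $\alpha$ and $\beta_i$.

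The conceptual content is thus light; the real work --- and the only genuine obstacle --- is the planar-topology bookkeeping in the inductive step. One must locate a boundary region whose shared arc with $\partial M$ is a proper sub-arc, and one must handle the degenerate configurations (pendant trees attached to $M$, a region separating $M$, a region whose entire boundary lies on $\partial M$, an arc $a$ reduced to a single vertex): in such cases one first splits $M$ at a cut vertex into sub-diagrams with strictly fewer regions, whose boundary labels multiply together with suitable conjugations, and then verifies that every piece arising is again connected and simply connected so that the induction applies. All of this is exactly what is carried out in \cite[Lemma~1.2]{LS}, transcribed into the present vocabulary of primary and secondary vertices, segments and half-segments.
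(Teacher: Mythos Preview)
Your proposal is correct and follows exactly the approach the paper indicates: the paper does not write out a proof at all but simply says the result ``can be proven similarly as Lemma 1.2 in \cite[p.~239]{LS} (that is, by induction on the number $m$ of regions)'', which is precisely the van Kampen argument you have spelled out. Your handling of the base case, the removal of a boundary region in the inductive step, and the remarks on degenerate configurations are all in line with the Lyndon--Schupp proof being invoked.
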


\vskip5mm

\subsection{A  dictionary between  Bass-Serre and
Lyndon-Schupp theories}

Let $\alpha$ be a boundary cycle of some region of $M$ (as in Theorem
\ref{constructionofM})
beginning at some vertex $v$.
If $v$ is primary (resp. secondary), $\phi ( \alpha)$ is a
reduced form
of a strictly cyclically reduced (resp. non strictly
cyclically reduced)
element of $R(f)$.

\begin{lem} \label{intersectionoftwogeodesics}
If $D_1,D_2$ are two distinct regions of a diagram $M$ 
having a common edge,
there exists a primary vertex $v$ of $\partial D_1 \cap
\partial D_2$ such that
the labels $g_1,g_2$  of the boundary cycles of $D_1,D_2$
beginning at $v$ satisfy
\vskip1mm
\centerline{$| {\rm Geo}(g_1) \cap {\rm Geo}(g_2)| \, \geq
\, | \partial D_1 \cap \partial D_2 | $.}
\end{lem}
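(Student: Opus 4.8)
The plan is to translate the combinatorial data of the shared edge in $M$ directly into a statement about the Bass-Serre tree, using the dictionary established just above. First I would fix a common edge $\epsilon$ of $\partial D_1 \cap \partial D_2$ of length $\ell = |\partial D_1 \cap \partial D_2|$, viewed as a chain of $\ell$ segments (i.e. $2\ell$ half-segments) $e_1e_1'\cdots e_\ell e_\ell'$. Both endpoints of this chain are primary vertices; let $v$ be the initial one. Reading a boundary cycle of $D_1$ starting at $v$ and a boundary cycle of $D_2$ starting at $v$, the edge $\epsilon$ appears as a common prefix (up to orientation): $g_1 = \phi(e_1e_1')\cdots\phi(e_\ell e_\ell')\cdot w_1$ and $g_2 = \phi(e_1e_1')\cdots\phi(e_\ell e_\ell')\cdot w_2$ in semi-reduced form (recall from Step 3 that two regions sharing a half-segment never have $r_1 = r_2^{-1}$, so the two cycles genuinely differ after the common prefix, and one checks there is no cancellation at the junction). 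Here I should be a little careful about which of the two regions traverses $\epsilon$ forwards and which backwards, but after replacing $g_2$ by a boundary cycle read in the opposite sense if necessary, I may assume both cycles read $\epsilon$ in the same direction from $v$.

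The key step is then to realize $g_1$ and $g_2$ as hyperbolic automorphisms and locate their geodesics. Since $\phi(\alpha)$ for a boundary cycle $\alpha$ of a region is (by Theorem \ref{constructionofM}(iii)) a reduced form of a strictly cyclically reduced conjugate of $f$, and more generally by the dictionary a boundary cycle starting at a secondary vertex gives a reduced form of a non strictly cyclically reduced element of $R(f)$, the words $g_1, g_2$ are reduced expressions $a_1b_1\cdots$ in the amalgam. A reduced expression $g = x_1\cdots x_m$ with $x_mx_1$ outside the amalgamated part has its geodesic $\geo(g)$ passing through the consecutive vertices $\id\, X_0 - x_1 X_1 - x_1x_2 X_2 - \cdots$, where the $X_j$ alternate between $A$ and $E$ appropriately (this is exactly Fundamental Example \ref{exple:fundamental} / the discussion in subsection \ref{ellipticandhyperbolicelements}). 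Because $g_1$ and $g_2$ share the length-$\ell$ prefix $\phi(e_1e_1')\cdots\phi(e_\ell e_\ell')$ as a reduced initial word, the first $2\ell$ edges of $\geo(g_1)$ coming out of the base vertex coincide edge-by-edge with those of $\geo(g_2)$ — the partial products $x_1, x_1x_2, \ldots$ agree for the first $2\ell$ of them, hence so do the corresponding cosets, hence so do the vertices and edges. Therefore $\geo(g_1)\cap\geo(g_2)$ contains a path of at least $2\ell$ edges... wait, I must match the normalization: a segment in the diagram has length $1$ and corresponds to one factor of the amalgam, which corresponds to one edge of $\T$; and $|\partial D_1\cap\partial D_2|$ is measured in segments. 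So $\ell$ segments give $\ell$ shared factors, hence $\geo(g_1)$ and $\geo(g_2)$ share at least $\ell$ consecutive edges, giving $|\geo(g_1)\cap\geo(g_2)| \ge \ell = |\partial D_1\cap\partial D_2|$, as desired.

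The main obstacle I anticipate is the bookkeeping around orientations and the choice of base vertex $v$: I need $v$ to be a \emph{primary} vertex of $\partial D_1\cap\partial D_2$, and I need both boundary cycles to begin reading the shared edge from $v$ in a compatible direction, so that the common prefix is literally a common prefix and not a prefix of one versus a suffix of the other. Handling the possibility that the shared edge is traversed in opposite senses by the two regions — and checking that in that case replacing one boundary cycle by its reverse (which corresponds to replacing $g_i$ by a conjugate of $g_i^{-1}$, still in $R(f)$, with the same geodesic up to the $\Z$-action) restores compatibility — is the delicate point. A secondary nuisance is verifying that no consolidation or cancellation occurs at the two ends of the shared chain in a way that would shorten the common reduced prefix; this follows from the fact (noted in Step 3 and its aftermath) that labels of segments are never in the amalgamated part and that distinct regions cannot have mutually inverse boundary cycles, but it needs to be stated. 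Once these normalizations are pinned down, the correspondence "shared segments in $M$ $\leftrightarrow$ shared edges of $\geo$" is immediate from the explicit description of geodesics of reduced words.
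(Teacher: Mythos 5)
Your overall strategy is the one the paper uses: read the two boundary cycles from a common vertex of the shared edge, observe that $g_1$ and $g_2^{-1}$ then have a common reduced prefix, and translate that prefix into a common subpath of the two geodesics via the explicit description of $\geo$ for a cyclically reduced word. However, there is a genuine off-by-one gap, and it sits exactly at the point you dismiss as a ``secondary nuisance''. You assume that both endpoints of the common edge $\partial D_1\cap\partial D_2$ are primary vertices, so that this edge is a chain of $\ell=|\partial D_1\cap\partial D_2|$ full segments. In general this fails: the endpoints of a maximal common edge are the vertices where the two boundaries diverge, i.e.\ branch vertices of the diagram, and these need not be primary --- typically they are secondary (compare the first line of the proof of Lemma \ref{lem:valence3}). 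A common edge of length $L$ with two secondary ends consists of a half-segment, then $L-1$ full segments, then a half-segment; only $k=L-1$ full segments are available, which is precisely why the paper's proof speaks of ``the largest integer $k<|\partial D_1\cap\partial D_2|$''. With your own conversion rate --- $m$ shared letters of the reduced prefix yield the $m$ shared edges joining the $m+1$ shared vertices $\id X_0,\, x_1X_1,\dots,\,x_1\cdots x_mX_m$ --- you then get only $L-1$ common edges of $\geo(g_1)\cap\geo(g_2)$, which does not prove the stated inequality.

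The missing edge is recovered by the one observation your write-up omits. Take $v$ to be the initial \emph{primary} vertex of the chain of $k$ full segments $s_1,\dots,s_k$ inside the common edge; the boundary cycles based at a primary vertex are strictly cyclically reduced, so by Lemma \ref{strictlyandweaklycyclicallyreducedelements} each of $\geo(g_1)$ and $\geo(g_2^{-1})=\geo(g_2)$ contains both $\id A$ and $\id E$, hence the edge $\id\,(A\cap E)$ \emph{in addition to} the $k$ edges $\phi(s_1)(A\cap E),\dots,\phi(s_1)\cdots\phi(s_k)(A\cap E)$ coming from the shared letters. This gives $k+1\ge|\partial D_1\cap\partial D_2|$ common edges and closes the gap. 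Your remaining concerns are fine: the orientation issue is handled exactly as you suggest by passing to $g_2^{-1}$, and the appeal to ``$r_1\neq r_2^{-1}$'' is unnecessary here, since the inequality holds trivially when the two geodesics coincide.
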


\begin{proof}
If $k$ is the largest integer such that $k < | \partial D_1
\cap \partial D_2 | $,
there exists a path of $k$ segments $s_1,\ldots,s_k$
included into
$\partial D_1 \cap \partial D_2$.
We can just take for $v$ the initial or terminal vertex of
this path
(if $k=0$, these two vertices  coincide).
Indeed, we may assume that $g_1$ has the normal form 
$g_1= \phi (s_1) \ldots \phi (s_k) x_1 \ldots x_m$
(where each $x_i$ is in some factor of $G$).
Therefore,  $g_2^{-1}$ has the normal form
$g_2^{-1}= \phi (s_1) \ldots \phi (s_k) y_1 \ldots y_m$
(where each $y_i$ is in some factor of $G$).

The geodesics  ${\rm Geo}(g_1)$ and  ${\rm
Geo}(g_2^{-1})={\rm Geo}(g_2)$ contain the $k+1$ consecutive
edges:

$$\id \, (A \cap E),\: \phi (s_1) \,  (A \cap
E), \ldots,
\phi (s_1) \ldots \phi (s_k) \, (A \cap E). $$ 
\end{proof}

\begin{exple}
Assume that $M$ contains the two regions depicted in Fig. \ref{fig:expleLS}
(the $\bullet$ are primary vertices, the secondary vertices
are denoted by $\circ$ only when they have valence $\ge
3$).

\begin{figure}[h]
$$
\xygraph{
!{<0cm,0cm>;<1cm,0cm>:<0cm,1cm>::}
!{(-1,0)}*{\bullet} ="0l"
!{(1,0)}*{\bullet} ="0r"
!{(-2,0)}*{\circ} ="l"
!{(2,0)}*{\circ} ="r"
!{(-3,1)}*{\bullet} ="ul"
!{(3,1)}*{\bullet} ="ur"
!{(-3,-1)}*{\bullet} ="dl"
!{(3,-1)}*{\bullet} ="dr"
!{(-.5,-.6)}*{D_1}
!{(-.5,.6)}*{D_2}
"l"-|*@{<}@{-}^{e_4}"ul"
"ul"-|*@{<}@{-}^{a_3}"ur"
"ur"-|*@{<}@{-}^{e_3}"r"
"l"-|*@{>}@{-}_{e_1}"dl"
"dl"-|*@{>}@{-}_{a_2}"dr"
"dr"-|*@{>}@{-}_{e_2}"r"
"0l"-@{-}_{\id }"l"
"0r"-@{-}^{\id }"r"
"0l"-|*@{<}@{-}^{a_1}^>{v}"0r"
}
$$
\caption{} \label{fig:expleLS}
\end{figure}

We get $g_1 = a_1e_1a_2e_2$, $g_2 = e_3a_3e_4a_1^{-1}$
and Fig. \ref{fig:expleBS} gives the picture in the Bass-Serre tree. 
Note that here for simplicity we took $D_1$ and $D_2$ with boundary length $4$,
but in the context of Theorem \ref{constructionofM} any region has boundary length at least $10$.

\begin{figure}[h]
$$\xygraph{
!{<0cm,0cm>;<1cm,0cm>:<0cm,1cm>::}
!{(0,2)}*{\bullet} ="e2A"
!{(0,0)}*{\bullet} ="e3A" 
!{(1,1)}*{\bullet} ="idE"
!{(2.5,1)}*{\bullet} ="idA"
!{(4,1)}*{\bullet} ="a1E"
!{(5,2)}*{\bullet} ="e1A"
!{(5,0)}*{\bullet} ="e4A" 
!{(1,2)}="g1i"
!{(4,2)}="g1f"
!{(4,0)}="g2i"
!{(1,0)}="g2f"
"e2A"-_<{e_2^{-1}A}"idE"
"e3A"-^<{e_3A}"idE"
"idA"-_>(0.9){\id E}_<{ \id A}"idE"
"e1A"-_<{a_1e_1A}"a1E"
"e4A"-^<{a_1e_4^{-1}A}"a1E"
"idA"-^>(0.9){a_1E}"a1E"
"g1i"-@{.>}@/_0.5cm/^{g_1}"g1f"
"g2i"-@{.>}@/_0.5cm/^{g_2}"g2f"
}$$
\caption{} \label{fig:expleBS}
\end{figure}

\end{exple}

\begin{lem} \label{lem:valence3}
If $v$ is a vertex of valence $3$ of $M$ 
with regions $D_1, D_2, D_3$ meeting at $v$
and if $g_1,g_2,g_3$ are the labels of the boundary cycles
of these regions beginning at $v$,
then the geodesics of the $g_i$'s form a tripod in the
Bass-Serre tree and for all $i,j$'s:

\vskip1mm

\centerline{$| {\rm Geo}(g_i) \cap {\rm Geo}(g_j)) | \, 
\geq  \, | \partial D_i \cap \partial D_j |$.}

\end{lem}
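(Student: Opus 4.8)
The plan is to rerun the proof of Lemma~\ref{intersectionoftwogeodesics} for each of the three pairs of regions meeting at $v$, and then to read the tripod shape off the local picture of $M$ at $v$. Going counterclockwise around $v$, the regions $D_1,D_2,D_3$ and the three edges $\partial D_1\cap\partial D_2,\ \partial D_2\cap\partial D_3,\ \partial D_3\cap\partial D_1$ of $M$ alternate, and $v$ is an endpoint of each of these three edges. Fix a pair, say $\{D_1,D_2\}$, and let $k$ be the largest integer with $k<|\partial D_1\cap\partial D_2|$. As in Lemma~\ref{intersectionoftwogeodesics} there is a path of $k$ consecutive segments contained in $\partial D_1\cap\partial D_2$ and issued from $v$; evaluating $\phi$ along it starting at $v$ gives a common initial piece of a normal form of $g_1$ and of $g_2^{\pm 1}$, so the corresponding $k+1\ge|\partial D_1\cap\partial D_2|$ consecutive edges of $\T$ lie on both $\geo(g_1)$ and $\geo(g_2)$. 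Doing this for the three pairs yields the asserted inequalities $|\geo(g_i)\cap\geo(g_j)|\ge|\partial D_i\cap\partial D_j|$.

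For the tripod property, conjugate so that the vertex of $\T$ at which the boundary cycles $g_1,g_2,g_3$ are read --- i.e.\ the one attached to $v$ --- is $\id A$ when $v$ is primary and $\id E$ when $v$ is secondary; call this vertex $\bar v$. Each $\geo(g_i)$ passes through $\bar v$, and reading $g_i$ from $\bar v$ describes a path that runs along $\geo(g_i)$ near $\bar v$: it leaves $\bar v$ along the edge $e_i^{+}$ of $\T$ prescribed by the first letter of $g_i$ and reaches $\bar v$ along the edge $e_i^{-}$ prescribed by its last letter. On the diagram side, near $v$ the region $D_i$ is bounded by exactly the two edges $\partial D_{i-1}\cap\partial D_i$ and $\partial D_i\cap\partial D_{i+1}$ (indices mod $3$), so its boundary cycle issued from $v$ starts along one of them and comes back along the other; the correspondence between segments of $M$ near $v$ and edges of $\T$ at $\bar v$ then identifies the pair $\{e_i^{+},e_i^{-}\}$ with the pair $\{\varepsilon_{i-1,i},\varepsilon_{i,i+1}\}$, where $\varepsilon_{i,j}$ denotes the edge of $\T$ emanating from $\bar v$ in the direction of the $M$-edge $\partial D_i\cap\partial D_j$ read from $v$. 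Since $\geo(g_i)$ is a bi-infinite geodesic through $\bar v$, it enters and leaves $\bar v$ by distinct edges, i.e.\ $e_i^{+}\ne e_i^{-}$, hence $\varepsilon_{i-1,i}\ne\varepsilon_{i,i+1}$; letting $i$ range over $1,2,3$ shows that $\varepsilon_{12},\varepsilon_{23},\varepsilon_{31}$ are pairwise distinct.

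Now the three distinct edges $\varepsilon_{12},\varepsilon_{23},\varepsilon_{31}$ issue from $\bar v$, and at $\bar v$ the geodesic $\geo(g_i)$ uses precisely the two of them whose index set contains $i$. So for $i\ne j$ the only edge at $\bar v$ common to $\geo(g_i)$ and $\geo(g_j)$ is $\varepsilon_{ij}$; consequently $\geo(g_i)\cap\geo(g_j)$ is a path having $\bar v$ as an endpoint and containing the edge $\varepsilon_{ij}$, so it contains at least one edge and, by the first paragraph, has length $\ge|\partial D_i\cap\partial D_j|$. Moreover $\varepsilon_{12}\notin\geo(g_3)$, whence $\geo(g_1)\cap\geo(g_2)\cap\geo(g_3)=\{\bar v\}$. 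Thus $\geo(g_1),\geo(g_2),\geo(g_3)$ form a tripod centered at $\bar v$, as wanted. The only step that is more than routine is the identification $\{e_i^{+},e_i^{-}\}=\{\varepsilon_{i-1,i},\varepsilon_{i,i+1}\}$ in the second paragraph: one must match the orientations of the half-segments of $M$ meeting at $v$ with the two tree-edges used by $\geo(g_i)$ at $\bar v$, treating separately the case where $v$ is primary and the case where $v$ is secondary (where the boundary cycle of $D_i$ crosses $v$ through the two distinguished half-segments of its loop). This is careful bookkeeping within the dictionary already set up in this subsection, with no extra idea.
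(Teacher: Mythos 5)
Your overall strategy is the same as the paper's: establish the pairwise inequalities by the mechanism of Lemma \ref{intersectionoftwogeodesics}, then show that the three tree-edges at the central vertex $\bar v$ are pairwise distinct to obtain the tripod. The tripod part of your argument is sound (your observation that a geodesic must use two distinct edges at $\bar v$ replaces the paper's remark that $\phi(e_i)\phi(e_j)^{-1}$ is not in the amalgamated part, and the two are equivalent).

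There is, however, a genuine gap in your first paragraph. The vertex $v$ is necessarily \emph{secondary} (at a primary vertex of valence $3$ the three segments through it would have to be properly $2$-coloured around a triangle by the factor of their label, which is impossible), so each boundary cycle based at $v$ begins and ends with a half-segment: its label $g_i=\phi(e)\,\phi(s_1)\cdots\phi(s_k)\,x_1\cdots x_m\,\phi(e')^{-1}$ is only \emph{weakly}, not strictly, cyclically reduced. Consequently ${\rm Geo}(g_i)$ does \emph{not} contain the edge $\id\,(A\cap E)$, and the step ``common initial piece of the normal forms $\Rightarrow$ the corresponding $k+1$ consecutive edges lie on both geodesics'' cannot be read off Lemma \ref{intersectionoftwogeodesics} as you do; one must first conjugate by the label $\phi(e)$ of the initial half-segment, observe that $\phi(e)^{-1}g_i\phi(e)$ is strictly cyclically reduced, and translate its geodesic back by $\phi(e)$ to see that the common edges are $\phi(e)(A\cap E),\ \phi(e)\phi(s_1)(A\cap E),\dots$ This conjugation computation is exactly the content of the paper's proof, and it is the point at which the present lemma differs from Lemma \ref{intersectionoftwogeodesics}; as written, your paragraph asserts its conclusion without justifying it (and your phrase ``a path of $k$ consecutive segments issued from $v$'' does not parse for a secondary $v$). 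A second, minor, slip: the type of $\bar v$ in $\T$ is governed by the factor ($A$ or $E$) containing the labels of the half-segments at $v$, not by whether $v$ is primary or secondary, so ``$\id E$ when $v$ is secondary'' should read ``$\id A$ or $\id E$ according to that factor.''
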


\begin{proof}
The vertex $v$ is necessarily secondary.
Let $e_1$ (resp. $e_2$, resp. $e_3$) be the (oriented)
half-segment
having $v$ as initial vertex and included into $\partial D_2
\cap \partial D_3$
(resp. $\partial D_1 \cap \partial D_3$, resp. $\partial D_1
\cap \partial D_2$).
The $\phi (e_i)$'s  are in the same factor of $G$ and if $i
\neq j$,
$\phi (e_i)\phi (e_j)^{-1}$ is not in the amalgamated part.
As in Lemma \ref{intersectionoftwogeodesics}, let $k$ be the
largest integer such that
$k < | \partial D_1 \cap \partial D_2 | $ and let $s_1,
\ldots, s_k$ be  segments such that
the path $e_3,s_1,\ldots,s_k$ is included into $\partial D_1
\cap \partial D_2$.
We may assume that $g_1$ has the normal form 

\vskip1mm

\centerline{$g_1= \phi (e_3) \phi (s_1) \ldots \phi (s_k)
x_1 \ldots x_m \phi (e_2)^{-1}$,}

\vskip1mm

\noindent where each $x_i$ is in some factor of $G$.
Therefore, $g'_1= \phi (e_3)^{-1} g_1 \phi (e_3)$ is
strictly cyclically reduced and has the normal form

\vskip1mm

\centerline{$g'_1=  \phi (s_1) \ldots \phi (s_k) x_1 \ldots 
x_{m+1}$,}

\vskip1mm

\noindent where $x_{m+1}=\phi (e_2)^{-1} \phi (e_3)$.
Since the geodesic of $g'_1$ contains the consecutive edges

\vskip1mm

\centerline{$\id \, (A \cap E),\: \phi (s_1) \,  (A \cap E),
\ldots, \phi (s_1) \ldots \phi (s_k) \, (A \cap E)$,} 

\vskip1mm

\noindent it is clear that the geodesic of $g_1$ contains
the consecutive edges

\vskip1mm

\centerline{$\phi(e_3)  \, (A \cap E),\: \phi(e_3) \phi
(s_1) \,  (A \cap E), \ldots,
\phi(e_3) \phi (s_1) \ldots \phi (s_k) \, (A \cap E)$.}

\vskip1mm

One would show in the same way that these edges are also
contained in the geodesic of $g_2$,
so that we get $| {\rm Geo}(g_1) \cap {\rm Geo}(g_2)| \,
\geq \, | \partial D_1 \cap \partial D_2 | $.
The other inequalities are proven similarly.
We finish the proof by noting that
$ {\rm Geo}(g_1) \cap {\rm Geo}(g_3)$ contains the edge
$\phi(e_2)  \, (A \cap E)$
and that $ {\rm Geo}(g_2) \cap {\rm Geo}(g_3)$ contains the
edge $\phi(e_1)  \, (A \cap E)$.
If the $\phi (e_i)$'s  are in the factor $A$ (resp. $E$),
it is clear that the three edges $\phi (e_i) \, (A \cap E)$
intersect at the vertex $\id A$
(resp. $\id E$).  
\end{proof}

\section{The proof of Theorem \ref{thm2}} \label{sec:proofoftheorem3}

\subsection{A result about curvature}
 
Let us recall some notations from \cite{LS}.
If $v$ is a vertex of a diagram $M$,  the degree $d(v)$ (or
valence) of $v$ will denote the number of oriented edges
having $v$ as initial vertex
(thus, if an edge has both endpoints at $v$, we count it
twice).
If $D$ is a region, the degree $d(D)$ of $D$ will denote the
number of edges of $D$.
The following formula defines a curvature contribution for
each region:
$$\delta(D) = 2 - d(D) + \sum_{v\in D} \frac{2}{d(v)}.$$

\begin{lem}\label{lem:courbure}
For any diagram on the 2-sphere,
we have $$ 4 = \sum_D \delta(D).$$
\end{lem}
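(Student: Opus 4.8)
The plan is to interpret $\delta(D)$ as a genuine combinatorial curvature concentrated on the regions, and to prove the formula by an Euler characteristic computation for the 2-sphere. Write $V$, $E$, $F$ for the number of vertices, edges, and regions of the diagram $M$ on $S^2$; since $M$ fills the sphere, Euler's formula gives $V - E + F = 2$. The strategy is to sum the quantity $2 - d(D) + \sum_{v \in D} \tfrac{2}{d(v)}$ over all regions $D$ and show it collapses to $2\chi(S^2) = 4$ after using the incidence relations between vertices, edges and regions.

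First I would handle the three sums separately. Summing the constant term gives $\sum_D 2 = 2F$. For the second term, note that $\sum_D d(D)$ counts each edge exactly twice (every edge lies on the boundary of exactly two regions, since the diagram is on a closed surface), so $\sum_D d(D) = 2E$. For the third term, the double sum $\sum_D \sum_{v \in D} \tfrac{2}{d(v)}$ should be reorganized by vertex: for a fixed vertex $v$, the number of regions incident to $v$ equals $d(v)$ (around $v$ the oriented edges and the regional corners alternate, so there are as many corners at $v$ as oriented edges at $v$), hence $v$ contributes $d(v) \cdot \tfrac{2}{d(v)} = 2$ to the sum. Therefore $\sum_D \sum_{v \in D} \tfrac{2}{d(v)} = 2V$. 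Combining, $\sum_D \delta(D) = 2F - 2E + 2V = 2(V - E + F) = 4$.

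The main point requiring care — and what I expect to be the only real obstacle — is justifying the two counting identities in the degenerate situations that the paper's diagrams genuinely allow: an edge whose two sides belong to the same region (so it should be counted twice in $d(D)$ for that single $D$), and a vertex lying on the boundary of $M$ or of degree less than $3$, or an edge both of whose endpoints are the same vertex (to be counted twice in $d(v)$, consistently with the convention recalled just before the lemma). In all these cases one must check that the bookkeeping "each edge is incident to two region-sides" and "each vertex has $d(v)$ regional corners" still holds with multiplicities counted as prescribed; this is a matter of working locally in a small disk around each edge and each vertex and matching the alternation of edges and corners. Once these local checks are in place, the global count is immediate and the formula follows.

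One should also note that this lemma is really just the spherical Gauss–Bonnet / Euler characteristic identity written in Lyndon–Schupp language, so an alternative to the direct count is to invoke Euler's formula $V - E + F = 2$ for cell decompositions of $S^2$ directly and then verify the algebraic rearrangement above; either route reduces the statement to the two incidence identities, which is where all the content lies.
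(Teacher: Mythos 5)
Your proof is correct and is essentially the paper's own argument: both reduce the identity to Euler's formula $V-E+F=2$ on the sphere together with the double-counting facts that each edge contributes two region-sides and each vertex $v$ lies in $d(v)$ regional corners (the paper merely packages these as sums over incidence pairs $(v,D)$ rather than term by term). Your extra remarks on the degenerate incidences are a reasonable elaboration of what the paper calls ``obvious relations,'' not a different route.
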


\begin{proof} 
Let $V,E,F$ be the numbers of vertices, edges and faces of
the diagram.
The formula is a direct consequence of Euler's formula on
the sphere
$2 = V - E +F$ and of the obvious relations $2E =
\sum_{(v,D)} 1$,
$V = \sum_{(v,D)} \frac{1}{d(v)}$ and  $F=\sum_{(v,D)}
\frac{1}{d(D)}$:
$$4 =  2V + 2F - 2E =  \sum_{(v,D)} \left( \frac{2}{d(v)} +
\frac{2}{d(D)} -1 \right) = \sum_D \delta(D) $$
where the first sum runs over the couples $(v,D)$  with $v$
a vertex and $D$  a face such that $v\in D$.  
\end{proof}

\begin{cor}\label{disk}
For any planar diagram homeomorphic to the disk,
we have 
$$ 2 \leq \sum_D \delta(D).$$
\end{cor}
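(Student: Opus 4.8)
The plan is to reduce the planar disk statement to the spherical statement of Lemma~\ref{lem:courbure} by a standard doubling/capping argument. Given a planar diagram $M$ homeomorphic to the disk, I would first regard it as living on the $2$-sphere $S^2$, where its complement is a single open disk, the \emph{outer region} $D_0$. The diagram $M$ together with this outer region is then a diagram on $S^2$ in the sense used in Lemma~\ref{lem:courbure}, so that lemma gives
$$4 = \sum_{D} \delta(D) = \delta(D_0) + \sum_{D \neq D_0} \delta(D),$$
where the sum over $D \neq D_0$ is exactly the sum $\sum_D \delta(D)$ appearing in the statement of the corollary (the sum over the \emph{bounded} regions, i.e.\ the regions of the planar diagram $M$).

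The main point is then to bound the curvature contribution $\delta(D_0)$ of the outer region from above by $2$. Writing $\delta(D_0) = 2 - d(D_0) + \sum_{v \in D_0} \tfrac{2}{d(v)}$, I would argue as follows. Each vertex $v$ lying on the boundary cycle of $D_0$ has degree $d(v) \geq 2$ (it is incident to at least one edge of $\partial M$, giving two oriented edges); hence each term $\tfrac{2}{d(v)}$ is at most $1$. Moreover the number of such boundary vertices is exactly $d(D_0)$, the number of edges of $D_0$, since along the boundary cycle edges and vertices alternate. Therefore
$$\sum_{v \in D_0} \frac{2}{d(v)} \le d(D_0),$$
and consequently $\delta(D_0) = 2 - d(D_0) + \sum_{v\in D_0}\tfrac{2}{d(v)} \le 2$. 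Combining with the displayed identity above yields $4 = \delta(D_0) + \sum_{D\neq D_0}\delta(D) \le 2 + \sum_D \delta(D)$, i.e.\ $2 \le \sum_D \delta(D)$, as desired.

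The step I expect to require the most care is the bookkeeping in the claim that the number of boundary vertices of $D_0$ equals $d(D_0)$ and that each has degree $\ge 2$: one must be slightly careful about the convention counting an edge with both endpoints at the same vertex twice, and about whether a boundary vertex could conceivably have degree $1$ (it cannot, since an edge always contributes two oriented edges at a vertex unless the vertex is isolated, which is excluded here as $v\in\partial D_0$). None of this is deep, but it is the only place where the precise definitions of $d(v)$ and $d(D)$ from the start of this section are genuinely used, so it is where I would be most explicit. An alternative, essentially equivalent, route would be to double $M$ along its boundary to obtain a diagram on $S^2$ with two copies of each bounded region and no outer region, apply Lemma~\ref{lem:courbure} to get $4 = 2\sum_D \delta(D)$, and conclude directly; I would mention this as a remark but carry out the capping argument as the main proof since it avoids discussing how degrees behave under doubling.
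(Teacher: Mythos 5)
Your capping argument is correct, but it is not the route the paper takes: the paper instead doubles the disk, gluing two copies $K_1$, $K_2$ of the diagram along their boundaries to get a spherical diagram $L$, applies Lemma~\ref{lem:courbure} to get $4=2\sum_{D\in K_1}\delta(D)$, and then observes that for each boundary region the curvature computed in the disk is at least the curvature computed in $L$ (because doubling can only increase the degrees of boundary vertices, so the terms $2/d(v)$ can only decrease). Your version adds a single outer region $D_0$ and bounds $\delta(D_0)\le 2$ directly; this is arguably cleaner, since the only input is that every corner of $D_0$ sits at a vertex of degree $\ge 2$ and that $D_0$ has exactly $d(D_0)$ corners, which matches the corner-counting convention implicit in the identities $2E=\sum_{(v,D)}1$, $V=\sum_{(v,D)}\tfrac{1}{d(v)}$ used in the proof of Lemma~\ref{lem:courbure}. (Note that the doubling argument secretly needs the same fact: $d_L(v)=2d_K(v)-2\ge d_K(v)$ requires $d_K(v)\ge 2$ on the boundary.) One small correction to your parenthetical remark: the reason a boundary vertex cannot have degree $1$ is \emph{not} that ``an edge always contributes two oriented edges at a vertex'' --- a non-loop edge contributes one oriented edge at each of its two endpoints, so a leaf of a dangling edge genuinely has degree $1$. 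The correct reason is that the diagram is homeomorphic to a closed disk, so its boundary is a closed walk in the graph and every vertex on it carries at least two edge-ends of that walk (and a disk admits no dangling edges). With that fix your proof is complete and valid.
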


\begin{proof}
Let $K$ be this diagram.
Let $L$ be the spherical diagram obtained by sticking along
their boundaries
two copies $K_1$ and $K_2$ of $K$.
Since $L$ is homeomorphic to the sphere,
we have $\displaystyle{ 4 = \sum_{D\in L} \delta(D)}$,
i.e. $$ 4 = \sum_{D\in K_1} \delta(D) +\sum_{D\in K_2} \delta(D)=
 2 \sum_{D\in K_1} \delta(D) \leq 2 \sum_{D \in K} \delta(D).$$
The last inequality comes from the fact that 
for each boundary region $D$ in $K$ the contribution
curvature $\delta(D)$
computed in the disk diagram is bigger than the
contribution computed in the spherical diagram.
\end{proof}

\begin{rem}\label{rem:neg}
Here is a (non complete) list of faces $D$ having negative
or zero curvature:
\begin{itemize}
\item $D$ with $d(D) \ge 6$;
\item $D$ with $d(D) =5$ and at most 3 vertices of $D$ are
tripods;
\item $D$ with $d(D) = 4$ and each vertex of $D$ has valence
at least 4;
\item $D$ with $d(D) = 4$ and $D$ admits a tripod, two
vertices of  valence at least 4
and a fourth vertex of valence at least 6;
\item $D$ with $d(D) = 3$ and each vertex of $D$ has valence
at least 6.
\end{itemize}
\end{rem}

\subsection{The end of the proof}

We are now in position to prove Theorem \ref{thm2}.
As in Theorem \ref{thm1}, we will prove a stronger and more
geometric version:

\begin{thm}\label{thm:B}
If $f \in G$ is a hyperbolic element of geometric length
$\lon(f) \ge 14$ satisfying condition $(C2)$,
then the normal subgroup generated by $f$ in Aut$[ \C^2 ]$
is different from $G$.
\end{thm}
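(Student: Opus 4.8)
The plan is to argue by contradiction, feeding Theorem~\ref{constructionofM} into the positive‑curvature estimate of Corollary~\ref{disk}. Suppose the normal subgroup generated by $f$ in $\Aut[\C^2]$ equals $G$. Since $f$ is hyperbolic it is conjugate to a strictly cyclically reduced element of even algebraic length $|f|=\lon(f)\ge 14$, so Theorem~\ref{constructionofM} supplies a planar, connected, simply connected $R(f)$-diagram $M$ whose boundary has length at most $1$ and each of whose regions has a boundary cycle of length $|f|$, i.e.\ total edge-length $|f|\ge 14$. By Corollary~\ref{disk}, $\sum_D\delta(D)\ge 2$, so it suffices to show $\sum_D\delta(D)<2$.

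The combinatorial input comes from condition $(C2)$ through three observations. First, any edge of $M$ shared by two regions has length at most $4$: by Lemma~\ref{intersectionoftwogeodesics} its length is bounded by $|\geo(g_1)\cap\geo(g_2)|$, which by Proposition~\ref{prop:geo} (applicable since $(C2)$ implies $(C1)$, cf.\ Remark~\ref{C1->C2}) is a path of type $A-E$, $E-A-E$, $A-E-A$ or $E-A-E-A-E$, hence of length $1$, $2$ or $4$; an edge on $\partial M$ has length at most $1$. Second, any edge of length $\ge 3$ is incident only to vertices of valence $\ge 4$: at a vertex $v$ of valence $3$ the three regions meeting at $v$ form a tripod (Lemma~\ref{lem:valence3}), and since $f$ satisfies $(C2)$, Lemma~\ref{lem:tripod} forces every branch of that tripod—hence every edge at $v$—to have length at most $2$. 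Third, since every region has total edge-length $\ge 14$ with edges of length $\le 4$, every region has $d(D)\ge 4$; in particular $M$ has no monogons, bigons or triangles, and a region carrying an edge of $\partial M$ has $d(D)\ge 5$ (one edge contributes $\le 1$ and the other $d(D)-1$ edges $\le 4$ each, so $1+4(d(D)-1)\ge 14$).

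Now the curvature count. For an interior region $D$ all vertices have valence $\ge 3$, so $\delta(D)\le 2-d(D)+\frac{2}{3}d(D)\le 0$ once $d(D)\ge 6$. If $d(D)=4$, the four edge-lengths (each in $\{1,2,3,4\}$) sum to $\ge 14$, which forces at least two edges of length $4$ and leaves at most one vertex potentially of valence $3$; that vertex sits between two edges of length $\ge 3$, so by the second observation it too has valence $\ge 4$, whence $\delta(D)\le 2-4+4\cdot\frac12=0$. If $d(D)=5$, the same kind of bookkeeping shows that total length $\ge 14$ together with the second observation always produces at least two vertices of valence $\ge 4$ (a length-$4$ edge supplies two at once, while in the absence of any length-$4$ edge there must be at least four edges of length $3$, forcing at least four high-valence vertices), hence $\delta(D)\le 2-5+(2\cdot\frac12+3\cdot\frac23)=0$. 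Thus $\delta(D)\le 0$ for every interior region. Finally, $\partial M$ having length $\le 1$, there is at most one region $D_0$ with an edge on $\partial M$; it has $d(D_0)\ge 5$ and at most two vertices of valence $2$ (the endpoints of its boundary edge), so $\delta(D_0)\le 2-d(D_0)+\bigl(2\cdot 1+(d(D_0)-2)\cdot\frac23\bigr)=\frac83-\frac13 d(D_0)\le 1$. Therefore $\sum_D\delta(D)\le\delta(D_0)\le 1<2$, contradicting Corollary~\ref{disk}.

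The main obstacle is the last step's bookkeeping: for $d(D)=4$ and $d(D)=5$ one must enumerate the edge-length patterns of total length $\ge 14$ compatible with the two structural observations and with the fact that valence-$2$ vertices occur only on $\partial M$, and verify in each case that enough vertices are forced to have valence $\ge 4$; one also has to pin down the local picture of $M$ along its short boundary, i.e.\ the slightly irregular configuration produced by Step~4 of the construction of $M$. This is exactly where the hypothesis $\lon(f)\ge 14$ enters, since it is what rules out triangles: for $\lon(f)=12$ a triangle with three edges of length $4$ and three vertices of valence $4$ would have $\delta(D)=\frac12>0$, and such positive contributions could add up.
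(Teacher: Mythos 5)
Your overall strategy is exactly the paper's: assume $\langle f\rangle_N=G$, invoke Theorem~\ref{constructionofM} to get a disk diagram with perimeter $\le 1$ and regions of boundary length $\ge 14$, translate condition $(C2)$ into the two combinatorial constraints (shared edges of length $\le 4$ via Proposition~\ref{prop:geo} and Lemma~\ref{intersectionoftwogeodesics}; edges at a valence-$3$ vertex of length $\le 2$ via Lemmas~\ref{lem:tripod} and~\ref{lem:valence3}), and contradict the curvature identity of Corollary~\ref{disk}. Your treatment of the interior regions is correct and matches the paper's.

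There is, however, a genuine gap in your treatment of the boundary. You assert that ``there is at most one region $D_0$ with an edge on $\partial M$,'' but this does not follow from the perimeter being $\le 1$: a perimeter of length $1$ consists of two half-segments, and an interior edge joining the two boundary vertices can separate them into two distinct boundary regions. The paper explicitly allows for two boundary regions. This matters because your bound for a boundary region is only $\delta(D_0)\le 1$; with two such regions you would get $\sum_D\delta(D)\le 2$, which is perfectly consistent with Corollary~\ref{disk} and yields no contradiction. To close the argument you must sharpen the boundary estimate to $\delta(D)\le 0$, as the paper does: a boundary edge is a \emph{half-segment} of length $\tfrac12$, so total length $\ge 14$ forces $d(D)\ge 5$, and $d(D)\ge 6$ as soon as $D$ has an interior vertex of valence $3$ (two of its edges then have length $\le 2$, and $\tfrac12+2+2+4+4<14$); moreover the boundary vertices have valence $\ge 3$ when there are two boundary regions (two boundary half-segments plus the separating interior edge), and the unique boundary vertex, counted twice, has valence $\ge 4$ when there is one. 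With these refinements every boundary region also satisfies $\delta(D)\le 0$ and one gets $\sum_D\delta(D)\le 0<2$. A secondary, harmless imprecision: interior edges can have half-integer length, so your case analysis should read ``length $>2$'' rather than ``length $\ge 3$,'' and your count ``at least two edges of length $4$'' for $d(D)=4$ should be ``at most one edge of length $\le 2$''; the conclusions you draw are unaffected.
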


\begin{proof}
We can assume that $f$ is a strictly cyclically reduced
element of length $ \lon(f) = |f| = 2l \geq 14$.
If the normal subgroup generated by $f$ in  Aut$[ \C^2 ]$
was equal to $G$ then, 
by Theorem \ref{constructionofM},
there would exist an ${\rm Aut}[\C^2]$-labeled oriented diagram $M$ such
that:

\begin{enumerate}
 \item $M$ is connected and simply connected;
\item  The perimeter of $M$ is $\leq 1$;
\item If $e_1 e_1' \ldots e_te_t'$ is
a boundary cycle of some region of $M$,
then $t= |f|$ and $\phi (e_1e_1') \ldots \phi (e_te_t')$ is
a reduced form
of a strictly cyclically reduced conjugate of  $f$.
\end{enumerate}

Let $D_1,D_2$ be two distinct regions of $M$ having a common
edge. By Proposition \ref{prop:geo}
and Lemma \ref{intersectionoftwogeodesics}, we have $|
\partial
D_1 \cap \partial D_2 | \leq  4$.
Since $ | \partial D_1 | \geq 14$, we conclude that any
interior region has at least 4 edges.

Furthermore, if $D_1,D_2,D_3$ are three distinct regions of
$M$ having a common vertex of valence 3,
by Lemmas \ref{lem:tripod} and \ref{lem:valence3},
we know that each edge $\partial D_i \cap \partial D_j$ is
at most of length 2.
In consequence, if an interior region has at least 1
interior vertex of valence 3,
then this region has at least 5 edges.
Similarly, if  an interior region has at least 3 interior
vertices of valence 3,
then this region has at least 6 edges. 

By the previous observations, and using Remark
\ref{rem:neg},
we conclude that the curvature contribution $\delta(D)$ of
any interior region $D$ is non positive.
Let us examine now the contribution of the boundary regions.
Since the perimeter is at most 1
(i.e. at most two half-segments), there are at most 2
boundary regions. 

Suppose there are exactly 2 boundary regions. Since the
boundary edge of such a region $D$ is an half-segment,
it is easy to check that $D$ has at least 5 edges,
and that if at least one interior vertex is of valence 3
then $D$ has at least   6 edges.
Thus $\delta(D) \le 0$.

Assume now that there is only 1  boundary region $D$. Then
the only boundary vertex of $D$
(which has to be counted twice) has valence at least 4. So
$D$ has at least $5$ edges,
and if $D$ has exactly $5$ edges then the 3 interior
vertices can not be of valence 3,
and again we obtain $\delta(D) \le 0$.

In conclusion we have $\sum \delta(D) \le 0$, which is
contradictory with Lemma \ref{disk}.
We conclude that the normal subgroup generated by $f$ in Aut$[ \C^2 ]$ can not be equal to
$G$. 
\end{proof}

\section{The remaining cases: length 10 and
12}\label{sec:10-12}

In this section we present some of the problems that await
 the reader who would like to extend our results to the case
of an automorphism of length 10 or 12, along with two
striking examples of configuration in the Bass-Serre tree.
 
\subsection{Length 12}

The main problem to adapt our strategy to the case of $f$
with $\lon(f) =12$ is
that we have to deal with regions in a $R(f)$-diagram that
are triangles with 3 edges of length 4.
Then we would have to study not only tripods  coming from 3 conjugates of $f$,
but their generalization, which we call $n$-pods, coming from
$n$ conjugates $f_i$ ($0 \leq i \leq n-1$) of $f$.
It is the case where the geodesics $\geo(f_i)$ have a common vertex
and where each pair $\geo(f_i), \geo(f_{i+1})$ has at least one
edge in common (where $i = 0, ..., n-1$
and the index are taken modulo $n$). Precisely to be sure
that the curvature of such a triangle is non positive
it would be sufficient to have the following 

\begin{lemconj}\label{lem:conj}
 If $n$ conjugates of $f$ form a $n$-pod in the Bass-Serre
tree, with two consecutive branches of length 4, then $n \ge
6$. 
\end{lemconj}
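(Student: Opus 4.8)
The plan is to argue by contradiction: suppose a configuration of $n$ conjugates $f_0,\dots,f_{n-1}$ of $f$ forms an $n$-pod with common center $v$, all branches $\geo(f_i)\cap\geo(f_{i+1})$ nontrivial, and two consecutive branches of length $4$; we want to show $n\ge 6$. First I would normalise: by Proposition~\ref{prop:geo}, every branch of an $n$-pod built from two conjugates of an automorphism satisfying $(C2)$ has length at most $4$, and — exactly as in the proof of Lemma~\ref{lem:tripod} — if the center $v$ is of type $A$ then the branches meeting there have length at most $2$, so we may assume $v$ is of type $E$. After conjugating, set $v=\id E$; by the argument of Lemma~\ref{lem:tripod}, each $f_i$ may be chosen so that the conjugating element lies in $E$, hence fixes $\id E$, and the branch through $\id E$ has a well-defined color. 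Since the $l$ colors on $\geo(f)$ are \emph{independent} (Definition~\ref{independentsequence}), and any color supported by the $n$-pod at $\id E$ is a color appearing on some $\geo(f_i)$, I would want to show all these colors are forced to be equal — this is where Lemma~\ref{algebraiccriterionofmixture} and the degree inequalities enter, generalising from three branches to $n$.

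The key step is a ``budget'' estimate on the branch of length $4$. Say the branch $\geo(f_0)\cap\geo(f_1)$ has length $4$ with midpoint $a(0)E$ (the only possibility, by Lemma~\ref{subtreefixedbyatranslation}), so both $f_0$ and $f_1$ fix this general path of $4$ edges centered on $\id E$... more precisely, the conjugating elements both fix it, so by Remark~\ref{subtreefixedbyatranslationbis} they are translations of the form $(x+c,y)$. I would then run the computation at the end of the proof of Lemma~\ref{lem:tripod}: an equality of the form $\deg(a_1P(y)+P(b_1y+c_1)-P(b_1y+c_1-c))\le 1$ is impossible when $a_1\ne 0$, which rules out the ``return'' of one branch onto another too quickly. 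The point is that having one branch of length $4$ pins down the two adjacent conjugates tightly enough (they differ by a vertical translation), so the \emph{next} branch on each side can have length at most $2$, and in fact the colors of all branches around $\id E$ must coincide; independence of the $l$ colors of $\geo(f)$ then forces all the $f_i$ to traverse the same color at $\id E$. Counting: going around $\id E$, the cyclic sequence of branches has total "angular length'' constrained — a length-$4$ branch occupies $4$ half-edges of the star at $\id E$ on each side, and the remaining branches each contribute at least $2$ — so that $4+4+2(n-2)\cdot\tfrac12 \ge \#(\text{edges at }\id E \text{ used})$, and since the whole configuration is an $n$-pod (the branches are cyclically arranged and pairwise distinct beyond $v$) one gets a lower bound forcing $n\ge 6$.

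I expect the main obstacle to be precisely the combinatorial bookkeeping of how many distinct ``directions'' at the central vertex $\id E$ are consumed by a length-$4$ branch versus a length-$\le 2$ branch, together with verifying that the rigidity argument (two conjugates sharing a general length-$4$ path differ by a vertical translation, hence the $(C2)$ degree obstruction applies to the adjacent branches) really does cascade all the way around the cycle and not just to the immediate neighbours. A secondary difficulty is making precise the generalisation of Lemma~\ref{algebraiccriterionofmixture} from a tripod to an $n$-pod: one needs that if the color at $\id E$ along $\geo(f_{i_3})$ is a ``mixture'' obtained by composing the relabellings coming from $\geo(f_{i_1})$ and $\geo(f_{i_2})$ through the intermediate vertices of the $n$-pod, the same symmetric degree identity $\deg\sum_{1\le k\le 3}\alpha_k P_k(\beta_k y+\gamma_k)\le 1$ results, and then independence kicks in exactly as in Lemma~\ref{lem:tripod}. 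Once all central colors are identified and the $f_i$ are shown to be vertical-translation conjugates of one another near $\id E$, the length-$4$ branch together with the degree obstruction of $(C2)$ should leave no room for fewer than six branches, giving the desired contradiction.
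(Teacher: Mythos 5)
First, a point of reference: the paper does not prove this statement. It is deliberately labelled ``Lemma/conjecture'', and the authors state explicitly that they believe it but that the verification ``seems to have to involve a very long list of cases'', so they decline to present a proof. There is therefore no argument in the paper to compare yours against; your proposal has to stand on its own, and as written it does not.

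There are two concrete gaps. The decisive one is your intermediate claim that a branch of length $4$ forces the \emph{adjacent} branches to have length at most $2$: this is flatly contradicted by the paper's own Example \ref{exple:6pod}, which exhibits a $6$-pod in which \emph{all six} branches have length $4$ (consecutive ones included). Indeed, if your claim were true, the hypothesis of the statement (two consecutive branches of length $4$) could never occur and the conjecture would be vacuous, whereas the example shows the bound $n\ge 6$ is meant to be sharp. The computation at the end of Lemma \ref{lem:tripod} does not cascade the way you hope; what actually controls the configuration are arithmetic relations among the translation constants (the conditions $c_i+c_{i+1}+c_{i+2}=0$ in Example \ref{exple:6pod}), and your plan never engages with these. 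The second gap is the final counting step: the inequality $4+4+2(n-2)\cdot\tfrac12\ge \#(\text{edges at }\id E)$ is not meaningful, because each branch of an $n$-pod emanates from the center along exactly one edge of the star at $\id E$ regardless of its length --- a length-$4$ branch does not ``occupy'' more directions at the center than a length-$1$ branch --- so no lower bound on $n$ can come from such a count. (You also conflate the midpoint $a(0)E$ of the length-$4$ branch with the center $\id E$ of the $n$-pod; in Lemma \ref{lem:tripod} these are distinct vertices, and it is the conjugating element, not $f_0$ or $f_1$ themselves, that fixes the branch.) The identification of all central colors via independence is the one part that does generalise from Lemma \ref{lem:tripod}, but by itself it yields no bound on $n$.
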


We believe that this result is true, but the verification
seems to have to involve a very long list of cases:
that is why we do not think reasonable to try to present a
proof.
However it is interesting to note that there exists $6$-pods
with branches of length 4.

\begin{exple}[6-pod with all branches of length
4]\label{exple:6pod}

\begin{figure}[ht]
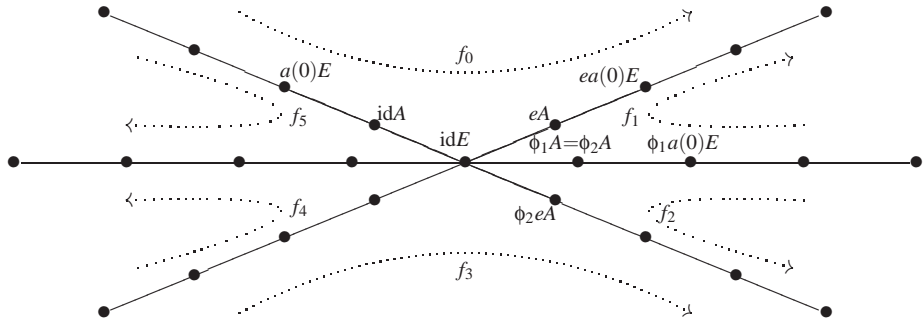

$$\xygraph{
!{<0cm,0cm>;<1.5cm,0cm>:<0cm,1cm>::}
!{(-3.2,2)}*{\bullet} ="h-4" !{(-2.4,1.5)}*{\bullet} ="h-3"
!{(-1.6,1)}*{\bullet} ="h-2" !{(-.8,.5)}*{\bullet} ="h-1"
!{(+3.2,2)}*{\bullet} ="h+4" !{(+2.4,1.5)}*{\bullet} ="h+3"
!{(+1.6,1)}*{\bullet} ="h+2" !{(+.8,.5)}*{\bullet} ="h+1"
!{(-3.2,-2)}*{\bullet} ="b-4" !{(-2.4,-1.5)}*{\bullet}
="b-3"
!{(-1.6,-1)}*{\bullet} ="b-2" !{(-.8,-.5)}*{\bullet} ="b-1"
!{(+3.2,-2)}*{\bullet} ="b+4" !{(+2.4,-1.5)}*{\bullet}
="b+3"
!{(+1.6,-1)}*{\bullet} ="b+2" !{(+.8,-.5)}*{\bullet} ="b+1"
!{(-4,0)}*{\bullet} ="-4" !{(-3,0)}*{\bullet} ="-3"
!{(-2,0)}*{\bullet} ="-2" !{(-1,0)}*{\bullet} ="-1"
!{(0,0)}*{\bullet} ="0" 
!{(+4,0)}*{\bullet} ="+4" !{(+3,0)}*{\bullet} ="+3"
!{(+2,0)}*{\bullet} ="+2" !{(+1,0)}*{\bullet} ="+1"
!{(-2,2)} ="f0i" !{(2,2)} ="f0f"
!{(3,.5)} ="f1i" !{(2.9,1.4)} ="f1f"
!{(3,-.5)} ="f2i" !{(2.9,-1.4)} ="f2f"
!{(-2,-2)} ="f3i" !{(2,-2)} ="f3f"
!{(-3,-.5)} ="f4i" !{(-2.9,-1.4)} ="f4f"
!{(-3,.5)} ="f5i" !{(-2.9,1.4)} ="f5f"
"h-4"-"b+4" "b-4"-"h+4" "-4"-"+4"
"h-2"-^<{a(0) E}"h-1" "h+1"-^>(0.85){e a(0) E}"h+2"
"h-1"-^<{\id A}^>(0.85){\id E}"0"
"0"-_>{\phi_2eA}"b+1" "0"-^>{eA}"h+1"
"0"-^>{\phi_1 A = \phi_2 A}"+1"
"+1"-^>{\phi_1 a(0)E}"+2" 
"f0i"-@{.>}@/_0.8cm/^{f_0}"f0f"
"f1i"-@{.>}@/^2cm/^{f_1}"f1f"
"f2i"-@{.>}@/_2cm/^{f_2}"f2f"
"f3i"-@{.>}@/^0.8cm/_{f_3}"f3f"
"f4f"-@{.>}@/_2cm/_{f_4}"f4i"
"f5f"-@{.>}@/^2cm/^{f_5}"f5i"
}$$
\caption{A 6-pod with all branches of length 4 (example
\ref{exple:6pod})} 
\label{fig:6pod}
\end{figure}

Let us consider the following automorphism $f_0$ of length $2l \ge 8$:
$$f_0 = e_1 a e_2 a \cdots e_l a$$
where $a = a(0) = (y,-x)$. We suppose that $ e_1 = (x+P(y),y)$, and we set $e =e_1$. 
We are going to construct $f_1, \cdots, f_5$ five conjugates
of $f_0$
such that  their geodesics form a 6-pod (see Figure
\ref{fig:6pod}).

For $i = 1, \cdots, 5$ we choose constants $c_i \neq 0$ and we
set $t_i = (x,y+c_i)$. We take $f_i = \phi_i f_0
\phi_i^{-1}$ where
\begin{eqnarray*}
\phi_1 & = & e t_1 e^{-1}  \\
\phi_2 & = & e t_1  e^{-1} t_2 \\
\phi_3 & = & e t_1  e^{-1} t_2 e t_3 e^{-1} \\
\phi_4 & = & e t_1  e^{-1} t_2 e t_3 e^{-1} t_4 \\
\phi_5 & = & e t_1  e^{-1} t_2 e t_3 e^{-1} t_4 e t_5 e^{-1}
\end{eqnarray*}
are all elements of $E$. 

We claim that for each $i = 0, \cdots, 4$,  the geodesics of $f_i$ and $f_{i+1}$ share a path of 4 edges with $\id E$ as an extremity.

Consider the case $i = 0$. We have $\geo(f_1) = \phi_1(\geo(f_0))$. Recall that $t_1$ fixes the ball of radius $2$ centered on $a(0)E$ (Remark \ref{subtreefixedbyatranslationbis}), so $\phi_1$ fixes the ball of radius $2$ centered on $ea(0)E$, hence the claim.

Now take $i = 1$. Note that $f_2 = \phi_1 t_2 f_0 t_2^{-1} \phi_1^{-1} =
\phi_1 t_2 \phi_1^{-1} f_1  \phi_1 t_2^{-1} \phi_1^{-1}$, and $\phi_1 t_2 \phi_1^{-1}$ fixes the ball of radius 2 centered at $\phi_1 a(0) E$. 
Thus the geodesic of $f_1$ and $f_2$ share 4 edges.
We can make a similar computation for $i = 2,3,4$.  
 
Suppose now that the constants $c_i$ satisfy:
\begin{eqnarray*}
c_1 +  c_2 + c_3 &=& 0 \\
c_2 +  c_3 + c_4 &=& 0 \\
c_3 +  c_4 + c_5 &=& 0 
\end{eqnarray*}
For instance one can take $(c_1, c_2, c_3 , c_4 , c_5) =
(1,1,-2,1,1)$.

A straightforward computation shows that
 $$\phi_5 = et_1e^{-1}t_2et_3e^{-1}t_4et_5e^{-1}= $$
$$ (x  + P(y + c_1 + c_2 + c_3 + c_4 + c_5 ) - 
 P(y +  c_2 + c_3 + c_4 + c_5 ) +  P(y + c_3 + c_4 + c_5 )
$$
$$ -  P(y + c_4 + c_5 )
+P(y +  c_5 ) - P(y ), y +c_1 + c_2 + c_3 + c_4 + c_5  ) =
(x, y - c_3).$$
Since $(x,y-c_3)$ fixes the ball of radius 2 centered at $a(0)E$, this implies that the geodesics of $f_0$ and $f_5$ share
4 edges, as shown on Fig. \ref{fig:6pod}. 
\end{exple}

\subsection{Length 10}

The case of $f$ of length $10$ seems even more doubtful.
For instance one could have pentagonal regions with all
edges of length $2$ and all vertices of valence $3$.
It is probably easy to rule out this case, but there are
some harder ones.
One could have triangular regions with edges of length
$4,4,2$.
The  example \ref{exple:6pod} allows us to glue $6$ such
triangles along their edge of length $4$,
to obtain a $R(f)$-diagram with boundary length $12$.
One can wonder if it possible to glue two such diagrams to
obtain a $R(f)$-diagram on a sphere
(in this case our strategy would fail). One would need to
have 4-pods with branches $4,2,4,2$.
We do not know if this is possible,
but the following example shows  again that we would have to
rely on very careful computations
to exclude this case (note also that the
assumption 'consecutive' was crucial in the statement of
Lemma \ref{lem:conj})   

\begin{exple}[4-pod with branches of length 4, 1, 4,
1]\label{exple:4pod}

Similarly to the previous example we take $f_i = \phi_i f_0
\phi_i^{-1}$ where
\begin{eqnarray*}
\phi_1 & = & e t_1 e^{-1}  \\
\phi_2 & = & e t_1  e^{-1} t_2 \\
\phi_3 & = & e t_1  e^{-1} t_2 e t_3 e^{-1} \\
\end{eqnarray*}
with $t_1 = t_3 = (x+c,y)$ and $t_2 = (-x, y -c)$. Then one
can verify that 
$\phi_3 = (-x, y+c)$ and the geodesics of the $f_i$ form a
4-pod as on Figure \ref{fig:4pod}.
\end{exple}

\begin{figure}[ht]
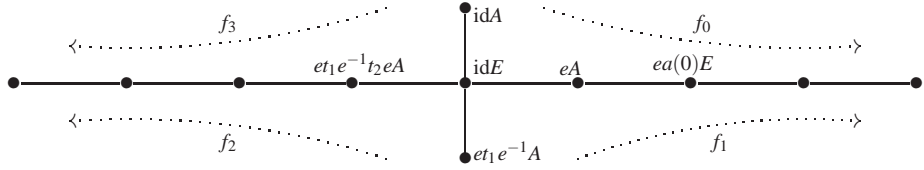

$$\xygraph{
!{<0cm,0cm>;<1.5cm,0cm>:<0cm,1cm>::}
!{(0,1)}*{\bullet} ="haut" !{(0,-1)}*{\bullet} ="bas"
!{(-4,0)}*{\bullet} ="-4" !{(-3,0)}*{\bullet} ="-3"
!{(-2,0)}*{\bullet} ="-2" !{(-1,0)}*{\bullet} ="-1"
!{(0,0)}*{\bullet} ="0" 
!{(+4,0)}*{\bullet} ="+4" !{(+3,0)}*{\bullet} ="+3"
!{(+2,0)}*{\bullet} ="+2" !{(+1,0)}*{\bullet} ="+1"
!{(0.7,1)} ="f0i" !{(3.5,.5)} ="f0f"
!{(1,-1)} ="f1i" !{(3.5,-.5)} ="f1f"
!{(-0.7,-1)} ="f2i" !{(-3.5,-.5)} ="f2f"
!{(-0.7,1)} ="f3i" !{(-3.5,.5)} ="f3f"
"-4"-"-3" "0"-^>{et_1e^{-1}A}"bas" "haut"-^<{\id
A}^>(0.85){\id E}"0"
"-3"-"-2" "-2"-"-1" "-1"-^<{et_1e^{-1}t_2eA}"0"
"0"-^>{eA}"+1"
"+1"-^>{ea(0)E}"+2" "+2"-"+3" "+3"-"+4"
"f0i"-@{.>}@/_0.2cm/^{f_0}"f0f"
"f1i"-@{.>}@/^0.2cm/_{f_1}"f1f"
"f2i"-@{.>}@/_0.2cm/^{f_2}"f2f"
"f3i"-@{.>}@/^0.2cm/_{f_3}"f3f"
}$$
\caption{A 4-pod with branches 4,1,4,1 (example
\ref{exple:4pod})} \label{fig:4pod}
\end{figure}

\section*{Annex: genericness of condition $(C2)$ } \label{annex}

We begin with a reformulation of Theorem \ref{thm:B}:

\begin{thm} \label{thestrongesttheorem}
Let $l \geq 7$ be an integer.
Assume that the polynomials $P_1,\ldots, P_l \in \C[y]$
are general and independent.
If the element $f$ of $G$ can be written $f=a_1e_1 \ldots a_le_l$
where $e_i=e(P_i)$ and
$a_i \in  A \setminus E$ for each $i$,
then the normal subgroup generated by $f$ in Aut$[\C^2]$
is different from $G$.
\end{thm}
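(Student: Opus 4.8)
The plan is to read off Theorem \ref{thestrongesttheorem} as an immediate consequence of Theorem \ref{thm:B}: the substance has already been established, and what remains is to translate the hypotheses on $(P_1,\dots,P_l)$ into the statement that $f$ is a hyperbolic element of geometric length $\geq 14$ satisfying condition $(C2)$.

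First I would check hyperbolicity and the value of the length. Since each $P_i$ is general, it has degree $\geq 5$, so $e_i = e(P_i) = (x+P_i(y),y)$ lies in $E \setminus A$; by hypothesis each $a_i$ lies in $A \setminus E$. Hence the word $a_1 e_1 \cdots a_l e_l$ is alternating, so it is reduced and even strictly cyclically reduced, $|f| = 2l$, and by subsection \ref{ellipticandhyperbolicelements} the element $f$ is hyperbolic with $\lon(f) = |f| = 2l$, which is $\geq 14$ because $l \geq 7$. Moreover $\geo(f)$ contains the path $\id A - a_1 E - a_1 e_1 A - \cdots - a_1 e_1 \cdots a_l e_l A$ together with its $f^k$-translates.

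Next I would apply the Fundamental Example \ref{exple:fundamental}: the geodesic of $f$ carries exactly the $l$ colors $(A\cap E)\, e(P_i)\, (A\cap E)$, repeated periodically. By Definition \ref{def:general} each such color is general precisely because $P_i$ is general, and by Definition \ref{independentsequence} the family of these colors is independent precisely because $(P_1,\dots,P_l)$ is independent; by Definition \ref{def:C2} this means $f$ satisfies condition $(C2)$. Since $f \in G$ by hypothesis, Theorem \ref{thm:B} now applies directly and yields that the normal subgroup generated by $f$ in $\Aut[\C^2]$ is different from $G$. There is essentially no obstacle here: the hard work lies in Theorem \ref{thm:B} itself and in the tripod estimates (Lemmas \ref{lem:tripod} and \ref{lem:valence3}) together with the description of the subtrees fixed by translations; the only point deserving a word is that ``general'' already forces $\deg P_i \geq 5 > 1$, hence $e(P_i)\notin A$, which is what guarantees $f$ is genuinely hyperbolic of length exactly $2l$ rather than something shorter hidden by a cancellation.
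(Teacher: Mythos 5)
Your proposal is correct and matches the paper's intent exactly: the paper presents Theorem \ref{thestrongesttheorem} as a direct reformulation of Theorem \ref{thm:B}, and the only content is the translation you carry out (the alternating word $a_1e_1\cdots a_le_l$ is strictly cyclically reduced, so $f$ is hyperbolic with $\lon(f)=2l\geq 14$, and by the Fundamental Example \ref{exple:fundamental} its geodesic carries the colors $(A\cap E)e(P_i)(A\cap E)$, which are general and independent, i.e.\ condition $(C2)$ holds). Your remark that generality forces $\deg P_i\geq 5$, hence $e(P_i)\in E\setminus A$, is the right small point to flag.
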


In this section, we will show that if $P_1, \ldots, P_l$
are generic (in some sense), then they are general and independent.
We will also finish by giving explicit examples.

\subsection*{A. Genericness of condition $(C1)$}
\label{sec:annexA}

The aim of this subsection is to show that  condition $(C1)$
is generic (see Corollary \ref{cor1} and Remark \ref{rem1}).
For technical purposes we introduce a variation of the
notion of general polynomial (see Def. \ref{def:general}).

\begin{lem} \label{equivalentassertionsofgeneralpolynomials}
Let $Q \in \C[y]$ be a polynomial.
The following assertions are equivalent:

\begin{enumerate}
\item $\forall \, \alpha, \beta, \gamma \in \C, 
\hspace{3mm} 
Q(y) = \alpha Q ( \beta y + \gamma)
\hspace{3mm} \Longrightarrow \alpha = \beta =1 \mbox{ and }
\gamma=0;$
\item $\forall \, \alpha, \beta, \gamma \in \C, 
\hspace{3mm} 
 Q(y) = \alpha Q ( \beta y + \gamma)
\hspace{3mm} \Longrightarrow \beta =1.$
\end{enumerate}
\end{lem}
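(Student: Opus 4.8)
The implication (1) $\Rightarrow$ (2) is trivial, so the plan is to establish (2) $\Rightarrow$ (1). Assuming (2), I will take an arbitrary relation $Q(y) = \alpha Q(\beta y + \gamma)$ and aim to deduce $\alpha = \beta = 1$ and $\gamma = 0$; since $\beta = 1$ is handed to us directly by (2), the real work is to extract $\alpha = 1$ and $\gamma = 0$ from there.

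First I would record a preliminary consequence of (2): the polynomial $Q$ must be non-constant. Indeed, if $Q$ were zero or a non-zero constant, then $Q(y) = 1 \cdot Q(2y)$ holds while $\beta = 2 \neq 1$, contradicting (2). Hence $d := \deg Q \geq 1$ and the leading coefficient $c_d$ of $Q$ is non-zero; this will be used repeatedly.

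Now, given $Q(y) = \alpha Q(\beta y + \gamma)$: if $\alpha = 0$ then $Q = 0$, and if $\beta = 0$ the right-hand side is the constant $\alpha Q(\gamma)$ — both absurd since $d \geq 1$. Thus $\alpha\beta \neq 0$, so (2) applies and yields $\beta = 1$, i.e. $Q(y) = \alpha Q(y+\gamma)$. Comparing the coefficients of $y^d$ on both sides gives $c_d = \alpha c_d$, whence $\alpha = 1$ and $Q(y) = Q(y+\gamma)$. Finally, if $\gamma \neq 0$ then $Q$ would take the value $Q(0)$ at every point of the infinite set $\{\, n\gamma : n \in \N \,\}$ and would therefore be constant, a contradiction; so $\gamma = 0$. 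This establishes (1).

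There is no genuine obstacle here: the core is the standard "equate leading coefficients, then invoke non-periodicity of a non-constant complex polynomial" argument. The only point deserving a little care is that assertion (2) must be exploited twice — once on the test relation $Q(y) = Q(2y)$ to force $\deg Q \geq 1$, and once on the relation at hand to pin down $\beta$ — before the leading-coefficient comparison can even be set up.
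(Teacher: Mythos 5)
Your proof is correct and follows essentially the same route as the paper: first rule out constant $Q$ using assertion (2), reduce to $Q(y)=\alpha Q(y+\gamma)$, and then derive $\gamma=0$ from the fact that a nonzero polynomial cannot have the infinite family $\zeta+n\gamma$ (equivalently, be $\gamma$-periodic) unless $\gamma=0$. The only cosmetic difference is that you extract $\alpha=1$ by comparing leading coefficients, whereas the paper gets it for free from $Q=\alpha Q$ once $\gamma=0$ is known.
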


\begin{proof} (1) $\Rightarrow$ (2) is obvious.
Let us prove (2) $\Rightarrow$ (1).
If $Q$ satisfies (2), note that $Q$ can not be constant.
If $Q(y) = \alpha Q (  y + \gamma)$, 
it is enough to show that $\gamma = 0$.
Let $\zeta$ be a root of $Q$.
Since $\zeta + n \gamma$ is also a root of $Q$ for any
integer $n$, we must have $\gamma = 0$.  
\end{proof}

\begin{defi}
We say that $Q$ is \textbf{weakly general} if it satisfies
the equivalent assertions
of Lemma \ref{equivalentassertionsofgeneralpolynomials}.
\end{defi}

\begin{rem} 
\label{rem:general}
Clearly if $Q^{'}$ is weakly general then $Q$ is also weakly
general. 
Furthermore, $Q^{(k)}$ is weakly general if and only if
the following equivalent assertions are satisfied:
\begin{enumerate}
\item $\forall \, \alpha, \beta, \gamma \in \C, 
\hspace{3mm} 
\deg ( Q(y) - \alpha Q ( \beta y + \gamma)) < k
\hspace{3mm} \Longrightarrow \alpha = \beta =1 \mbox{ and }
\gamma=0$;
\item $\forall \, \alpha, \beta, \gamma \in \C, 
\hspace{3mm} 
\deg ( Q(y) - \alpha Q ( \beta y + \gamma)) < k
\hspace{3mm} \Longrightarrow \beta =1$.
\end{enumerate}
In other words, a polynomial $Q$ of degree $d \ge 5$
is general if and only if $Q^{(d-3)}$ is weakly general.
\end{rem}

\begin{lem} 
\label{lem:weaklygeneral}
The following assertions are equivalent:
\begin{enumerate}
\item $Q$ is not weakly general;
\item there exists $c \in \C$, $R \in \C [y]$,
$k \geq 0, n \geq 2$  such that $Q(y+c) =y^k
R(y^n)$.
\end{enumerate}
\end{lem}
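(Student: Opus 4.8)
The statement to prove is the equivalence in Lemma~\ref{lem:weaklygeneral}: $Q$ is not weakly general if and only if there exist $c \in \C$, $R \in \C[y]$, $k \geq 0$ and $n \geq 2$ such that $Q(y+c) = y^k R(y^n)$.

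The plan is to work with the second formulation of weak generality from Lemma~\ref{equivalentassertionsofgeneralpolynomials}, namely that $Q$ is weakly general iff the only way to write $Q(y) = \alpha Q(\beta y + \gamma)$ forces $\beta = 1$. So $Q$ is \emph{not} weakly general precisely when there exist $\alpha, \gamma \in \C$ and $\beta \neq 1$ with $Q(y) = \alpha Q(\beta y + \gamma)$.

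For the implication (2)$\Rightarrow$(1): if $Q(y+c) = y^k R(y^n)$, set $P(y) = Q(y+c)$. Then for any $n$-th root of unity $\zeta \neq 1$ we have $P(\zeta y) = \zeta^k y^k R(\zeta^n y^n) = \zeta^k P(y)$, so $P(y) = \zeta^{-k} P(\zeta y)$, which translates back via $P(y) = Q(y+c)$ into an identity $Q(y) = \alpha Q(\beta y + \gamma)$ with $\beta = \zeta \neq 1$; hence $Q$ is not weakly general. The bookkeeping here is routine: one just conjugates the dilation $y \mapsto \zeta y$ by the translation $y \mapsto y + c$.

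For the converse (1)$\Rightarrow$(2), suppose $Q(y) = \alpha Q(\beta y + \gamma)$ with $\beta \neq 1$. First note $Q$ cannot be zero or we'd be in a degenerate situation; comparing leading coefficients forces $\alpha \beta^d = 1$ where $d = \deg Q$, and comparing the map $y \mapsto \beta y + \gamma$ with its fixed point: since $\beta \neq 1$, the affine map $\sigma(y) = \beta y + \gamma$ has a unique fixed point $y_0 = \gamma/(1-\beta)$. Set $c = -y_0$ and $P(y) = Q(y - c) = Q(y + y_0)$ so that in the new coordinate the substitution becomes the pure dilation $y \mapsto \beta y$: one checks $P(y) = \alpha P(\beta y)$. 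Now I would argue $\beta$ must be a root of unity: iterating gives $P(y) = \alpha^m P(\beta^m y)$ for all $m \geq 0$, and from $\alpha \beta^d = 1$ the coefficient of $y^j$ in $P$, call it $p_j$, satisfies $p_j = \alpha \beta^j p_j = \beta^{j-d} p_j$, so $p_j \neq 0$ forces $\beta^{j-d} = 1$. Since $P \neq 0$ has at least one nonzero coefficient $p_{j_0}$, we get $\beta^{j_0 - d} = 1$, so $\beta$ is a root of unity, say of exact order $n$; and $n \geq 2$ since $\beta \neq 1$. Then $p_j \neq 0$ implies $j \equiv d \pmod n$, so writing $k$ for the residue of $d$ mod $n$ with $0 \le k < n$ (or more simply the smallest index with $p_k \ne 0$), every monomial of $P$ has exponent $\equiv k \pmod n$, which means $P(y) = y^k R(y^n)$ for some polynomial $R$. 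Translating back, $Q(y+c) = P(y) = y^k R(y^n)$, as desired.

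The main obstacle — though it is more a matter of care than of depth — is the passage from ``$\beta \neq 1$'' to ``$\beta$ is a root of unity'': one must rule out $|\beta| \neq 1$ and irrational rotations, which is exactly what the coefficient relation $\beta^{j-d} = 1$ does once we know $P$ is a genuine polynomial with a nonzero coefficient. The only subtlety is handling the cases where $Q$ is constant (excluded, since a nonzero constant is weakly general and zero can be dealt with directly) and making sure the fixed-point normalization is legitimate, which it is precisely because $\beta \neq 1$. Everything else is direct computation.
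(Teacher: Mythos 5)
Your overall strategy is the same as the paper's: conjugate the affine substitution by the translation to its fixed point $\gamma/(1-\beta)$, reduce to the identity $P(y)=\alpha P(\beta y)$ with $P(y)=Q(y+c)$, and read off conditions on the coefficients $p_j$. The coefficient relation $p_j=\alpha\beta^j p_j$ and the conclusion $P(y)=y^kR(y^n)$ when $\beta$ is a primitive $n$-th root of unity are exactly the paper's computation.

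However, there is a genuine flaw in the step where you conclude that $\beta$ is a root of unity. From $\alpha\beta^d=1$ you get $p_j\neq 0\Rightarrow\beta^{j-d}=1$, and you then invoke ``at least one nonzero coefficient $p_{j_0}$'' to deduce $\beta^{j_0-d}=1$, hence $\beta$ is a root of unity. But the guaranteed nonzero coefficient is $p_d$ itself, for which $\beta^{j_0-d}=\beta^0=1$ is vacuous. The inference fails precisely when $P$ is a monomial: for $Q(y)=(y-1)^5$ one has $Q(y)=2^{-5}Q(2y-1)$ with $\beta=2$ not a root of unity, yet $P(y)=y^5$. The lemma's conclusion still holds there (a monomial is $y^kR(y^n)$ with $R$ constant and any $n\geq 2$), but your argument as written does not reach it; the paper explicitly splits into the two cases ``$\beta$ not a root of unity, hence $P=p_ky^k$'' and ``$\beta$ a primitive $n$-th root of unity, hence $P=y^kR(y^n)$''. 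The fix is one line --- either treat the monomial case separately, or take two \emph{distinct} indices $j_0\neq j_1$ with nonzero coefficients to get $\beta^{j_0-j_1}=1$ --- but it must be said. A further small slip: your parenthetical claims a nonzero constant is weakly general; it is not (take $\beta=2$, $\gamma=0$, $\alpha=1$), though constants satisfy (2) trivially with $k=0$ and $R$ constant, so this does not affect the equivalence.
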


\begin{proof} (1) $\Longrightarrow$ (2).
If $Q$ is not weakly general, there exists $\alpha, \beta,
\gamma$ with $\beta \neq 1$
such that $Q(y) = \alpha Q ( \beta y + \gamma)$.
If we set $c=\frac{\gamma}{1- \beta}$, then the polynomial
$P(y)=Q(y+c)$
satisfies $P(y) = \alpha P ( \beta  y)$.
Writing $P= \sum_i p_i y^i$, the last equation is equivalent
to $\forall i, (1- \alpha \beta ^i) p_i=0$.
If $\beta$ is not a root of unity, this implies that there
exists  $k \geq 0$ such that $P= p_k y^k$.
Assume now that $\beta$ is a primitive $n$th root of
the unity. If $P \neq 0$, there exists $k \geq 0$
such that $p_k \neq 0$ and so $\alpha = \beta^{-k}$.
Since $p_i \neq 0$ implies $i \equiv k \pmod n$,
we get $P= y^k R(y^n)$, where $R(y)= \sum_i p_{k+ni}y^i$.

(2) $\Longrightarrow$ (1).
This is a consequence of the previous computation. 
\end{proof}

\begin{prop}
\label{prop:gen1}
\begin{enumerate}
\item 
If $d \geq 3$, the generic element of $\C [y] _{\leq \, d}$
is weakly general;
\item 
If $d \geq 5$, the generic element of $\C [y] _{\leq \, d}$
is  general.
\end{enumerate}
\end{prop}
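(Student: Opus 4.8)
The plan is to prove both statements by exhibiting, in each case, a proper Zariski-closed subset of the affine space $\C[y]_{\le d}$ (which has dimension $d+1$) that contains the set of polynomials for which the property fails; its complement is then a non-empty Zariski-open set, which is exactly what genericness means.

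For (1) I would start from the characterization in Lemma \ref{lem:weaklygeneral}: a polynomial $Q$ is \emph{not} weakly general precisely when there are $c \in \C$, integers $k \ge 0$, $n \ge 2$ and $R \in \C[y]$ with $Q(y+c) = y^k R(y^n)$. For fixed $k$ and $n$, the polynomials of the form $y^k R(y^n)$ lying in $\C[y]_{\le d}$ form the linear subspace $V_{k,n}$ spanned by the monomials $y^k, y^{k+n}, y^{k+2n}, \dots$ of degree at most $d$; its dimension is $\lfloor (d-k)/n\rfloor + 1$, which is at most $\lfloor d/2\rfloor + 1$ (equality for $n=2$, $k=0$). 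Substituting $y \mapsto y-c$ is a linear automorphism of $\C[y]_{\le d}$ preserving degree, so the set of $Q$ with $Q(y+c) \in V_{k,n}$ for some $c$ is the image of the morphism $\C \times V_{k,n} \to \C[y]_{\le d}$, $(c,P) \mapsto P(y-c)$, whose Zariski closure is irreducible of dimension at most $1 + \lfloor d/2 \rfloor + 1$. Since $y^k = y^k R(y^n)$ with $R$ constant, taking $n > d$ produces only subspaces already contained in $V_{k,2}$, so the non-weakly-general locus is the finite union of these closures over $0 \le k \le d$ and $2 \le n \le d$.

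The key point is then the numerical inequality $\lfloor d/2 \rfloor + 2 < d+1$, which holds if and only if $\lfloor d/2\rfloor \le d-2$, i.e.\ for $d \ge 3$. Hence for $d \ge 3$ the non-weakly-general locus is contained in a proper Zariski-closed subset (in fact a hypersurface) of $\C[y]_{\le d}$, proving (1). This is the only non-formal step, and it is sharp: for $d = 2$ every quadratic equals $a(y-c)^2+b = y^0\cdot R(y^2)$ after a translation, so no quadratic is weakly general, which is why the hypothesis $d \ge 3$ cannot be dropped.

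For (2) I would reduce to (1) using Remark \ref{rem:general}: a polynomial $Q$ of degree $d \ge 5$ is general if and only if $Q^{(d-3)}$ is weakly general. The $(d-3)$-rd derivative gives a surjective linear map $L\colon \C[y]_{\le d} \to \C[y]_{\le 3}$ (it sends $y^{j+d-3}$ to a nonzero multiple of $y^j$ for $0 \le j \le 3$). By part (1) applied with $d = 3$, the non-weakly-general polynomials in $\C[y]_{\le 3}$ lie in a proper Zariski-closed subset $Z$, so $L^{-1}(Z)$ is a proper Zariski-closed subset of $\C[y]_{\le d}$ because $L$ is onto. Outside $L^{-1}(Z)$ and outside the proper closed subset $\{\deg Q \le d-1\}$, the polynomial $Q$ has degree $d$ and $Q^{(d-3)}$ is weakly general, hence $Q$ is general; thus the generic element of $\C[y]_{\le d}$ is general. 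The main obstacle of the whole argument is the dimension estimate in part (1); everything else is bookkeeping and formal reductions.
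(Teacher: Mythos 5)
Your proof is correct and follows essentially the same route as the paper: for (1) it uses Lemma \ref{lem:weaklygeneral} to write the non-weakly-general locus as a finite union of images of morphisms parametrized by a shift $c$ and a polynomial in a monomial subspace, and bounds the dimension by $\lfloor d/2\rfloor+2 < d+1$ (the paper parametrizes by $(c,R)\mapsto (y-c)^kR((y-c)^n)$ with $R\in\C[y]_{\le \lfloor d/n\rfloor}$, which is the same count); for (2) both arguments reduce to (1) via the surjective linear map $Q\mapsto Q^{(d-3)}$ and Remark \ref{rem:general}. Your added care about the degree-$d$ locus and the finiteness of the union of the $(k,n)$ is sound bookkeeping that the paper leaves implicit.
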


\begin{proof}
If $u \in \R$, we denote its integer part  by $[u]$.

(1) If $Q \in \C [y] _{\leq \, d}$ is not weakly general,
by Lemma \ref{lem:weaklygeneral}
we can write
$$Q(y)= (y-c)^k R \left( ( y-c)^n\right)$$
where $0 \leq k \leq d$,
$2 \leq n\leq d$, $c \in \C$, $e= [ d/n ]$ and
$R \in \C [y]_ {\leq \, e }$.
Therefore, $Q$ belongs to the image of the following morphism

$\varphi_{k,n}: \C \times \C[y]_{\leq \, e} \to \C[y]$,
$(c,R(y)) \mapsto (y-c)^k R ( ( y-c)^n)$.
However
$$\dim {\rm Im} \varphi_{k,n} \leq \dim (\C
\times \C[y]_{\leq \, e})=e+2
\leq \frac{d}{n}+2 \leq \frac{d}{2} +2 < d+1 =
\dim \C [y]_{\leq \, d} .$$

(2) is a direct consequence of (1), by considering the map
$Q \mapsto Q^{(d-3)}$, and using Remark \ref{rem:general}.  
\end{proof}

\begin{prop}
\label{prop:gen1bis}
If $d_1,d_2 \geq 5$ and $(P_1,P_2)$ is a generic element of
$\C [y]_{\leq \, d_1} \times \C [y]_{\leq \, d_2}$,
then $P_1$, $P_2$ represent different colors.
\end{prop}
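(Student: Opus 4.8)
The plan is to exhibit the set of pairs $(P_1,P_2)$ representing the \emph{same} color as (contained in) the image of a morphism whose source has dimension strictly smaller than $\dim\bigl(\C[y]_{\leq d_1}\times\C[y]_{\leq d_2}\bigr)$, and then use the standard fact that the closure of the image of a morphism of varieties has dimension at most that of the source, so is here a proper Zariski-closed subset. Its complement --- together with the complements of the proper closed subsets $\{\deg P_1<2\}$ and $\{\deg P_2<2\}$, which we throw in so that the word ``color'' makes sense --- is the desired generic locus.

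First I would invoke Lemma \ref{obviousresult}: for any pair $(P_1,P_2)$ with $\deg P_1\geq 2$ and $\deg P_2\geq 2$, the polynomials $P_1$ and $P_2$ represent the same color if and only if there exist $\alpha,\beta\in\C^*$ and $\gamma,\delta,\epsilon\in\C$ with $P_2(y)=\alpha P_1(\beta y+\gamma)+\delta y+\epsilon$. Consequently, setting
$$Z=\Bigl\{(\alpha,\beta,\gamma,\delta,\epsilon,P_1)\in\C^*\times\C^*\times\C^3\times\C[y]_{\leq d_1}\ :\ \deg\bigl(\alpha P_1(\beta y+\gamma)+\delta y+\epsilon\bigr)\leq d_2\Bigr\}$$
and considering the morphism
$$\Psi\colon Z\longrightarrow\C[y]_{\leq d_1}\times\C[y]_{\leq d_2},\qquad (\alpha,\beta,\gamma,\delta,\epsilon,P_1)\longmapsto\bigl(P_1,\ \alpha P_1(\beta y+\gamma)+\delta y+\epsilon\bigr),$$
every pair $(P_1,P_2)$ with both degrees $\geq 2$ representing the same color lies in the image $\Psi(Z)$ (the defining condition on $Z$ being satisfied since then $\alpha P_1(\beta y+\gamma)+\delta y+\epsilon=P_2\in\C[y]_{\leq d_2}$).

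Finally comes the dimension count. We have $\dim Z\leq 2+3+(d_1+1)=d_1+6$, hence $\dim\overline{\Psi(Z)}\leq d_1+6$, and since $d_2\geq 5$,
$$d_1+6<(d_1+1)+(d_2+1)=\dim\bigl(\C[y]_{\leq d_1}\times\C[y]_{\leq d_2}\bigr).$$
Therefore $\overline{\Psi(Z)}$ is a proper Zariski-closed subset, and so is its union with $\{\deg P_1<2\}$ and $\{\deg P_2<2\}$; outside this union --- that is, for a generic pair $(P_1,P_2)$ --- the polynomials $P_1$ and $P_2$ have degrees $d_1,d_2\geq 5\geq 2$ and represent different colors. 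The argument is essentially a plain dimension count, and the only point deserving a little care is to restrict $\Psi$ to the locus $Z$ where its second coordinate genuinely has degree $\leq d_2$, so that the morphism lands in the stated affine space. (One may also note, though it is not needed, that by symmetry of the color relation the hypothesis $d_1\geq 5$ alone, or $d_2\geq 5$ alone, would already suffice.)
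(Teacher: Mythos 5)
Your proof is correct and follows essentially the same route as the paper: invoke Lemma \ref{obviousresult} to realize the locus of same-colored pairs inside the image of a morphism from a parameter space of dimension $d_1+6$, then conclude by the dimension count $d_1+6<(d_1+1)+(d_2+1)$. Your restriction to the locus $Z$ where the second coordinate lands in $\C[y]_{\leq d_2}$, and your explicit handling of the degenerate loci $\{\deg P_i<2\}$, are minor tidy-ups of the same argument.
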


\begin{proof}
By Lemma \ref{obviousresult}, if $P_1,P_2$ represent the same color, then
$(P_1,P_2)$ belongs to the image of the following morphism:
$\varphi : \C [y]_{\leq \, d_1} \times \C^5 \to \C [y] \times \C [y]$,
$(P_1, (\alpha, \beta,\gamma,\delta, \epsilon)) \mapsto
(P_1, \alpha P_1( \beta y + \gamma) + \delta y + \epsilon)$.
However, 
\vskip1mm
\hspace{25mm}{$\dim {\rm Im} \varphi \leq d_1 +6 <
\dim \C [y]_{\leq \, d_1} \times \C [y]_{\leq \, d_2}.$} \hfill $\square$
\end{proof}

\begin{rem}
If $d_1 \neq d_2$, Proposition \ref{prop:gen1bis} is still more obvious.
Indeed, the generic element $P_i$ of $\C [y]_{\leq \, d_i}$ has degree $d_i$.
Therefore, if $(P_1,P_2)$ is a generic element of
$\C [y]_{\leq \, d_1} \times \C [y]_{\leq \, d_2}$,
then $\deg P_1 \neq \deg P_2$,
which clearly implies that $P_1$, $P_2$ represent different colors.
\end{rem}

Propositions \ref{prop:gen1} and \ref{prop:gen1bis} give us the following result.

\begin{cor} \label{cor1}
Fix a sequence of integers $d_1,\ldots,d_l \geq 5$.
If  $(P_1, \cdots, P_l)$
is a generic element of $\prod_{1 \, \leq \, i \, \leq \, l} \C [y]_{\leq \, d_i}$,
then the polynomials $P_i$ are general and represent distinct colors.
\end{cor}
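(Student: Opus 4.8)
The plan is to deduce the statement immediately from Propositions \ref{prop:gen1} and \ref{prop:gen1bis}, using nothing more than the elementary fact that a finite intersection of nonempty Zariski-open subsets of an irreducible variety is again a nonempty (hence dense) Zariski-open subset, and that affine space is irreducible.

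First I would fix notation. Write $\mathcal{A}= \prod_{1 \, \leq \, i \, \leq \, l} \C[y]_{\leq \, d_i}$, and for each index $i$ let $\pi_i \colon \mathcal{A} \to \C[y]_{\leq \, d_i}$ denote the $i$-th coordinate projection, while for each pair $i<j$ let $\pi_{ij} \colon \mathcal{A} \to \C[y]_{\leq \, d_i} \times \C[y]_{\leq \, d_j}$ denote the projection onto those two factors; these are surjective morphisms of affine spaces. By Proposition \ref{prop:gen1}(2), applicable since $d_i \geq 5$, there is a nonempty Zariski-open subset $U_i \subseteq \C[y]_{\leq \, d_i}$ all of whose elements are general. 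By Proposition \ref{prop:gen1bis}, applicable since $d_i, d_j \geq 5$, there is a nonempty Zariski-open subset $W_{ij} \subseteq \C[y]_{\leq \, d_i} \times \C[y]_{\leq \, d_j}$ all of whose elements $(P_i,P_j)$ represent distinct colors. Then each $\pi_i^{-1}(U_i)$ and each $\pi_{ij}^{-1}(W_{ij})$ is a nonempty Zariski-open subset of $\mathcal{A}$, and setting $V = \bigcap_{1 \leq i \leq l} \pi_i^{-1}(U_i) \cap \bigcap_{1 \leq i < j \leq l} \pi_{ij}^{-1}(W_{ij})$, the irreducibility of $\mathcal{A}$ forces this finite intersection of nonempty opens to be a nonempty dense Zariski-open subset. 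Every $(P_1, \ldots, P_l) \in V$ has all its components general and all pairs of its components representing distinct colors, which is exactly the assertion of the corollary.

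Honestly there is no real obstacle in this step: all the geometric content sits inside the two propositions just quoted (the dimension counts of subsection A), and the present statement is merely the bookkeeping that assembles them. The one point deserving a word of care is that being \emph{general} in the sense of Definition \ref{def:general} is a condition attached to a polynomial of a \emph{prescribed} degree $d$; this causes no difficulty, because $U_i$ is automatically contained in the locus $\{\deg P_i = d_i\}$ (the complement of this locus in $\C[y]_{\leq \, d_i}$ is a proper linear subspace, hence lies outside any dense open set), so the notion "general with respect to $d_i$" genuinely applies to every element of $U_i$. With that remark in place the argument is complete.
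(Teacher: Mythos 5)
Your argument is correct and is exactly what the paper intends: the text gives no separate proof of Corollary \ref{cor1} beyond the sentence that Propositions \ref{prop:gen1} and \ref{prop:gen1bis} imply it, and your write-up is just the explicit bookkeeping (pulling back the good open sets under the coordinate projections and intersecting finitely many nonempty Zariski-opens in an irreducible affine space). The side remark that a generic element of $\C[y]_{\leq d_i}$ has degree exactly $d_i$, so that "general" in the sense of Definition \ref{def:general} genuinely applies, is a worthwhile point of care and is handled correctly.
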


\begin{rem} \label{rem1}
In other words, if $a_i \in A \setminus E$ and $e_i =e(P_i)$ for $1 \leq i \leq l$,
then the automorphism $a_1 e_1 \ldots a_le_l$ satisfies condition $(C1)$.
\end{rem}

\subsection*{B. Genericness of condition $(C2)$}
\label{sec:annexB}

The aim of this subsection is to show that  condition $(C2)$
is generic (see Corollary \ref{cor3} and Remark \ref{rem3}).

\begin{prop}
\label{prop:gen2}
If $d_1,d_2,d_3 \geq 8$ and
$(P_1,P_2,P_3)$ is generic in
$\prod_{1 \, \leq \, i \, \leq \, 3} \C [y]_{\leq \, d_i}$,
then the polynomials $P_1,P_2,P_3$ are independent.
\end{prop}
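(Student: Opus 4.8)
The plan is to follow the dimension-counting template used for Propositions \ref{prop:gen1} and \ref{prop:gen1bis}. First I would unwind Definition \ref{independentsequence}: a triple $(P_1,P_2,P_3)$ fails to be independent precisely when there is a pattern of indices $(i_1,i_2,i_3) \in \{1,2,3\}^3$ which is \emph{not} constant, together with constants $\alpha_k,\beta_k,\gamma_k$ with $\alpha_k\beta_k \neq 0$ ($1 \le k \le 3$), such that
$$\deg\Big( \sum_{1 \,\le\, k \,\le\, 3} \alpha_k P_{i_k}(\beta_k y + \gamma_k) \Big) \,\le\, 1.$$
Permuting the dummy index $k$ does not affect this condition, and the three polynomials play symmetric roles; so up to relabelling only two patterns need to be treated, the one where $i_1,i_2,i_3$ are pairwise distinct and the one where exactly two of them coincide.

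Next, as in Lemma \ref{algebraiccriterionofmixture}, I would solve the displayed relation for one of the three summands — choosing a summand whose index occurs only once in the pattern, so that after the change of variable $y \mapsto \beta^{-1}(y-\gamma)$ the corresponding polynomial does not reappear on the right-hand side. In the pairwise-distinct case this exhibits, say, $P_3$ in the form $\alpha_1' P_1(\beta_1' y + \gamma_1') + \alpha_2' P_2(\beta_2' y + \gamma_2') + \delta y + \epsilon$; hence such triples lie in the image of a morphism $\C[y]_{\le d_1} \times \C[y]_{\le d_2} \times \C^8 \to \C[y] \times \C[y] \times \C[y]$, whose image therefore has dimension at most $(d_1+1)+(d_2+1)+8 = d_1+d_2+10$, which is $< d_1+d_2+d_3+3 = \dim \prod_{1 \le i \le 3}\C[y]_{\le d_i}$ since $d_3 \ge 8$. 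In the case $i_1 = i_2 \neq i_3$, the same manipulation exhibits $P_{i_3}$ as a ``self-mixture'' $\alpha_1' P_{i_1}(\beta_1' y + \gamma_1') + \alpha_2' P_{i_1}(\beta_2' y + \gamma_2') + \delta y + \epsilon$ while the remaining polynomial $P_c$ (with $\{c\} = \{1,2,3\}\setminus\{i_1,i_3\}$) stays free; such triples lie in the image of a morphism from $\C[y]_{\le d_{i_1}} \times \C^8 \times \C[y]_{\le d_c}$, of dimension at most $d_{i_1}+d_c+10 < d_{i_1}+d_{i_3}+d_c+3$ since $d_{i_3} \ge 8$.

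Finally, the set of non-independent triples is the union of the finitely many constructible sets so obtained, as the pattern and the labelling of $P_1,P_2,P_3$ vary; each is contained in a proper Zariski-closed subset of $\prod_{1 \le i \le 3} \C[y]_{\le d_i}$, hence so is their union, and its complement is the desired dense open set. The only point requiring care is the bookkeeping in the reduction step — always solving for a slot carrying a non-repeated index, and verifying that exactly eight scalar parameters ($3+3$ for the two inner affine changes of variable, plus $2$ for the outer affine term) are involved; once this is arranged the argument is a routine dimension count, so I do not expect a genuine obstacle in this annex, the substantive difficulty of the paper lying in results such as Lemma \ref{lem:tripod}.
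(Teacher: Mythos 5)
Your proposal is correct and follows essentially the same route as the paper: reduce by symmetry to the two index patterns (all distinct, or exactly two equal), solve the defining relation for the summand whose index is not repeated so as to exhibit $P_{i_3}$ as a mixture with $8$ scalar parameters, and conclude by the dimension count $d_1+d_2+10 < d_1+d_2+d_3+3$ (resp.\ $d_{i_1}+9 < d_{i_1}+d_{i_3}+2$), which is exactly where $d_i \geq 8$ is used. The only cosmetic difference is that you carry the unconstrained third polynomial through the parametrizing morphism while the paper suppresses it, which changes nothing in the count.
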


\begin{proof}
By permutations, it is enough to show the  following two
points:

1) If $(P_1,P_2)$ is generic in $\C [y]_{\leq d_1} \times \C
[y]_{\leq d_2}$,
then $(A \cap E) e(P_2) (A \cap E)$ is not a mixture of
$(A \cap E) e(P_1) (A \cap E)$ and $(A \cap E) e(P_1) (A
\cap E)$.

2) If $(P_1,P_2,P_3)$ is generic in
$\C [y]_{\leq d_1} \times \C [y]_{\leq d_2} \times \C
[y]_{\leq d_3}$,
then $(A \cap E) e(P_3) (A \cap E)$ is not a mixture of
$(A \cap E) e(P_1) (A \cap E)$ and $(A \cap E) e(P_2) (A
\cap E)$.

\underline{Proof of 1.}
Define $\phi :  \C [y]_{\leq d_1} \times \C^8  \to \C
[y]_{\leq d_1} \times \C [y]$,
$$\left(  P_1 , (\alpha, \ldots, \theta) \right) \mapsto
(P_1, 
\alpha P_1( \beta y + \gamma) + \delta P_1 ( \epsilon y +
\zeta) + \eta y + \theta).$$
We have $\dim {\rm Im} \, \phi \leq  d_1 +1 + 8 < \dim \C
[y]_{\leq d_1} \times \C [y]_{\leq d_2}$.
If $(P_1, P_2) \in  (\C [y]_{\leq d_1} \times \C [y]_{\leq
d_2}) \setminus {\rm Im} \, \phi$,
it is clear that $(A \cap E) e(P_2) (A \cap E)$ is not a
mixture of
$(A \cap E) e(P_1) (A \cap E)$ and $(A \cap E) e(P_1) (A
\cap E)$.

\underline{Proof of 2.}
Define $\psi :\C [y]_{\leq d_1} \times \C [y]_{\leq
d_2} \times   \C^8 
 \to \C [y]_{\leq d_1} \times \C [y]_{\leq d_2} \times \C
[y]$,
$$\left(  P_1, P_2, (\alpha, \ldots, \theta) \right) \mapsto
(P_1, P_2,
\alpha P_1( \beta y + \gamma) + \delta P_2 ( \epsilon y +
\zeta) + \eta y + \theta).$$
We have $\dim {\rm Im} \, \phi \leq  (d_1 +1) +(d_2+1) +8
< \dim \C [y]_{\leq d_1} \times \C [y]_{\leq d_2} \times \C
[y]_{\leq d_3} $.
If $(P_1, P_2, P_3) \in 
(\C [y]_{\leq d_1} \times \C [y]_{\leq d_2} \times \C
[y]_{\leq d_3}) \setminus {\rm Im} \, \psi$,
it is clear that $(A \cap E) e(P_3) (A \cap E)$ is not a
mixture of
$(A \cap E) e(P_1) (A \cap E)$ and $(A \cap E) e(P_2) (A
\cap E)$.  
\end{proof}

\begin{cor} \label{cor2}
Fix a sequence of integers $d_1,\ldots,d_l \geq 8$.
The generic element  $(P_1, \cdots, P_l)$
of $\prod_{1 \, \leq \, i \, \leq \, l} \C [y]_{\leq \, d_i}$
is an independent sequence.
\end{cor}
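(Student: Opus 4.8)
The plan is to deduce Corollary \ref{cor2} from Proposition \ref{prop:gen2} by the routine principle that a finite intersection of generic conditions is generic. Recall that a nonempty Zariski-open subset of the irreducible affine space $X := \prod_{1 \le m \le l}\C[y]_{\le d_m}$ is automatically dense, and that a finite intersection of such subsets is again a nonempty (dense) open subset; so it will be enough to exhibit, for each member of a suitable \emph{finite} collection of conditions, a dense open subset of $X$ on which that condition holds.

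First I would record the following reformulation of Definition \ref{independentsequence}: a sequence $(P_1,\dots,P_l)$ is independent as soon as, for every $3$-element subset $T \subseteq \{1,\dots,l\}$, the subfamily $(P_t)_{t\in T}$ is independent. Indeed, given indices $i_1,i_2,i_3 \in \{1,\dots,l\}$ which are not all equal, the set $\{i_1,i_2,i_3\}$ has at most $3$ elements, hence (as $l \ge 3$, which holds in all situations of interest, where in fact $l \ge 7$; the cases $l \le 2$ follow directly from part 1 of the proof of Proposition \ref{prop:gen2}) is contained in some $3$-element subset $T$; independence of $(P_t)_{t\in T}$, applied to these very indices, then forces $\deg \sum_{1\le k \le 3}\alpha_k P_{i_k}(\beta_k y + \gamma_k) \ge 2$ for all admissible $\alpha_k,\beta_k,\gamma_k$, which is exactly what is required.

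Now, for each $3$-element subset $T = \{i_1,i_2,i_3\}$, Proposition \ref{prop:gen2} (applicable since each $d_{i_j} \ge 8$) provides a dense open subset $U_T$ of $\prod_{t\in T}\C[y]_{\le d_t}$ on which $(P_t)_{t\in T}$ is independent. Writing $\pi_T : X \to \prod_{t\in T}\C[y]_{\le d_t}$ for the coordinate projection, the set $\pi_T^{-1}(U_T)$ is a nonempty, hence dense, open subset of $X$. There are only finitely many subsets $T$, so $U := \bigcap_T \pi_T^{-1}(U_T)$ is again a dense open subset of $X$, in particular nonempty, and by the reformulation above every tuple $(P_1,\dots,P_l) \in U$ is an independent sequence. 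This is precisely the assertion of Corollary \ref{cor2}.

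I do not anticipate any genuine obstacle: the substantive work has already been carried out in Proposition \ref{prop:gen2}, whose proof treats separately the two relevant index patterns — two of the three indices coinciding (part 1) versus three pairwise distinct indices (part 2) — and in each case confines the ``bad'' tuples to the image of an explicit morphism from an affine space of strictly smaller dimension. The only mild point of care is the bookkeeping in the reformulation above, combined with the symmetry of the degree condition in its three summands (Remark \ref{symmetricform}), which is what makes those two patterns exhaustive.
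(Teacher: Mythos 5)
Your proposal is correct and matches the paper's (implicit) argument: the paper states Corollary \ref{cor2} without proof, treating it as an immediate consequence of Proposition \ref{prop:gen2}, and your write-up simply makes explicit the routine steps — reduction of independence to the $3$-element subfamilies via the symmetry of the degree condition, and the fact that a finite intersection of pullbacks of dense open sets under the coordinate projections is again dense open in the irreducible space $\prod_i \C[y]_{\leq d_i}$. No gaps.
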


Combining Corollaries \ref{cor1} and \ref{cor2}, we get:

\begin{cor} \label{cor3}
Fix a sequence of integers $d_1,\ldots,d_l \geq 8$.
The generic element  $(P_1, \cdots, P_l)$
of $\prod_{1 \, \leq \, i \, \leq \, l} \C [y]_{\leq \, d_i}$
defines a sequence of general and independent polynomials.
\end{cor}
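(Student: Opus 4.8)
The plan is to obtain Corollary \ref{cor3} as a straightforward bookkeeping combination of the two genericity statements already in hand, Corollaries \ref{cor1} and \ref{cor2}. First I would note that the hypothesis $d_i \geq 8$ in particular forces $d_i \geq 5$, so Corollary \ref{cor1} applies and tells us that ``$(P_1,\ldots,P_l)$ is general and the $P_i$ represent pairwise distinct colors'' holds for $(P_1,\ldots,P_l)$ outside a proper Zariski-closed subset $Z_1$ of the affine space $\prod_{1 \leq i \leq l} \C[y]_{\leq d_i}$. Likewise, the condition $d_i \geq 8$ is exactly what Corollary \ref{cor2} requires, so there is a proper Zariski-closed subset $Z_2 \subseteq \prod_{1 \leq i \leq l} \C[y]_{\leq d_i}$ outside of which the sequence $(P_i)$ is independent.

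The one small point to make explicit is that $Z_1 \cup Z_2$ is again a \emph{proper} Zariski-closed subset of $\prod_{1 \leq i \leq l} \C[y]_{\leq d_i}$: this uses only that the ambient space is an affine space over $\C$, hence irreducible, so that a finite union of proper closed subsets is still proper and its complement still dense. For $(P_1,\ldots,P_l)$ in the complement of $Z_1 \cup Z_2$ the tuple is simultaneously general and independent, which is precisely the assertion of the corollary.

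I do not expect any genuine obstacle here; the statement is purely a conjunction of Corollaries \ref{cor1} and \ref{cor2}, and the only care needed is to keep the word ``generic'' uniform throughout — meaning ``holding on the complement of a proper Zariski-closed subset of the irreducible variety $\prod_{1 \leq i \leq l} \C[y]_{\leq d_i}$'' — and to observe that this class of subsets is stable under finite unions. As a cosmetic remark one could add, in the spirit of Remark \ref{C1->C2}, that independence of the $P_i$ already implies that they represent distinct colors, so the ``distinct colors'' clause coming from Corollary \ref{cor1} is in fact subsumed; but it costs nothing to carry it along, and the cleanest proof is simply to intersect the two generic loci.
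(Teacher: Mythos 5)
Your proof is correct and is exactly the paper's argument: the corollary is stated there with the one-line justification ``Combining Corollaries \ref{cor1} and \ref{cor2}'', i.e.\ precisely the intersection of the two generic loci that you carry out. Your extra remark that a finite union of proper Zariski-closed subsets of the irreducible affine space $\prod_{i} \C[y]_{\leq d_i}$ is still proper is the (implicit) content of that combination, so nothing is missing.
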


\begin{rem} \label{rem3}
In other words, if $a_i \in A \setminus E$ and $e_i =e(P_i)$ for $1 \leq i \leq l$,
then the automorphism $a_1 e_1 \ldots a_le_l$ satisfies condition $(C2)$.
\end{rem}

\subsection*{C. Explicit examples}
\label{sec:annexC}

Lemmas \ref{criterionusingarithmeticmean} and \ref{lem:indep} below
will allow us to give explicit examples of polynomials $P_1,\ldots,P_l \in \C[y]$
which are general an independent (see Example \ref{concreteexample}).

\begin{lem} \label{criterionusingarithmeticmean}
Let $P \in \C[y]$ be a polynomial of degree $d \geq 3$
and let ${\dis M= - \frac{p_{d-1}}{dp_d}}$ be the arithmetic mean
of its roots. If there exists two consecutive integers $k
\geq 0$
such that $P ^{(k)}(M) \neq 0$, then $P$ is weakly general.
\end{lem}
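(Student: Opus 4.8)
The plan is to prove the contrapositive: assuming $P$ is \emph{not} weakly general, I will show that no two consecutive derivatives $P^{(j)}(M),P^{(j+1)}(M)$ ($j\ge 0$) are simultaneously nonzero, which is exactly the negation of the hypothesis. By Lemma~\ref{lem:weaklygeneral} there exist $c\in\C$, $R\in\C[y]$, $k\ge 0$ and $n\ge 2$ with $P(y+c)=y^kR(y^n)$.

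The first step is to pin down the translation constant, namely $c=M$. I would compare arithmetic means of roots. On one hand, translating $P$ by $c$ replaces each root $r_i$ of $P$ by $r_i-c$, so the arithmetic mean of the roots of $P(y+c)$ is $M-c$ (equivalently, read off the coefficients of $y^d$ and $y^{d-1}$ in $P(y+c)$, where $d=\deg P\ge 3$). On the other hand, the arithmetic mean of the roots of $y^kR(y^n)$ is $0$: the coefficient of $y^{d-1}$ in $y^kR(y^n)$ vanishes because no exponent of the form $k+ni$ with $0\le i\le \deg R$ can equal $d-1=(k+n\deg R)-1$ when $n\ge 2$. (Concretely: the $k$ roots at the origin contribute nothing, and for each root $\rho$ of $R$ the $n$ solutions of $y^n=\rho$ sum to $0$.) Hence $M-c=0$.

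Now set $Q(y):=P(y+M)=y^kR(y^n)$, so that $Q^{(j)}(0)=P^{(j)}(M)$ for all $j\ge 0$. Writing $R(y)=\sum_i r_iy^i$ gives $Q(y)=\sum_i r_iy^{k+ni}$, so every exponent occurring with nonzero coefficient in $Q$ is congruent to $k$ modulo $n$. If both $P^{(j)}(M)\ne 0$ and $P^{(j+1)}(M)\ne 0$, then $j\equiv j+1\pmod n$, i.e.\ $n\mid 1$, contradicting $n\ge 2$. Thus $P^{(j)}(M)$ and $P^{(j+1)}(M)$ cannot both be nonzero, so $P$ must be weakly general.

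I do not expect a real obstacle here: the only point needing an actual computation is that the arithmetic mean of the roots of $y^kR(y^n)$ is $0$ (together with the bookkeeping that forces $c=M$); once that is in hand, the congruence argument modulo $n$ is immediate.
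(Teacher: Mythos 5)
Your proof is correct. It takes a somewhat different route from the paper's: you argue by contraposition through Lemma~\ref{lem:weaklygeneral}, putting a non-weakly-general $P$ into the normal form $P(y+c)=y^kR(y^n)$ with $n\ge 2$, identifying $c=M$ by comparing arithmetic means of roots (the mean for $y^kR(y^n)$ vanishes since $d-1$ is not congruent to $k$ modulo $n$), and then reading off from the exponents --- all congruent to $k$ modulo $n$ --- that no two consecutive derivatives of $P$ at $M$ can both be nonzero. The paper instead works directly with the functional equation $P(y)=\alpha P(\beta y+\gamma)$: the affine map $f(y)=\beta y+\gamma$ permutes the roots of $P$ and hence fixes their mean $M$, and differentiating $k$ times and evaluating at $M$ yields $(1-\alpha\beta^k)P^{(k)}(M)=0$; two consecutive nonvanishing derivatives then force $\alpha\beta^k=\alpha\beta^{k+1}=1$, hence $\beta=1$, which is criterion (2) of Lemma~\ref{equivalentassertionsofgeneralpolynomials}. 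The two arguments share the key observation that $M$ is fixed by the relevant symmetry; the paper's version is self-contained and shorter (it does not need Lemma~\ref{lem:weaklygeneral}), while yours yields slightly more information, namely that for a non-weakly-general $P$ the indices $j$ with $P^{(j)}(M)\neq 0$ all lie in a single residue class modulo $n$.
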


\begin{proof}
If $P(y)= \alpha P ( \beta y + \gamma)$, then the
automorphism $f$
of the affine line given by $f(y)= \beta y + \gamma$
permutes the roots of $P$.
Since $f$ is affine, we must have $f(M)=M$.
By substituting $M$ for $y$ in the equality
$P^{(k)}(y)=\alpha \beta ^k P^{(k)}(f(y))$,
we get $(1 - \alpha \beta ^k) P^{(k)}(M)= 0$. Whence the
result.  
\end{proof}

\begin{rem} We always have $P^{(d-1)}(M)=0$.
Therefore, if $P$ has degree $2$,
it is not possible to find
two consecutive integers $k$
such that $P ^{(k)}(M) \neq 0$. As a consequence,
it is not possible to show that $P$ is weakly general by using
an analogous version of Lemma  \ref{criterionusingarithmeticmean}.
In fact, it is easy to check that no polynomial of degree $2$
is weakly general!
\end{rem}

\begin{exple} \label{exampleofgeneralpolynomials}
Let $P=\sum_i p_i y^i$ be a polynomial of degree $d \geq 5$.
\begin{enumerate}
\item 
If  $p_{d-1}=0$ and $p_{d-2}p_{d-3} \neq 0$, then $P$ is
general;
\item
If $p_{d-1} \neq 0$ and $p_{d-2}=p_{d-3} = 0$,
then $P$ is general.
\end{enumerate}
\end{exple}

\begin{lem} \label{lem:indep}
A family $(P_i)_i$ of general polynomials
satisfying  
$| \deg \, P_i - \deg \, P_j |> 3$
for any $i \neq j$ is independent.
\end{lem}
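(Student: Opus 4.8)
The plan is to verify the defining implication of Definition \ref{independentsequence} directly. Fix indices $i_1,i_2,i_3\in\{1,\dots,l\}$ and constants $\alpha_k,\beta_k,\gamma_k$ with $\alpha_k\beta_k\neq 0$, and suppose that $\deg\sum_{1\le k\le 3}\alpha_k P_{i_k}(\beta_k y+\gamma_k)\le 1$; the goal is to conclude $i_1=i_2=i_3$. Write $d_k:=\deg P_{i_k}$. Since each $P_{i_k}$ is general it has degree $\ge 5$, and since $\beta_k\neq 0$ each summand $\alpha_k P_{i_k}(\beta_k y+\gamma_k)$ has degree exactly $d_k$. Note also that the sum is symmetric under relabelling the index $k$, so in case-analysis I may choose which two (if any) of the $i_k$ are assumed to coincide.

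First I would dispose, by a pure degree count, of every case in which the three indices are not all equal. If $i_1,i_2,i_3$ are pairwise distinct, then by hypothesis $d_1,d_2,d_3$ are pairwise distinct, the unique summand of maximal degree cannot be cancelled, and the sum has degree $\max_k d_k\ge 5>1$ — a contradiction. If exactly two indices coincide, say $i_1=i_2=i$ and $i_3=j$ with $i\neq j$, then $d_i\neq d_j$ (the gap hypothesis forces distinct degrees); if moreover $d_j>d_i$ the $j$-summand dominates and the total degree is $d_j\ge 5>1$, again a contradiction.

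The remaining — and main — case is $i_1=i_2=i\neq j=i_3$ with $d_i>d_j$; here I must bring in both the generality of $P_i$ and the gap hypothesis. Set $S=\alpha_1 P_i(\beta_1 y+\gamma_1)+\alpha_2 P_i(\beta_2 y+\gamma_2)$ and $T=\alpha_3 P_j(\beta_3 y+\gamma_3)$, so $\deg T=d_j$ and $\deg(S+T)\le 1$. Since $d_j\ge 5>1$, the coefficients of $y^m$ in $S$ and $T$ must cancel for every $m\ge 2$, which forces $\deg S=d_j$; the gap hypothesis $d_i-d_j>3$ (i.e. $d_i-d_j\ge 4$) then gives $\deg S\le d_i-4$. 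Now perform the invertible affine substitution $u=\beta_1 y+\gamma_1$: one rewrites $S$, up to the nonzero factor $\alpha_1$, as $P_i(u)-\alpha P_i(\beta u+\gamma)$ with $\alpha=-\alpha_2/\alpha_1\neq 0$, $\beta=\beta_2/\beta_1\neq 0$ and $\gamma=\gamma_2-\beta_2\gamma_1/\beta_1$, and this substitution preserves degrees. Hence $\deg\bigl(P_i(y)-\alpha P_i(\beta y+\gamma)\bigr)=\deg S\le d_i-4$, so by Definition \ref{def:general} (generality of $P_i$) we get $\alpha=\beta=1$ and $\gamma=0$, whence $S=\alpha_1(P_i-P_i)=0$, contradicting $\deg S=d_j\ge 5$. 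So this configuration is impossible as well, and the only surviving possibility is $i_1=i_2=i_3$, which is exactly the assertion.

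The main obstacle is the bookkeeping in the last case: arranging the affine reparametrization so that the generality condition applies with the precise bound $d_i-4$. This is also where the precise form of the hypothesis matters — the strict inequality ``$>3$'' (equivalently $\ge 4$) is needed to land inside the generality threshold, whereas merely knowing the degrees are distinct would not suffice. Everything else is a routine comparison of degrees.
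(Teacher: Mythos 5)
Your proof is correct and follows essentially the same route as the paper's: both arguments reduce to the observation that a combination $\alpha_1 P_i(\beta_1 y+\gamma_1)+\alpha_2 P_i(\beta_2 y+\gamma_2)$ of a general polynomial $P_i$ is either zero or of degree at least $\deg P_i-3$ (via the affine reparametrization you spell out), and then use the degree gap to rule out cancellation with the third summand. The only difference is presentational — you split the two-coincident-indices case according to which degree is larger, while the paper handles both at once by noting $\deg Q\neq\deg P_{i_3}$ — so there is nothing further to add.
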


\begin{proof}
Let us assume (by contradiction) that
${\dis  \deg \hspace{-1mm} \sum_{1 \,  \leq  \, k \,  \leq 
\, 3 } \hspace{-3mm} \alpha _k P_{i_k} ( \beta_k y +
\gamma_k)
\hspace{2mm} \leq \hspace{2mm} 1}$ and that we do not have
$i_1=i_2=i_3$.\\

\underline{First case.} $i_1,i_2,i_3$ are distinct.

By the assumption, $\deg P_{i_1}, \deg P_{i_2},\deg P_{i_3}$
are distinct,
this is impossible.\\

\underline{Second case.} $i_1,i_2,i_3$ are not distinct.

We may assume that $i_1=i_2 \neq i_3$.

Since $P_{i_1}$ is general, 
for any $\alpha, \beta, \gamma$,
the polynomial  $P_{i_1}(y)- \alpha P_{i_1} (\beta y +
\gamma)$
either has degree $\geq \deg \, P_{i_1} -3$ or is null.
More generally, the same result holds for
${\dis Q(y)=  \sum_{1 \,  \leq  \, k \,  \leq  \, 2 }
\hspace{-3mm} \alpha _k P_{i_1} ( \beta_k y + \gamma_k) }$.
But $| \deg P_{i_3} - \deg P_{i_1} | > 3$ by the assumption,
so that
$\deg Q \neq \deg P_{i_3}$. 
Therefore, we cannot have $\deg (Q +  \alpha _3 P_{i_3} (
\beta_3 y + \gamma_3)) \leq 1$.  
\end{proof}

\begin{exple} \label{concreteexample}
By Example \ref{exampleofgeneralpolynomials}, the polynomial
$y^d + y^{d-1}$ is general for $d \geq 5$.
Therefore, if we set $P_d = y^{4d+1} + y^{4d}$,
the polynomials $P_1, \ldots,P_l$ are general and independent (for any $l$).
As a consequence, if $a_i \in A \setminus E$ and $e_i =e(P_i)$ for $1 \leq i \leq l$,
then $f=a_1 e_1 \ldots a_le_l$ satisfies condition $(C2)$.
If we assume furthermore that $f \in G$ and  $l \geq 7$,
then $<f>_N \neq G$ by Theorem \ref{thm:B}.
\end{exple}

\bibliographystyle{alpha}
\bibliography{biblio}

\end{document}